\documentclass[11pt,a4paper,reqno]{amsart}
\usepackage[margin=1.0in]{geometry}
\usepackage{amsmath,amsthm,amssymb,amsxtra,comment,graphicx,psfrag}
\usepackage{enumerate,enumitem}
\usepackage{bm,mathrsfs}
\usepackage{mathtools}
\usepackage{xspace}
\usepackage{color}
\usepackage{cite}
\usepackage{subcaption}
\usepackage{multirow}
\usepackage{longtable}
\usepackage{color}
\usepackage{hyperref}
\hypersetup{%
  colorlinks = true,
  citecolor=blue, 
  linkcolor  = black
}

\allowdisplaybreaks

\usepackage{titlesec}
\titleformat{\section}{\vskip10pt\large\bfseries}{\thesection.}{0.5em}{\centering\vspace{5pt}}
\titleformat{\subsection}{\vskip10pt\normalsize\bfseries}{\thesubsection.}{0.5em}{}
\newtheorem{theorem}{Theorem}[section]

\newtheorem{lemma}[theorem]{Lemma}

\theoremstyle{definition}

\def\bfx{{\bf x}}
\def\bfe{{\bf e}}

\def\nu{n}

\def\vb{{\bf v}}

\def\d{{\mathrm d}}

\def\R{{\mathbb R}}

\def\ud{\underline{D}}
\def\md{\partial_{t}^\bullet}

\def\mdc{\partial_{\theta}^\bullet} % theta
\def\mda{\partial_{\alpha}^\bullet} % alpha
\def\ehm{{\hat{e}_h^m}}
\def\eM{{e_h^{m+1}}}
\def\eem{{e_h^{m}}}
\def\ehM{{\hat{e}_h^{m+1}}}

\def\Gm{{\Gamma^m}}
\def\GM{{\Gamma^{m+1}}}
\def\Ghm{{\Gamma^m_h}}
\def\GhM{{\Gamma^{m+1}_h}}
\def\Ghsm{{\hat\Gamma^m_{h, *}}}
\def\GhsM{{\hat\Gamma^{m+1}_{h, *}}}

\def\Ghso{{\Gamma_{h,\rm f}^0}}

\def\Hm{{H^m}}

\def\nm{{n^m}}

\def\nhm{{n^m_h}}

\def\nhsm{{\hat n^m_{h, *}}}

\def\nbhsm{{\bar n^m_{h, *}}}

\def\nsm{{n^m_{*}}}

\def\Thsm{{\hat T^m_{h, *}}}

\def\Tbhsm{{\bar T^m_{h, *}}}

\def\Tsm{{T^m_{*}}}
\def\TsM{{T^{m+1}_{*}}}

\def\Nhsm{{\hat N^m_{h, *}}}

\def\Nsm{{N^m_{*}}}
\def\NsM{{N^{m+1}_{*}}}

\def\Nbhsm{{\bar N^m_{h, *}}}

\def\Tbhm{{\bar{T}^m_h}}
\def\Nbhm{{\bar{N}^m_h}}
\def\nbhm{{\bar{n}^m_h}}

\def\kl{{\kappa_l}}

\def\dtXm{{\delta_\tau X_h^m}}
\def\dtem{{\delta_\tau \hat e_h^m}}

\newcommand{\id}{\operatorname{id}}

\numberwithin{equation}{section}

\begin{document}
\title[]{\parbox[b]{\linewidth+20pt}{\centering Super-approximation and optimal $L^2$ convergence of the stabilized BGN method with piecewise linear parametric finite elements for curve-shortening flow}}

\author[]{Genming Bai}
\address{Genming Bai:  Department of Mathematics and Statistics, Old Dominion University, Norfolk, VA 23508. {\it Email address: \tt gbai@odu.edu}}

%\author[]{Buyang Li}
%\address{Buyang Li (corresponding author): 
%Department of Applied Mathematics, The Hong Kong Polytechnic University,
%Hong Kong. {\it Email address: \tt buyang.li@polyu.edu.hk}}

\subjclass[2010]{Primary 65M12; Secondary 65M15, 65M60, 53C44}

\keywords{Curve-shortening flow, stabilized BGN method,
	piecewise linear parametric finite elements, projection error,
	super-approximation, superconvergence, optimal $L^2$ error estimates}

%\subjclass[2010]{65M12, 65M60, 53E10, 53A04, 35R01, 35R35}
%
%\keywords{Mean curvature flow, curve shortening flow, geometric evolution equation, parametric finite element method, tangential motion, stability, convergence, trajectory, mass lumping, distance projection.} 
\allowdisplaybreaks

\maketitle
\vspace{-20pt}

\begin{abstract}\noindent
{\small 
%In this paper, we develop a bihomotopy argument to estimate the discrete time derivative of the linear forms of consistency errors.
%Together with a summation-by-parts trick,
%we discover a directional super-convergence estimate for the consistency error tested with error velocity.
%Based on these new techniques, we are able to prove the super-approximation and the optimal $L^2$ convergence of a stabilized BGN method for curve-shortening flow with linear parametic finite elements.
We develop a bihomotopy argument to estimate the discrete material time
derivatives of consistency functionals.
Combined with summation-by-parts, this argument yields a
directional super-convergence estimate for the consistency contribution
when tested with the normal error velocity.
Together with the super-approximation of the reversely weighted discrete
normal, these techniques establish $L_t^\infty H_x^1$ super-convergence of the projection error and optimal $L_t^\infty L_x^2$ convergence of the trajectory error of a stabilized BGN method for curve-shortening flow with piecewise linear parametric finite elements.
}
\end{abstract} 
%\tableofcontents

\setlength\abovedisplayskip{3.5pt}
\setlength\belowdisplayskip{3.5pt}

\section{Introduction}\label{section:intro}

The tangential motion of mesh points serves as an important source of numerical stability in parametric approximations of geometric flows. Although the graph of geometric evolution is determined by the normal velocity, the distribution of the computational nodes depends on their tangential velocity. The parametric finite element
methods of Barrett, Garcke and N\"urnberg (BGN) \cite{BGN2007JCP,BGN2008JCP,BGN2015a,BGN2016,BGN2020} incorporate a tangential smoothing mechanism into their discrete
weak formulations: The normal motion approximates the prescribed geometric
velocity, while the tangential motion is determined implicitly by the scheme. The underlying mechanism gives favorable mesh redistribution properties, but also presents a difficulty for convergence analysis because of the implicitness of the tangential gauge. 
%, since the
%numerical particle trajectories need not approximate those of a
%parametrization moving only in the normal direction.

%Besides the BGN gauge, there are other tangential gauges which are helpful to maintain the mesh distribution. 
The BGN method can also be viewed as a limiting case of the family of DeTurck-based parametric finite element methods introduced by Elliott and Fritz \cite{EF2017}. Schemes corresponding to other members of this family have been studied and analyzed in \cite{DD1994,BDS2017,DN2026}.
%We also note that the BGN method can be identified as the endpoint case of a more general DeTurck-gauged family of methods proposed by Elliott and Fritz \cite{EF2017}. Other instances of the family of Elliott and Fritz have been considered and analyzed in \cite{DD1994,BDS2017,DN2026}.

Let $\Gamma(t)\subset\mathbb R^2$, $t\in[0,T]$, be a smooth family of closed curves, with unit normal $n$ and curvature $H$ satisfying $Hn=-\Delta_{\Gamma(t)}{\rm id}$. The curve-shortening law prescribes the normal velocity $v\cdot n=-H$.
Parametric finite element approximations for this evolution originate from Dziuk's method \cite{Dziuk1991}. In contrast, BGN-type methods differ from Dziuk's method by retaining the normal component of the mass bilinear form.
%Parametric finite element approximations of this evolution originate
%from Dziuk's method \cite{Dziuk1991}, while the BGN-type methods only differ from Dziuk's method by only keeping the normal part of the mass bilinear form.
%Convergence analyses for semidiscrete and fully discrete versions were
%developed in \cite{Dziuk1994,PS2017,Li2020,YC2021}, respectively.
%Another approach introduces a tangential reparametrization at the
%continuous level, as in the DeTurck-based methods of Elliott and Fritz
%\cite{EF2017}.
%For BGN-type methods, the tangential motion is instead specified through
%the discrete equations, and its stability must be analyzed together with
%the geometric approximation.

%We now describe the numerical method. 
Define $t_m=m\tau$, where $\tau>0$ is the time step, and let $\Ghm$ be a polygonal approximation of
$\Gm=\Gamma(t_m)$. We denote by $S_h(\Ghm)$ the space of continuous
piecewise linear functions on $\Ghm$, by $I_h$ the nodal interpolation
operator, and by a superscript $h$ on an integral the use of mass
lumping defined in Section \ref{subsec:Lagrange-interpolation}. At a node $p$ shared by two adjacent edges, let
$\nhm(p-)$ and $\nhm(p+)$ be their unit normals on two sides, and let
$\ell_{h,-}^m$ and $\ell_{h,+}^m$ be their respective lengths.
The reversely weighted normal proposed recently in \cite{Bai2026} is defined by
\begin{equation}\label{eq:intro-reverse-normal}
	\nbhm(p)
	=
	\frac{
		\ell_{h,+}^m \nhm(p-)
		+
		\ell_{h,-}^m \nhm(p+)
	}{
		\ell_{h,-}^m+\ell_{h,+}^m
	}.
\end{equation}
Thus, the normal of each edge is weighted by the length of the opposite edge. These nodal values define $\nbhm\in[S_h(\Ghm)]^2$.
Its counterpart $\nbhsm$ on the consistency curve satisfies the super-approximation estimate in Lemma~\ref{lemma:n_bar_app}, which plays a central role in the convergence analysis.
%Its consistency counterpart $\nbhsm$ enjoys a crucial super-approximation property (Lemma \ref{lemma:n_bar_app}), which is a key ingredient in the convergence proof.

Given $\Ghm$, the proposed stabilized BGN method determines
$X_h^{m+1}\in[S_h(\Ghm)]^2$ by
%Given $\Ghm$, we propose the following stabilized BGN method. We solve for $X_h^{m+1}\in[S_h(\Ghm)]^2$ from
\begin{align}\label{eq:BGN-stab}
	&\int_{\Ghm}^{h}
	\frac{X_h^{m+1}-{\rm id}}{\tau}
	\cdot\nbhm\,\phi_h\cdot\nbhm
	+
	\int_{\Ghm}
	\nabla_{\Ghm}X_h^{m+1}\cdot
	\nabla_{\Ghm}\phi_h
	\notag\\
	&\qquad=
	\int_{\Ghm}
	\nabla_{\Ghm}{\rm id}\cdot
	\nabla_{\Ghm}
	I_h\!\left[
	\phi_h-(\phi_h\cdot\nbhm)\nbhm
	\right]
	\qquad
	\forall\,\phi_h\in[S_h(\Ghm)]^2,
\end{align}
and sets $\GhM:=X_h^{m+1}(\Ghm)$.
The averaged normal $\nbhm$ is used without normalization in both the mass-lumped term and the stabilization term.
All coefficients are evaluated on the known curve $\Ghm$, so each time step requires the solution of a linear system.

The stabilization on the right-hand side of \eqref{eq:BGN-stab} has the
same form as that originally introduced in \cite{BL2025}.
Its continuous counterpart vanishes, since
\[
\int_{\Gamma}
\nabla_{\Gamma}{\rm id}\cdot
\nabla_{\Gamma}\bigl[\phi-(\phi\cdot n)n\bigr]
=
\int_{\Gamma}
Hn\cdot\bigl[\phi-(\phi\cdot n)n\bigr]
=0.
\]
At the discrete level, it provides the source of the stability for the tangential velocity, compensating the loss for tangential control in the mass bilinear form. Indeed, if $\phi_h\cdot\nbhm=0$ at every node, then
\eqref{eq:BGN-stab} gives
\[
\int_{\Ghm}
\nabla_{\Ghm}
\frac{X_h^{m+1}-{\rm id}}{\tau}
\cdot\nabla_{\Ghm}\phi_h
=0.
\]
This identity underlies the tangential stability estimates in
Section~\ref{sec:tan_stab}.
As shown below, the continuous comparison velocity $v$ is determined
by the elliptic velocity system \eqref{subeq:system},
\[
v\cdot n=-H,
\qquad
-\Delta_\Gamma v=\kappa n.
\]
This is the Euler--Lagrange system for the pointwise constrained minimization
problem
\begin{subequations}
	\label{subeq:var}
	\begin{align}\label{eq:energy_I}
		\min_{v\in\mathcal H}
		\int_\Gamma |\nabla_\Gamma v|^2,
	\end{align}
	where the convex admissible set is
	\begin{align}\label{eq:admissible_set}
		\mathcal H
		=
		\bigl\{
		v\in[H^1(\Gamma)]^2:
		v\cdot n=-H
		\quad\mbox{a.e. on }\Gamma
		\bigr\}.
	\end{align}
\end{subequations}
Thus, the comparison velocity minimizes the Dirichlet energy subject
to the prescribed normal velocity.
This variational characterization explains the tangential smoothing
mechanism associated with the stabilized BGN method
\eqref{eq:BGN-stab}.
%This identity yields the tangential stability estimates in Section \ref{sec:tan_stab}. As we will show later,
%the continuous comparison velocity is the solution $v$ of the elliptic velocity system
%\eqref{subeq:system}, which satisfies both $v\cdot n=-H$ and
%$-\Delta_{\Gamma}v=\kappa n$.
%Indeed, this system is the Euler--Lagrange equation of the functional convex program
%\begin{subequations}\label{subeq:var}
%	\begin{align}\label{eq:energy_I}
%		\min_{v} \int_\Gamma |\nabla_\Gamma v|^2 
%	\end{align}
%	over the convex admissible set
%	\begin{align}\label{eq:admissible_set}
%		\mathcal{H} = \{ v\in H^1(\Gamma): v\cdot n = u\cdot n \quad\mbox{a.e.} \} .
%	\end{align}
%\end{subequations}
%The variational characterization above accounts for the tangential smoothing mechanism
%associated with the stabilized BGN method \eqref{eq:BGN-stab}.

The main purpose of this paper is to systematically develop
super-convergence estimates within the projection-error framework
introduced in \cite{BL2024} and subsequently to establish convergence of the proposed
stabilized BGN method \eqref{eq:BGN-stab} with piecewise linear finite elements (finite element degree $k=1$) for the first time.

Super-convergence estimates are particularly effective in the analysis of lowest-order parametric finite element methods; see, for example,
\cite{PS2017,DN2026}.
Establishing such estimates for the projection error requires additional
care because the nodal closest-point projection introduces geometric
remainders that are absent from the standard trajectory-error analysis.
Once these remainders are controlled, the projection-error framework also yields optimal $L^2$
convergence of the numerical particle trajectories; see
Section~\ref{Proof-THM-2}.
In the present lowest-order stability argument, the use of the projection error is indispensable because the blow-up factor
$h^{-2}\|\nabla_\Ghsm\ehm\|_{L^2(\Ghsm)}$
in Lemma~\ref{lemma:NT_stab} cannot be handled using the ordinary trajectory-error estimates alone.
For convergence analyses of higher-order parametric finite element methods of BGN type, we refer to \cite{BL2025,BGV2026}.

%The main purpose of this paper is to systemically investigate super-convergence estimates under the framework of projection error, originally proposed in \cite{BL2025}, and prove the convergence of a BGN-type method in the regime of lowest finite element degree $k=1$ for the first time. 
%Super-convergence estimates are known to be extremely powerful dealing with lowest-degree parametric finite element methods; cf. \cite{PS2017,DN2026}.
%We should note that the super-convergence results for the projection error are more difficulty to establish because of the presence of the nodal projection. Therefore, our analysis naturally carries over to the ordinary trajectory error analysis. Nevertheless, our approach of the projection error does not lose any generality. We can still recover the optimal $L^2$ convergence rate along the particle trajectory in Theorem {\r???}.
%We also note that, for the lowest finite element degree $k=1$, the use of the projection error is necessary, otherwise keeping track of the trajectory error directly cannot resolve the unstable factor $h^{-2}\| \nabla_\Ghsm \ehm \|_{L^2(\Ghsm)}$ in Lemma \ref{lemma:NT_stab}.
%For the convergence analysis of higher-order parametric finite element methods for the BGN-type methods, we refer the readers to \cite{BL2025,BGV2026}.

We now outline the main super-convergence argument. At time level $m$, the consistency functional satisfies (see Lemma~\ref{proposition-consistency})
\[
|\mathscr D^m(\phi_h)|
\lesssim
\tau\|\phi_h\|_{L^2(\Ghsm)}
+
h^2\|\phi_h\|_{H^1(\Ghsm)} ,
\]
%where $\Ghsm$ is the consistency curve defined in Section \ref{subsec:numerical-and-interpolated-curves}.
%Unfortunately, such $L^2$-suboptimal rate is not sufficient to overcome the blow-up of $h^{-2}\| \nabla_\Ghsm \ehm \|_{L^2(\Ghsm)}$ in Lemma \ref{lemma:NT_stab}.
where $\Ghsm$ is the consistency curve defined in Section~\ref{subsec:numerical-and-interpolated-curves}.
By the inverse inequality, this gives only an $O(\tau+h)$ bound
when the test function is measured in $L^2$.
The resulting loss of one spatial order prevents the velocity estimate
from closing the stability argument involving the factor
$h^{-2}\|\nabla_\Ghsm\ehm\|_{L^2(\Ghsm)}$
in Lemma~\ref{lemma:NT_stab}.

To retain the optimal-order consistency, we test with the normal error velocity
\[
I_h\Nbhsm\dtem
=
I_h\Nbhsm
\Big(
\frac{\eM-\ehm}{\tau}
-
I_h\Tsm v^m
\Big).
\]
Here $\Tsm$ and $\Nsm$ are the exact tangential and normal projectors
pulled back to $\Ghsm$, and $\Nbhsm$ is the orthogonal projector onto
the direction of the reversely weighted normal $\nbhsm$.
Moreover, $\eM$ is the one-step trajectory error relative to the
interpolated exact normal flow, $\ehm$ is the projection error, and
$v^m$ is the continuous comparison velocity.
Their precise definitions are given in
Sections~\ref{subsec:averaged-normal-basic}
and~\ref{section:geometry}.
Functions on different curves are compared through their canonical
nodal identification.

The summation-by-parts argument in Section~\ref{sec:bbd_vel_sup}
gives the decomposition
\begin{align*}
	\mathscr D^m(I_h\Nbhsm\dtem)
	&=
	\frac{
		\tilde{\mathscr D}^{m+1}(\ehM)
		-
		\tilde{\mathscr D}^{m}(\ehm)
	}{\tau}
	+
	\sum_{i=1}^5 D_i^m,
\end{align*}
where $\tilde{\mathscr D}^m$ defined in Section~\ref{sec:cons_err} collects the critical consistency contributions whose estimates involve the $H^1$ test norm.
The first term telescopes after multiplication by $\tau$ and summation
over time.
Two of the remainders require particular attention.

%To carry out the super-convergence argument to the situation where the consistency functional is tested with the normal error velocity $I_h\Nbhsm\dtem := I_h\Nbhsm \Big(
%\frac{\eM-\ehm}{\tau}
%-
%I_h\Tsm v^m
%\Big)$, where $\Tsm$, $\Nsm$ and $\Nbhsm$ are projection operators, $\eM$ is the one-step consistency errror, $\ehm$ is the projection error and $v^m$ is comparison velocity; see Sections \ref{subsec:averaged-normal-basic} and \ref{section:geometry} for their detailed definitions.
%We apply a summation-by-parts argument to make up a telescopic sum up to several remainders (see Section \ref{sec:bbd_vel_sup})
%\begin{align}
%	{\mathscr D}^m(I_h\Nbhsm\dtem )
%	&=
%	\frac{\tilde{\mathscr D}^{m+1}( \hat e_h^{m+1})- \tilde{\mathscr D}^{m} ( \hat e_h^m)}{\tau} 
%	+
%	\sum_{i=1}^5 D_i^m ,
%	\notag
%\end{align}
%where $\tilde{\mathscr D}^m$, defined in Section \ref{sec:cons_err}, is collection of the bad part of ${\mathscr D}^m$.
%Most importantly,
\begin{itemize}
	\item 
%	The term
%	\[
%	D_3^m
%	:=
%	-\frac{
%		\tilde{\mathscr D}^{m+1}(I_h\NsM\ehM)
%		-
%		\tilde{\mathscr D}^{m}(I_h\NsM\ehM)
%	}{\tau}
%	\]
%	involves the discrete material derivative of the consistency
%	functional, with the test function transported through its nodal
%	values.
%	To estimate this derivative, we construct a bihomotopy family of
%	curves in Section~\ref{sec:bbd_vel_Abel}.
%	One parameter connects consecutive time levels, while the other
%	connects the piecewise linear interpolant to its smooth projected
%	parametrization.
%	In Lemma \ref{bhk1:main}, we are able to show that the time derivative will not deteriorate the consistency rate.
	The term
	\[
	D_3^m
	:=
	-\frac{
		\tilde{\mathscr D}^{m+1}(I_h\NsM\ehM)
		-
		\tilde{\mathscr D}^{m}(I_h\NsM\ehM)
	}{\tau}
	\]
	involves the discrete material derivative of the consistency
	functional, evaluated at a test function with the same nodal
	values at both time levels.
	To estimate this derivative, we construct a bihomotopy family
	of curves in Section~\ref{sec:bbd_vel_Abel}.
	One parameter connects consecutive time levels, while the other
	connects the piecewise linear interpolant to its smooth projected
	parametrization.
	Lemma~\ref{bhk1:main} shows that material differentiation preserves
	the optimal order in the consistency estimate.
%	Test functions are transported through this family, so that the
%	changes in the integration domain, the geometric coefficients,
%	and the quadrature weights are included in the differentiation.
%	Lemma~\ref{bhk1:main} gives
%	\[
%	|\delta_\tau\tilde{\mathscr D}^m(\phi_h)|
%	\lesssim
%	h^2
%	\left(
%	1+
%	\|\nabla_\Ghsm I_h\Tsm\dtem\|_{L^2(\Ghsm)}
%	\right)
%	\|\phi_h\|_{H^1(\Ghsm)}.
%	\]
%	Thus, material differentiation preserves the explicit factor $h^2$,
%	with a dependence on the tangential error velocity that is handled
%	together with the tangential stability estimates.
	
%	The term $D_3^m := -
%	\frac{1}{\tau} \Big( \tilde{\mathscr D}^{m+1}(I_h\NsM \hat e_h^{m+1})- \tilde{\mathscr D}^{m}(I_h\NsM \hat e_h^{m+1}) \Big)$ is the discrete time derivative of $\tilde{\mathscr D}^{m}$. To handle this term, we systematically develop a bihomotopy argument in Section \ref{sec:bihomo}. In Lemma \ref{bhk1:main}, we are able to show that the time derivative will not deteriorate the consistency rate.
	
	\item 
	The term
	\[
	D_5^m
	:=
	\tilde{\mathscr D}^m
	\Big(
	I_h\Nsm
	\Big(
	\frac{\eM-\ehM}{\tau}
	-
	I_h\Tsm v^m
	\Big)
	\Big)
	\]
	is a geometric remainder caused by the nodal reprojection in $\ehM$.
	It measures the difference between the corrected one-step error $\eM-\tau I_h\Tsm v^m$ and the consecutive projection error $\ehM$, divided by $\tau$.
	Lemma~\ref{lemma:corrected-displacement-nodal} establishes the
	super-approximation estimate
	\[
	\Big\|
	I_h\Nsm
	\Big(
	\frac{\eM-\ehM}{\tau}
	-
	I_h\Tsm v^m
	\Big)
	\Big\|_{H^1(\Ghsm)}
	\lesssim \tau ,
	\]
	reflecting the geometric rigidity of the above error displacement.
	Lemma~\ref{lemma:corrected-displacement-nodal} is also used to control
	the crucial term $A_3^m$ in Section~\ref{sec:bbd_vel_sup}.
\end{itemize}
%The other remainders $D_1^m$, $D_2^m$ and $D_4^m$ are trivially harmless. So, we can recover the $L^2$-optimality of the consistency error in the following sense:
The remaining terms $D_1^m$, $D_2^m$, and $D_4^m$ can be controlled using the standard estimates. Consequently, the optimal consistency order is retained in the following sense,
\begin{align}
	\Big|{\mathscr D}^m(I_h\Nbhsm\dtem )
	-
	\frac{\tilde{\mathscr D}^{m+1}( \hat e_h^{m+1})- \tilde{\mathscr D}^{m} ( \hat e_h^m)}{\tau} 
	\Big|
	\lesssim
	(\tau+h^2)^2
	+
	\text{stability terms}
	,
	\notag
\end{align}
which can be interpreted as a commutator estimate for exchanging the order of discrete time differentiation and the consistency functional.
%This serves as the main source of super-convergence for the error velocity along the normal direction.
%As a consequence, we are able to show the super-convergence of error under the $L_t^\infty H_x^1$ norm (see Theorem \ref{thm:main}) for the lowest order finite element $k=1$.
After summation in time, the telescoping contribution reduces to
optimal-order endpoint terms.
This structure is the main source of super-convergence for the normal
error velocity and ultimately yields the $L_t^\infty H_x^1$
super-convergence estimate for the projection error with piecewise
linear finite elements; see Theorem~\ref{thm:main}.

The remainder of the paper is organized as follows.
Section~\ref{section:preparation} introduces the notation and geometric
framework and states the main convergence results.
Section~\ref{sec:cons_err} establishes the consistency estimates.
Section~\ref{Proof-THM-1} derives the error equation and estimates the
associated linear and bilinear forms.
Section~\ref{sec:tan_stab0} establishes tangential
stability, basic velocity estimates, and bounds for the errors and
nodal displacements.
Section~\ref{sec:bbd_vel_Abel} develops the bihomotopy argument and
estimates the discrete material derivatives of the consistency
functionals. These estimates are combined with summation by parts
to prove velocity super-convergence and the main error estimates.
The convergence proof is completed by establishing uniform mesh
regularity in Section~\ref{sec:bbd} and trajectory convergence
in Section~\ref{Proof-THM-2}.
The appendices summarize the notation and collect surface calculus
formulas, super-approximation estimates, discrete norm equivalences,
and Poincar\'e inequalities.

\section{Notation and the main result}
\label{section:preparation}

\subsection{Polygonal curves and finite element functions}
\label{subsec:parametric-finite-elements}

We work with polygonal curves in $\R^2$, whose edges form the finite
element mesh $\mathcal T$. Writing
\[
D(\mathcal T):=\bigcup_{K\in\mathcal T}K,
\]
we denote the set of vertices by $\mathcal N(\mathcal T)$ and the mesh
size by $h:=\max_{K\in\mathcal T}\operatorname{diam}K$. Adjacent edges
share their endpoints, so their affine parametrizations agree there.
The resulting curve is a piecewise regular, globally $C^0$ embedding.

Each edge is parametrized by an affine map
$\hat F_K:\hat K\rightarrow K$, where $\hat K=[0,1]$.
With $\mathbb P_1(\hat K)$ denoting the polynomials of degree at most
one, the scalar finite element space is
\[
S_h(\mathcal T)
:=
\bigl\{
v_h\in C^0(D(\mathcal T)):
v_h|_K\circ\hat F_K\in\mathbb P_1(\hat K)
\quad\forall K\in\mathcal T
\bigr\}.
\]
Thus $S_h(\mathcal T)$ consists of continuous functions that are affine
on every edge. We use $[S_h(\mathcal T)]^q$ for $\R^q$-valued functions.
The notation $W_h^{s,p}(D(\mathcal T)), s>0,$ refers to the broken Sobolev
space with elementwise Sobolev--Slobodeckij norms, and $H^s=W^{s,2}$.
The discrete norms $L_h^p$ are defined in Appendix~\ref{sec:disc-norm}.

The different polygonal curves are compared through their nodal values.
Two meshes $\mathcal T_1$ and $\mathcal T_2$ with the same number of
edges are called equivalent, written $\mathcal T_1\cong\mathcal T_2$,
if a $W^{1,\infty}$ homeomorphism
$f_h\in[S_h(\mathcal T_1)]^2$ maps $D(\mathcal T_1)$ onto
$D(\mathcal T_2)$ and maps each edge of $\mathcal T_1$ onto its
corresponding edge of $\mathcal T_2$. The induced identification is
\begin{equation}
	v_h\in S_h(\mathcal T_1)
	\quad\longleftrightarrow\quad
	v_h\circ f_h^{-1}\in S_h(\mathcal T_2).
	\label{eq:nodal-identification}
\end{equation}
Both realizations have the same nodal vector. Throughout the paper,
a finite element function is identified with this common vector;
the domain of an integral or norm specifies its realization.
In particular, the notation $v_h$ may represent the same nodal
vector $\vb$ on different equivalent curves without identifying
their physical domains.

For example, the two integrands of $$\int_{D(\mathcal T_1)} v_h \quad\mbox{and}\quad \int_{D(\mathcal T_2)} v_h$$ have the same vector of nodal values, denoted by $\vb$, but are defined on different domains $D(\mathcal T_1)$ and $D(\mathcal T_2)$. When the underlying domain is specified, $\vb$ is automatically realized to a finite element function $v_h$ on that domain. Since all of the quantitative computations in this paper involve either integrals or norms, our notation for finite element functions will always have a unique and clear meaning. For another example, $\| v_h \|_{L^2(D(\mathcal T_1))}$ and $\| v_h \|_{L^2(D(\mathcal T_2))}$ denote the norms of finite element functions with the same nodal vector on the two different domains $D(\mathcal T_1)$ and $D(\mathcal T_2)$, respectively.

\subsection{Lagrange interpolation and mass lumping}
\label{subsec:Lagrange-interpolation}

Let $I_{\hat K}:C^0(\hat K)\rightarrow\mathbb P_1(\hat K)$ be the
linear interpolant at the endpoints of $\hat K$. On a polygonal
mesh $\mathcal T$, the nodal interpolant
$I_h(\mathcal T):C^0(D(\mathcal T))\rightarrow S_h(\mathcal T)$ is
defined by
\[
\bigl(I_h(\mathcal T)v\bigr)|_K
=
I_{\hat K}[v\circ\hat F_K]\circ\hat F_K^{-1},
\qquad K\in\mathcal T.
\]
Interpolation of vector- and matrix-valued functions is understood
componentwise. Under the identification
\eqref{eq:nodal-identification},
\[
I_h(\mathcal T_2)(v\circ f_h^{-1})
=
\bigl(I_h(\mathcal T_1)v\bigr)\circ f_h^{-1}.
\]
We simply write $I_h$ when the underlying mesh is clear from the context.

The mass-lumped integral is the integral of the Lagrange interpolant.
Since every element is a line segment, it takes the form
\begin{equation}
	\int_{D(\mathcal T)}^h f
	:=
	\int_{D(\mathcal T)}I_hf
	=
	\sum_{K\in\mathcal T}\frac{|K|}{2}
	\sum_{p\in\mathcal N(K)}f(p).
	\notag
%	\label{eq:mass-lumped-inner-product}
\end{equation}

\subsection{The numerical and nodally projected curves}
\label{subsec:numerical-and-interpolated-curves}

Let $\mathcal T_h^0$ be the mesh of the initial polygonal curve
$\Gamma_h^0$. For each edge $K^0$, write $K_{\rm f}^0=K^0$ and retain
the reference-curve notation
\begin{equation}
	\Ghso
	:=
	\bigcup_{K^0\subset\Gamma_h^0}K_{\rm f}^0
	=
	\Gamma_h^0.
	\notag
%	\label{eq:flat-reference-curve}
\end{equation}
Its mesh is denoted by $\mathcal T_{h,{\rm f}}^0$.

At time $t_m=m\tau$, $m=0,\ldots,\lfloor T/\tau\rfloor$, let
$\bfx^m=(x_1^m,\ldots,x_J^m)^\top$ be the nodal vector generated
by \eqref{eq:BGN-stab}. Its realization on the reference curve is
$X_h^m\in[S_h(\Ghso)]^2$, and the numerical mesh and curve are
\begin{equation}
	\mathcal T_h^m
	:=
	\bigl\{
	X_h^m(K_{\rm f}^0):
	K_{\rm f}^0\in\mathcal T_{h,{\rm f}}^0
	\bigr\},
	\qquad
	\Ghm:=D(\mathcal T_h^m)=X_h^m(\Ghso).
	\notag
%	\label{eq:numerical-parametric-curve}
\end{equation}
Whenever $X_h^m$ is regular, elementwise injective and free of
self-intersections, this mesh is equivalent to
$\mathcal T_{h,{\rm f}}^0$. By
\eqref{eq:nodal-identification}, $X_h^m$ realized on $\Ghm$ is
$ {\rm id}_{\Gamma_h^m}$, whereas $X_h^{m+1}$ realized on $\Ghm$
is the local discrete flow map in \eqref{eq:BGN-stab}.

To define the consistency curve, let $\Gm=\Gamma(t_m)$ and let
\[
a^m:U_\delta(\Gm)\longrightarrow\Gm
\]
be the distance retraction from a tubular neighborhood of $\Gm$.
Whenever $\Ghm\subset U_\delta(\Gm)$, project its vertices by setting
\begin{equation}
	\hat{\bf x}_*^m
	:=
	\bigl(a^m(x_1^m),\ldots,a^m(x_J^m)\bigr)^\top.
	\notag
%	\label{eq:projected-nodal-vector}
\end{equation}
The finite element function on $\Ghso$ with this nodal vector is denoted by
$\hat X_{h,*}^m$. Equivalently,
\begin{equation}
	\hat X_{h,*}^m
	=
	I_h(a^m\circ X_h^m)
	\qquad\hbox{on }\Ghso.
	\notag
%	\label{eq:interpolated-parametric-map}
\end{equation}
Its image defines the consistency mesh and curve:
\begin{equation}
	\hat{\mathcal T}_{h,*}^m
	:=
	\bigl\{
	\hat X_{h,*}^m(K_{\rm f}^0):
	K_{\rm f}^0\in\mathcal T_{h,{\rm f}}^0
	\bigr\},
	\qquad
	\Ghsm
	:=
	D(\hat{\mathcal T}_{h,*}^m)
	=
	\hat X_{h,*}^m(\Ghso).
	\notag
%	\label{eq:interpolated-parametric-curve}
\end{equation}

\subsection{Lift and inverse lift}
\label{subsec:lift-and-inverse-lift}

When $\Ghsm$ is sufficiently close to $\Gm$, the restriction
$a^m|_{\Ghsm}:\Ghsm\rightarrow\Gm$ is bijective. We use it to
transfer functions between the polygonal consistency curve and
the exact curve. For a function $v$ on $\Ghsm$, its lift is
\[
v^\ell:=v\circ(a^m|_{\Ghsm})^{-1}.
\]
For a function $f$ on $\Gm$, the inverse lift is
\[
f^{-\ell}:=f\circ a^m|_{\Ghsm} .
\]
The same convention applies to finite element functions identified
with functions on $\Ghsm$ through their nodal vectors.

Every vertex of $\Ghsm$ lies on $\Gm$. Consequently, if
$K=F_K(K_{\rm f}^0)\subset\Ghsm$, where $F_K$ is the affine
parametrization over $K_{\rm f}^0\in\mathcal T_{h,{\rm f}}^0$,
then $a^m\circ F_K$ and $F_K$ agree at the endpoints. Hence
\begin{equation}
	I_{K_{\rm f}^0}[a^m\circ F_K]=F_K,
	\qquad
	I_h(a^m|_K)
	=
	I_{K_{\rm f}^0}[a^m\circ F_K]\circ F_K^{-1}
	=
	{\rm id}|_K
	\quad\hbox{on }K.
	\label{eq:interpolation-of-distance-projection}
\end{equation}

\subsection{Mesh regularity and interpolation error estimates}

We measure the regularity of a polygonal mesh through its
piecewise affine parametrization. Let
$F(\mathcal T)\in[S_h(\mathcal T_0)]^2$ parametrize $\mathcal T$
over a reference mesh $\mathcal T_0$, taken here to be the mesh
of $\Ghso$. Define
\begin{align}\label{P}
	\kappa_*(\mathcal T)
	:=
	\|F(\mathcal T)\|_{W^{1,\infty}(D(\mathcal T_0))}
	+
	\|F(\mathcal T)^{-1}\|_{W^{1,\infty}(D(\mathcal T))}.
\end{align}
The linear interpolation estimate on each reference edge,
together with the chain rule, gives the following result.

\begin{lemma}\label{lemma:Ih}
	For a polygonal mesh $\mathcal T$, let
	$f\in W_h^{2,p}(D(\mathcal T))\cap C^0(D(\mathcal T))$,
	where $1\leq p\leq\infty$. Then
	\begin{align*}
		&\|(1-I_h)f\|_{L^p(D(\mathcal T))}
		+h\|\nabla_{D(\mathcal T)}(1-I_h)f\|_{L^p(D(\mathcal T))}
		\\
		&\qquad\leq
		C_{\kappa_*(\mathcal T)}
		h^2\|f\|_{W_h^{2,p}(D(\mathcal T))}.
	\end{align*}
\end{lemma}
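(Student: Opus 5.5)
The plan is to reduce to the reference edge $\hat K=[0,1]$ and then come back with the chain rule, keeping track of how the constants depend on $\kappa_*(\mathcal T)$. Fix an edge $K\in\mathcal T$ and let $\hat F_K:\hat K\to K$ be its affine parametrization. Because $K$ is a line segment, $\hat F_K(\xi)=p_0+\xi(p_1-p_0)$. Its derivative is the constant vector $p_1-p_0$, of length $|K|\le h$. We write $\hat f:=f\circ\hat F_K$. By definition $(I_hf)\circ\hat F_K=I_{\hat K}\hat f$, so
\[
((1-I_h)f)\circ\hat F_K=(1-I_{\hat K})\hat f .
\]

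On $\hat K$ we use the standard one-dimensional interpolation estimate for $g\in W^{2,p}(\hat K)$:
\[
\|(1-I_{\hat K})g\|_{W^{1,p}(\hat K)}\le C\|g''\|_{L^p(\hat K)} .
\]
This holds because $(1-I_{\hat K})g$ vanishes at both endpoints. Its derivative therefore has zero mean, and Poincaré/Rolle arguments together with $((1-I_{\hat K})g)''=g''$ give the bound with a constant independent of $p$.

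Next we pull back to the physical edge. Arclength on $K$ satisfies $ds=|K|\,d\xi$. The tangential derivative satisfies $\nabla_K w\circ\hat F_K=|K|^{-1}\hat w'\,t_K$, where $t_K$ is the unit tangent. Hence
\[
\|w\|_{L^p(K)}=|K|^{1/p}\|\hat w\|_{L^p(\hat K)},\qquad
\|\nabla_Kw\|_{L^p(K)}=|K|^{1/p-1}\|\hat w'\|_{L^p(\hat K)}.
\]
In the same way, $\hat f''=|K|^2\,\partial_s^2f\circ\hat F_K$, and the second tangential derivative is bounded by the elementwise $W^{2,p}$ norm. So
\[
\|\hat f''\|_{L^p(\hat K)}\le|K|^{2-1/p}\|f\|_{W^{2,p}(K)} .
\]
Combining these bounds gives
\[
\|(1-I_h)f\|_{L^p(K)}+|K|\,\|\nabla_K(1-I_h)f\|_{L^p(K)}\le C|K|^2\|f\|_{W^{2,p}(K)} .
\]
Since $|K|\le h$, we get $|K|^2\le h^2$ for the first term. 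Summing the $p$-th powers over $K$ (or taking the maximum over $K$ when $p=\infty$) then gives the claim, except for the weight in front of the gradient term.

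The main obstacle is this weight. The lemma puts the factor $h$, not $|K|$, in front of the gradient norm. Converting $h\|\nabla(1-I_h)f\|_{L^p(K)}$ into something controlled by $|K|^2$ requires $h\le C|K|$, that is, quasi-uniformity of the mesh. This is exactly where $\kappa_*(\mathcal T)$ enters. Let $K=F(\mathcal T)(K_0)$ with $K_0$ an edge of the reference mesh. Then
\[
|K|\ge\|F(\mathcal T)^{-1}\|_{W^{1,\infty}}^{-1}|K_0|,\qquad h\le\|F(\mathcal T)\|_{W^{1,\infty}}h_0 .
\]
Assuming the reference mesh $\mathcal T_0$ (the initial polygon) is quasi-uniform, with $h_0\le C|K_0|$, we obtain $h\le C_{\kappa_*}|K|$. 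This yields the stated constant $C_{\kappa_*(\mathcal T)}$.

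If one instead interprets $h$ as the local mesh size $h_K=|K|$, this step is unnecessary. Either way, the constant depends only on $\kappa_*$ and on $p$, and in fact it is uniform in $p$.
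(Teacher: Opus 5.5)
Your argument is the paper's own one-line justification: the one-dimensional interpolation estimate on the reference edge, transferred to each physical edge by the affine chain rule. Your elementwise computations are correct and give
\[
\|(1-I_h)f\|_{L^p(K)}\le C|K|^2\|f\|_{W^{2,p}(K)},\qquad
\|\nabla_K(1-I_h)f\|_{L^p(K)}\le C|K|\,\|f\|_{W^{2,p}(K)} .
\]

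The ``main obstacle'' in your last two paragraphs, however, is spurious. The right-hand side of the lemma is $h^2$, not $|K|^2$, so you only need \emph{upper} bounds on $|K|$. Since $|K|\le h$, the second estimate gives at once
\[
h\|\nabla_K(1-I_h)f\|_{L^p(K)}\le Ch|K|\,\|f\|_{W^{2,p}(K)}\le Ch^2\|f\|_{W^{2,p}(K)} .
\]
Summing $p$-th powers over $K$ then finishes the proof. A lower bound $h\le C|K|$ is never needed. You should therefore drop the quasi-uniformity of the reference mesh that you invoke: it is not a hypothesis of the lemma, and the paper imposes quasi-uniformity only later, on the initial mesh. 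As written, your final step proves the lemma under an extra assumption your own computation shows to be unnecessary.

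With $h$ the mesh size of $\mathcal T$ itself, the constant is in fact absolute. A dependence on $\kappa_*(\mathcal T)$ appears only if $h$ is read as the mesh size of the reference mesh $\mathcal T_0$. In that case it enters through $|K|\le\|F(\mathcal T)\|_{W^{1,\infty}}|K_0|$, which is again an upper bound and not the lower bound you derive.
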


For finite element functions identified by their nodal vectors,
pullback by $F(\mathcal T)$ and its inverse also yields
\[
C_{\kappa_*}^{-1}
\|v_h\|_{W^{1,p}(D(\mathcal T_0))}
\leq
\|v_h\|_{W^{1,p}(D(\mathcal T))}
\leq
C_{\kappa_*}
\|v_h\|_{W^{1,p}(D(\mathcal T_0))}.
\]

We define the shape regularity constant of the evolving consistency mesh $\Ghsm$ as
\begin{align}\label{PP}
	\begin{split}
		\kappa_l
		:=&
		\max_{0\leq m\leq l}
		\big(
		\|\hat X_{h,*}^m\|_{W^{1,\infty}(\Ghso)}
		+
		\|(\hat X_{h,*}^m)^{-1}\|_{W^{1,\infty}(\Ghsm)}
		\big)
		\\
		=&
		\max_{0\leq m\leq l}
		\big(
		\max_{K\subset\Ghsm}
		\|F_K\|_{W^{1,\infty}(K_{\rm f}^0)}
		+
		\max_{K\subset\Ghsm}
		\|F_K^{-1}\|_{W^{1,\infty}(K)}
		\big),
	\end{split}
\end{align}
where $0\leq l\leq M=\lfloor T/\tau\rfloor$. This is the
time-dependent counterpart of \eqref{P}. We use $C$ for a generic
positive constant independent of $h$, $\tau$ and $m$, which may
initially depend on $\kappa_l$, $T$ and the smooth exact solution.
The notation $C_{\kappa_l}$ displays the mesh dependence when needed,
whereas $C_0$ is independent of $\kappa_l$. We write
$A\lesssim B$ if $A\leq CB$.

The identity
\eqref{eq:interpolation-of-distance-projection} shows that
$\hat X_{h,*}^m$ is the nodal interpolant of
$a^m\circ\hat X_{h,*}^m$ on $\Ghso$.
Lemma~\ref{lemma:Ih} and the chain rule therefore give
\begin{align*}
	\|a^m\circ\hat X_{h,*}^m-\hat X_{h,*}^m\|_{L^\infty(\Ghso)}
	%	\\
	%	&\qquad
	+h\|a^m\circ\hat X_{h,*}^m-\hat X_{h,*}^m
	\|_{W^{1,\infty}(\Ghso)}
	\leq C_{\kappa_l}h^2.
\end{align*}
Let $n^m$ and $H^m$ be the unit normal and curvature of $\Gm$,
with normal extensions
\[
\nsm:=n^m\circ a^m,
\qquad
H_*^m:=H^m\circ a^m
\]
to $U_\delta(\Gm)$. The edgewise unit outward normal $\nhsm$ of $\Ghsm$
satisfies the corresponding first-order approximation
\begin{align}\label{normal-intpl}
	\|\nhsm-\nsm\|_{L^\infty(\Ghsm)}
	\leq C_{\kappa_l}h.
\end{align}

\subsection{Norm equivalence on nearby polygonal curves}
\label{subsec:norm-equivalence}

Let $\mathcal T_1\cong\mathcal T_2$ with transition map $f_h$.
The intermediate polygonal meshes
\[
\mathcal T_{1,2}^\theta
:=
(1-\theta)\mathcal T_1+\theta\mathcal T_2,
\qquad 0\leq\theta\leq1,
\]
are parametrized over $D(\mathcal T_1)$ by
$f_h^\theta:=(1-\theta){\rm id}_{D(\mathcal T_1)}+\theta f_h$.
A fixed nodal vector determines a finite element function on
each of these meshes. The following comparison of its norms
is proved in \cite[Lemma 4.3]{KLL17} and
\cite[Lemma 7.2]{KLL19}.

\begin{lemma}\label{lemma:norm-equiv}
	Suppose that
	\[
	\|\nabla_{D(\mathcal T_1)}
	(f_h-{\rm id}_{D(\mathcal T_1)})\|_{L^\infty(D(\mathcal T_1))}
	\leq\frac12.
	\]
	Then, for $0\leq\theta\leq1$ and $1\leq p\leq\infty$,
	every finite element function $v_h$ with a common nodal
	vector satisfies
	\begin{align*}
		c^{-1}\|v_h\|_{L^p(D(\mathcal T_1))}
		&\leq
		\|v_h\|_{L^p(D(\mathcal T_{1,2}^\theta))}
		\leq
		c\|v_h\|_{L^p(D(\mathcal T_1))},
		\\
		c^{-1}\|\nabla_{D(\mathcal T_1)}v_h\|_{L^p(D(\mathcal T_1))}
		&\leq
		\|\nabla_{D(\mathcal T_{1,2}^\theta)}v_h
		\|_{L^p(D(\mathcal T_{1,2}^\theta))}
%		\\
%		&
		\leq
		c\|\nabla_{D(\mathcal T_1)}v_h\|_{L^p(D(\mathcal T_1))},
	\end{align*}
	where $c$ is a universal constant independent of
	$\theta$, $p$, $\mathcal T_1$ and $\mathcal T_2$.
\end{lemma}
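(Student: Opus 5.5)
The plan is to reduce everything to a single edge and compute with one-dimensional affine maps. Fix $K\in\mathcal T_1$ with affine parametrization $F_K:\hat K\to K$. By the definition of equivalent meshes, $f_h|_K$ is affine onto the corresponding edge of $\mathcal T_2$. Hence $f_h^\theta|_K=(1-\theta){\rm id}+\theta f_h$ is also affine, and it maps $K$ onto the edge $K^\theta$ of $\mathcal T_{1,2}^\theta$. Since $f_h$ maps edges to edges and shares endpoints, the intermediate mesh is a genuine polygonal mesh with the same connectivity. A finite element function with a fixed nodal vector satisfies $v_h|_{K^\theta}\circ f_h^\theta=v_h|_K$, because both sides are affine on $K$ with the same endpoint values. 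Both $L^p$ norms and gradient norms are sums (or maxima) over edges. It therefore suffices to prove the two-sided bounds edgewise with a universal constant.

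On $K$, let $t_K$ be the unit tangent and $g:=\nabla_{K}f_h=\partial_{t_K}f_h\in\R^2$, which is constant on $K$. Then $\partial_{t_K}f_h^\theta=(1-\theta)t_K+\theta g=t_K+\theta(g-t_K)$. The hypothesis gives $|g-t_K|=|\nabla_{D(\mathcal T_1)}(f_h-{\rm id})|\le\frac12$ on $K$. It follows that the stretching factor $J^\theta:=|\partial_{t_K}f_h^\theta|$ satisfies $\frac12\le J^\theta\le\frac32$. This is the key step: the length ratio $|K^\theta|/|K|=J^\theta$ is bounded above and below uniformly in $\theta$. The only possible obstacle here is degeneracy (folding) of the intermediate edge, and the lower bound $\frac12$ excludes it because the hypothesis is a convex condition along the segment in $\theta$.

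With the Jacobian bounds in hand, the change of variables $x=f_h^\theta(y)$ gives
\[
\int_{K^\theta}|v_h|^p=\int_K|v_h|^pJ^\theta .
\]
This yields the $L^p$ equivalence with constant $c=(3/2)^{1/p}$ or $2^{1/p}$, bounded by $2$ uniformly in $p$; the case $p=\infty$ is trivial since the values are simply transported. For the gradient, the chain rule along the edge gives $|\nabla_{K^\theta}v_h|=|\partial_{t_K}v_h|/J^\theta$ on corresponding points. Combined with the measure factor, this gives
\[
\|\nabla v_h\|_{L^p(K^\theta)}^p=\int_K|\nabla_K v_h|^p(J^\theta)^{1-p},
\]
and $(J^\theta)^{1-p}$ is controlled by powers $2^{p-1}$ and $(3/2)^{p-1}$. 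Taking $p$-th roots gives constants at most $2$, independent of $p$, $\theta$ and the meshes. Summing over edges (or taking the maximum for $p=\infty$) completes the argument, with e.g.\ $c=2$.
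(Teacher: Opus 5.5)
Your proof is correct, but it takes a different and more elementary route than the one the paper relies on. The paper does not prove this lemma; it cites \cite[Lemma 4.3]{KLL17} and \cite[Lemma 7.2]{KLL19}. There the statement is established for general triangulated surfaces by a differential argument in $\theta$. One first controls the tangential gradient of the velocity $f_h-{\rm id}$ on every intermediate surface, uniformly in $\theta$; this is where the threshold $\frac12$ is used. One then differentiates $\|v_h\|_{L^p}^p$ and $\|\nabla v_h\|_{L^p}^p$ in $\theta$, using transport and commutator formulas of the type in Lemma~\ref{lemma:ud}, and concludes with Gronwall's inequality.

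You instead use that, for piecewise linear elements on curves, each edge undergoes a single affine stretch with factor $J^\theta=|t_K+\theta(\partial_{t_K}f_h-t_K)|\in[\tfrac12,\tfrac32]$. The arc-length measure and the tangential derivative therefore transform exactly by $J^\theta$ and $(J^\theta)^{-1}$. This avoids the transport formulas and Gronwall altogether. It also gives the explicit constant $c=2$ and makes the claimed independence of $p$, $\theta$ and the meshes immediate. What the cited approach buys is generality: it works in any dimension and for curved higher-degree elements, where the Jacobian is matrix-valued and non-constant and no such one-line computation is available.

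Two minor points of precision. First, the tangential gradient is the rank-one matrix $\nabla_K f_h=\partial_{t_K}f_h\otimes t_K$, with $\nabla_K{\rm id}=t_K\otimes t_K$. The hypothesis yields $|\partial_{t_K}f_h-t_K|\le\frac12$ through the Euclidean (Frobenius or spectral) norm of this difference, not by literally identifying $\nabla_K f_h$ with a vector. Second, the intermediate polygon need not be globally embedded under this hypothesis. This is harmless, since all the norms are sums or maxima over edges, which is exactly how you use them.
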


%\subsection{Geometry of the consistency curve}
%\label{section:interpolated}

\subsection{Reversely weighted normals and the super-approximation of $\nbhsm$}
\label{subsec:averaged-normal-basic}

At a vertex $p$ of $\Ghsm$, denote the adjacent edges by $K_-$
and $K_+$ and their one-sided traces by $p-$ and $p+$.
Their lengths are written as
\[
\ell_{h,*,\pm}^m
:=
|w_{K_\pm}(p)|\,|K_{\pm,{\rm f}}^0|,
\qquad
w_{K_\pm}
:=
\nabla_{K_{\pm,{\rm f}}^0}F_{K_\pm}\circ F_{K_\pm}^{-1}.
\]
The finite element normal $\nbhsm\in[S_h(\Ghsm)]^2$ is specified
at the vertices by
\begin{align}
	\nbhsm(p)
	=
	\frac{
		\ell_{h,*,+}^m\,\nhsm(p-)
		+\ell_{h,*,-}^m\,\nhsm(p+)
	}{
		\ell_{h,*,-}^m+\ell_{h,*,+}^m
	}.
	\label{bar-n-hat-n}
\end{align}
Each one-sided normal is thus weighted by the length of the
opposite edge. On the numerical curve $\Ghm$, write $\nhm$
for the edgewise unit normal and $\ell_{h,\pm}^m$ for the
adjacent edge lengths. The analogous nodal definition is
\begin{align}
	\nbhm(p)
	=
	\frac{
		\ell_{h,+}^m\,\nhm(p-)
		+\ell_{h,-}^m\,\nhm(p+)
	}{
		\ell_{h,-}^m+\ell_{h,+}^m
	}.
	\label{eq:bar_n}
\end{align}
This is the averaged normal appearing in \eqref{eq:BGN-stab}.

The unit length of the edge normals gives the following nodal
bounds; cf. \cite[Eqs. (3.18) and (4.33)]{BL2024}:
\begin{subequations}
	\begin{align}
		|\nbhsm(p)|&\leq1,
		&
		\big||\nbhsm(p)|-1\big|
		&\leq C|\nhsm(p+)-\nhsm(p-)|^2
		\leq C_{\kappa_l}h^2,
		\label{eq:nhs_bar_len0}
		\\
		|\nbhm(p)|&\leq1,
		&
		\big||\nbhm(p)|-1\big|
		&\leq C|\nhm(p+)-\nhm(p-)|^2.
		\label{eq:nhs_bar_len}
	\end{align}
\end{subequations}

Normal vectors on the numerical and consistency curves are compared
using the intermediate curves of
Section~\ref{subsec:norm-equivalence}. Let $\ehm:=X_h^m - \hat X_{h,*}^m$ be the projection
error; also see the relevant definitions in Section~\ref{section:geometry}.
The fundamental theorem of calculus, Lemma~\ref{lemma:ud},
item~7, Lemma~\ref{lemma:lump}, and
Lemma~\ref{lemma:norm-equiv} yield
\begin{align}\label{normal-intpl-2}
	&\|\nhm-\nhsm\|_{L^p(\Ghsm)}
	+\|\nhm-\nhsm\|_{L_h^p(\Ghsm)}
%	\notag\\
%	&\qquad
	\lesssim
	\|\nabla_\Ghsm\ehm\|_{L^p(\Ghsm)}
	\qquad\forall p\in[1,\infty].
\end{align}
The edgewise constant function $\nhm$ is realized on $\Ghsm$
by the same element correspondence.
Applying this argument to \eqref{bar-n-hat-n} and
\eqref{eq:bar_n}, with the variation of the length weights included,
also gives
\begin{align}\label{normal-intpl-3}
	\|\nbhm-\nbhsm\|_{L^p(\Ghsm)}
	\lesssim
	\|\nabla_\Ghsm\ehm\|_{L^p(\Ghsm)}
	\qquad\forall p\in[1,\infty];
\end{align}
see \cite[Eq. (4.31)]{BL2025}.

We associate normal and tangential projections with these normals:
\[
\Nsm:=\nsm(\nsm)^\top,
\qquad
\Tsm:=I-\Nsm,
\qquad
\Nhsm:=\nhsm(\nhsm)^\top,
\qquad
\Thsm:=I-\Nhsm,
\]
and
\[
\bar N_{h,*}^m
:=\frac{\bar n_{h,*}^m}{|\bar n_{h,*}^m|}
\left(\frac{\bar n_{h,*}^m}{|\bar n_{h,*}^m|}\right)^\top,
\qquad
\bar T_{h,*}^m:=I-\bar N_{h,*}^m, 
\qquad
\bar N_{h}^m
:=\frac{\bar n_{h}^m}{|\bar n_{h}^m|}
\left(\frac{\bar n_{h}^m}{|\bar n_{h}^m|}\right)^\top,
\qquad
\bar T_{h}^m:=I-\bar N_{h}^m
.
\]
%\begin{align*}
%	\Nbhsm
%	&:=
%	\frac{\nbhsm}{|\nbhsm|}
%	\left(\frac{\nbhsm}{|\nbhsm|}\right)^\top,
%	&
%	\Tbhsm&:=I-\Nbhsm,
%	\\
%	\Nbhm
%	&:=
%	\frac{\nbhm}{|\nbhm|}
%	\left(\frac{\nbhm}{|\nbhm|}\right)^\top,
%	&
%	\Tbhm&:=I-\Nbhm.
%\end{align*}
The matrix $I-\nbhm(\nbhm)^\top$ used in
\eqref{eq:BGN-stab} differs from the normalized tangential
projector $\Tbhm$.

%\subsection{Super-approximation of $\nbhsm$}
%\label{subsec:edge-normal-expansion}

As proved in \cite[Lemma 2.3]{Bai2026}, the reversely weighted normals satisfy the following approximation
estimates.

\begin{lemma}\label{lemma:n_bar_app}
	For $0\leq m\leq l$, we have
	\begin{align}
		\|\nbhsm-I_h\nsm\|_{L^\infty(\Ghsm)}
		&\lesssim h^2,
		\notag\\
		\|\nbhm-I_h\nsm\|_{L^2(\Ghsm)}
		&\lesssim
		h^2+\|\nabla_\Ghsm\ehm\|_{L^2(\Ghsm)},
		\notag\\
		\|\nbhsm-\nhsm\|_{L^\infty(\Ghsm)}
		&\lesssim h.
		\label{eq:averaged-normal-approximation}
	\end{align}
\end{lemma}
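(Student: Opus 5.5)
The plan is to prove the first estimate by a nodal computation and obtain the other two from it. Since $\nbhsm-I_h\nsm\in[S_h(\Ghsm)]^2$, its $L^\infty$ norm is attained at a vertex, so it suffices to show $|\nbhsm(p)-\nsm(p)|\lesssim h^2$ at every vertex $p$ of $\Ghsm$; note that $\nsm(p)=n^m(p)$ because $p\in\Gm$.

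Fix a vertex $p$ with neighbouring vertices $p_-$ and $p_+$, all lying on $\Gm$. Let $\gamma$ be the arclength parametrization of $\Gm$ with $\gamma(0)=p$, $\gamma(s_\pm)=p_\pm$ and $s_-<0<s_+$. Because the mesh is shape-regular, $|s_\pm|\simeq\ell_{h,*,\pm}^m\simeq h$, with constants depending on $\kappa_l$. Taylor expansion gives the chord direction
\begin{equation*}
\frac{\gamma(s)-\gamma(0)}{s}=\gamma'(0)+\tfrac{s}{2}\gamma''(0)+O(s^2),\qquad \gamma''(0)=-H^m(p)\,n^m(p)
\end{equation*}
(up to the orientation convention). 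Rotating by $90^\circ$ and normalizing, the edge normals satisfy $\nhsm(p\pm)=n^m(p)+c\,s_\pm H^m(p)\,\gamma'(0)+O(h^2)$ for a fixed constant $c$. The normalization affects only the $O(h^2)$ remainder, since the first-order correction is orthogonal to $n^m(p)$.

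Forming the reversely weighted combination \eqref{bar-n-hat-n}, the first-order tangential terms combine into something proportional to $\ell_{h,*,+}^m s_-+\ell_{h,*,-}^m s_+$. Since the edge length $\ell_{h,*,\pm}^m$ equals $|s_\pm|\bigl(1+O(h^2)\bigr)$ (chord versus arc), and $s_-<0<s_+$, this quantity is $O(h^3)$. Dividing by $\ell_{h,*,-}^m+\ell_{h,*,+}^m\simeq h$ gives $O(h^2)$. This cancellation is exactly the reason for weighting each normal by the length of the opposite edge. The main obstacle is keeping track of the signs and orientation in this step and checking that every remainder is genuinely $O(h^2)$ uniformly, using only the smoothness of $\Gm$ and $\kappa_l$. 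The first estimate then follows.

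For the second estimate, the triangle inequality gives
\begin{equation*}
\|\nbhm-I_h\nsm\|_{L^2(\Ghsm)}\le\|\nbhm-\nbhsm\|_{L^2(\Ghsm)}+\|\nbhsm-I_h\nsm\|_{L^2(\Ghsm)}.
\end{equation*}
The first term is bounded by \eqref{normal-intpl-3} with $p=2$. The second is bounded by the $L^\infty$ estimate just proved, since the length of $\Ghsm$ is bounded. For the third estimate, I will write
\begin{equation*}
\nbhsm-\nhsm=(\nbhsm-I_h\nsm)+(I_h\nsm-\nsm)+(\nsm-\nhsm).
\end{equation*}
The three terms are bounded, respectively, by the first estimate ($O(h^2)$), by Lemma~\ref{lemma:Ih} applied to the smooth function $\nsm$ ($O(h^2)$), and by \eqref{normal-intpl} ($O(h)$). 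Together these give the bound $\lesssim h$.
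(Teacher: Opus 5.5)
Your proof is correct. The paper does not prove this lemma itself; it imports it from \cite[Lemma 2.3]{Bai2026}, so there is no in-text argument to compare with, and your vertex-wise Taylor expansion supplies a self-contained derivation. The individual steps are sound: the reduction to vertices (both $\nbhsm$ and $I_h\nsm$ lie in $[S_h(\Ghsm)]^2$, and $\nsm(p)=n^m(p)$ for $p\in\Gm$), the expansion of the edge normals, and the cancellation of the $O(h)$ tangential terms by the opposite-edge weights. The second and third bounds follow exactly as you describe from \eqref{normal-intpl-3}, Lemma~\ref{lemma:Ih} and \eqref{normal-intpl}.

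A few points can be stated more precisely.
\begin{itemize}
\item The ordering $s_-<0<s_+$ and the bound $|s_\pm|\lesssim\ell_{h,*,\pm}^m$ do not follow from shape regularity alone. They come from the bijectivity of $a^m|_{\Ghsm}$ onto $\Gm$, so that each edge $K_\pm$ is the chord of the short arc $a^m(K_\pm)$, together with the Lipschitz continuity of $a^m$.
\item Write $\ell_{h,*,\pm}^m=|s_\pm|(1+\epsilon_\pm)$ with $|\epsilon_\pm|\lesssim h^2$. Then the first-order coefficient is exactly $\ell_{h,*,+}^m s_-+\ell_{h,*,-}^m s_+=s_+s_-(\epsilon_+-\epsilon_-)$. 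Since $|s_+s_-|/(\ell_{h,*,-}^m+\ell_{h,*,+}^m)\lesssim\min_\pm\ell_{h,*,\pm}^m$, this term contributes only $O(h^3)$, and no lower bound on the edge lengths is needed. The $h^2$ in the lemma comes from the second-order remainders of the individual edge normals.
\item The estimate \eqref{normal-intpl-3} rests on norm equivalence along the intermediate curves $\hat\Gamma_{h,\theta}^m$. That in turn requires $\|\nabla_\Ghsm\ehm\|_{L^\infty(\Ghsm)}$ to be small, which the induction hypothesis \eqref{eq:ind_hypo1} provides. Your second estimate therefore holds in exactly that regime, which is where the lemma is used.
\end{itemize}
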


%\begin{proof}
%	See \cite[Lemma 2.3]{Bai2026}.
%\end{proof}

\subsection{Some geometric identities}
\label{section:geometry}

In the remainder of this section, we will identify all finite element functions as elements either in $S_h(\Ghsm)$ or $[S_h(\Ghsm)]^2$, via the canonical nodal vector identification described in Section \ref{subsec:parametric-finite-elements}.
We define $X^{m+1}:\Gamma^0\rightarrow\Gamma^{m+1}$ and $Y^{m+1}:= X^{m+1}\circ (X^m)^{-1} :\Gamma^m\rightarrow\Gamma^{m+1}$ to be the exact global and local flow maps along $-H(t)n(t)$, respectively.
Let the interface finite element function $X_{h,*}^{m+1}:\Ghsm\rightarrow \Gamma_{h,*}^{m+1}$ be the interpolation of the local flow, which is uniquely determined by the relation $$X_{h,*}^{m+1}(p) - \hat X_{h,*}^m(p) = Y^{m+1}(p) - {\rm id_\Gm}(p)\quad\forall p\in\mathcal N(\Ghsm)\subset\Gm .$$
Then, it follows that 
\begin{align}
	&X_{h,*}^{m+1} - \hat X_{h,*}^m =I_h \big((Y^{m+1} - {\rm id_\Gm}) \circ a^m|_\Ghsm \big) &&\mbox{on}\,\,\, \Ghsm , \label{eq:X-id1} \\
	&Y^{m+1} - {\rm id}_\Gm = \tau ( {{-H^m n^m }}  +  g^m )  &&\mbox{on}\,\,\,  \Gamma^m, \label{eq:X-id2}
\end{align}
where $g^m$ is a smooth correction from the Taylor expansion, satisfying the following $W^{1,\infty}$ estimate: 
\begin{align}\label{W1infty-g}
	\|g^m\|_{W^{1,\infty}(\Gamma^m)}\le C\tau . 
\end{align}

%The projection error at the time level $m$ is defined as $\ehm := {\rm id}|_\Ghm -  \hat X_{h,*}^m$ where $\hat X_{h,*}^m$ is the interface finite element function whose nodal values coincide with the projected nodes of $\Gamma_h^m$ onto $\Gm$ via $\Nsm$.
%Let $\Ghsm$ be the discrete surface whose parametrization map is $\hat X_{h,*}^m$.

The local trajectory error and the projection error at time level $m$ are defined as $e_h^m := X_h^m -  X_{h,*}^m$ and $\ehm := X_h^m -  \hat X_{h,*}^m$, respectively.
According to \cite[Eqs. (3.12)--(3.13)]{BL2024}, we have the following nodal relation
\begin{align}\label{eq:geo_rel_1}
	\ehm = I_h\big[ (e_h^m\cdot \nsm)\nsm \big] + r_h^m ,
\end{align}
with $r_h^m := \ehm - I_h\big[ (e_h^m\cdot \nsm)\nsm \big]$ satisfying
\begin{align}\label{eq:geo_rel_2}
	|r_h^m| \lesssim |[I - \nsm (\nsm)^\top ] e_h^m|^2
	\quad\mbox{at the nodes of $\Ghsm$}.	
\end{align}
$r_h^m$ can be interpreted as a quadratic remainder of the nodal orthogonal projection due to the presence of curvature.

Therefore, we deduce from \eqref{eq:X-id1}, \eqref{eq:X-id2} and \eqref{eq:geo_rel_1} that
\begin{align}\label{eq:geo_rel_3}
	\begin{aligned}
		X_h^{m+1} - X_h^m 
		&= \eM - \ehm + X_{h,*}^{m+1} - \hat X_{h,*}^m \\
		&= \eM - \ehm + \tau I_h\big((-H^m n^m + g^m) \circ a^m|_\Ghsm \big) ,
	\end{aligned}
\end{align}
at the finite element nodes in $\mathcal N(\Ghsm)$.
%This relation helps us convert the numerical displacement $X_h^{m+1} - X_h^m$ to the error displacement $\eM - \ehm$.
Let $(v , \kappa)$ be the unique smooth solution to the following elliptic system on the smooth curve $\Gamma=\Gamma(t)$: 
\begin{subequations}\label{subeq:system}
	\begin{align}
		v\cdot n&=-H \label{eq:ell_sys_v1} , \\
		-\Delta_\Gamma v&=\kappa n  . \label{eq:ell_sys_v2}
	\end{align}
\end{subequations}
Denote $\delta_\tau X_h^m:=\big(X_h^{m+1}-X_h^m-\tau I_h[(v^m) \circ a^m|_\Ghsm ]\big)/\tau$ and $\delta_\tau \ehm:=(\eM-\ehm-\tau I_h\Tsm[(v^m) \circ a^m|_\Ghsm ])/\tau$ with $v^m:=v(t_m)$. Then \eqref{eq:geo_rel_3} becomes
\begin{align}\label{eq:geo_rel_31}
	\delta_\tau X_h^m = \delta_\tau \ehm +  I_h\big(g^m \circ a^m|_\Ghsm \big) .
\end{align}

The following geometric identities related to the projection error are well-known (cf. \cite[Eqs. (A.15)--(A.17)]{BL2024}): 
If we define $\rho_h^m := I_h(\Nsm (\hat X_{h,*}^{m + 1} -\hat X_{h,*}^{m})) - I_h((Y^{m + 1} - {\rm id}_\Gm)\circ a^m|_\Ghsm)\in [S_h(\Ghsm)]^2$, then at all finite element nodes in $\mathcal N(\Ghsm)$, it holds that
\begin{align}
	\Nsm (\hat X_{h,*}^{m + 1} -\hat X_{h,*}^{m})
	&= (Y^{m + 1} - {\rm id}_\Gm)\circ a^m|_\Ghsm + \rho_h^m  ,
	%	&& \mbox{at the nodes} ,
	\label{eq:geo_rel_4}
	\\
	\mbox{where}\,\,\, | \rho_h^m| 
	&\le C_0 \tau^2 + C_0 |T_*^m (\hat X_{h,*}^{m + 1} -\hat X_{h,*}^{m})|^2
	%	&& \mbox{at the nodes}
	, \label{eq:geo_rel_5}\\
	T_*^m (\hat X_{h,*}^{m + 1} -\hat X_{h,*}^{m})
	&= T_*^m (X_{h}^{m + 1} - X_{h}^{m})
	- T_*^m (N_*^{m+1}\circ\hat X_{h,*}^{m+1}-N_*^{m}\circ\hat X_{h,*}^{m})  \ehM
	%	&& \mbox{at the nodes} 
	. \label{eq:geo_rel_6}
\end{align}

The nodal orthogonality of $\hat e_h^m$ to $\Gamma^m$ implies that its tangential component away from the nodes is of higher order. This can be quantified via the super-approximation estimates in Lemma~\ref{lemma:T<=N2}.

\subsection{The main theorem and the induction hypothesis}
\label{subsec:main-result-induction}

For the initial mesh $\mathcal T_h^0$ of $\Ghso$, we assume
shape regularity
\begin{align}\label{P0}
	\kappa_0\leq C_{\rm sh}
\end{align}
and quasi-uniformity: there are constants $c_{\rm q},C_{\rm q}>0$,
independent of $h$, such that
\begin{equation}
	c_{\rm q}h\leq|K|\leq C_{\rm q}h
	\qquad\forall K\in\mathcal T_h^0.
	\label{P1}
\end{equation}
The convergence result is stated in terms of the projection error
and the error velocity defined in Section~\ref{section:geometry}.

\begin{theorem}[Super-convergence of the stabilized BGN method]
	\label{thm:main}
	Let $X:\Gamma^0\times[0,T]\rightarrow\mathbb R^2$ be the
	flow map of the curve-shortening flow. Assume that
	$X(\cdot,t)$ and its inverse
	$X(\cdot,t)^{-1}:\Gamma(t)\rightarrow\Gamma^0$
	are sufficiently smooth, uniformly for $t\in[0,T]$.
	Suppose that the initial discrete curve $\Gamma_h^0$
	satisfies \eqref{P0}--\eqref{P1} and obeys
	\[
	\|\hat e_h^0\|_{L^2(\hat\Gamma_{h,*}^0)}=0 .
	\]
	Let $X_h^m$ be the finite element solution of
	\eqref{eq:BGN-stab}, initialized by
	$X_h^0={\rm id}|_{\Gamma_h^0}$ on $\Gamma_h^0$.
	
	For any fixed constant $0<c_0$, there exists $h_0>0$
	such that
	\[
	\tau\leq c_0h^2,
	\qquad h\leq h_0,
	\]
	implies
	\begin{align}
		\label{eq:err_est_1}
		&\max_{0\leq m\leq\lfloor T/\tau\rfloor}
		\|\hat e_h^m\|_{H^1(\Ghsm)}^2
		+
		\sum\limits_{m=0}^{\lfloor T/\tau\rfloor-1}
		\tau\|\delta_\tau\ehm\|_{L^2(\Ghsm)}^2
		\leq C(\tau+h^2)^2.
	\end{align}
	The constant $C$ is independent of $h$ and $\tau$, but may
	depend on $c_0,c_{\rm q},C_{\rm q},C_{\rm sh},T$
	and the exact solution.
\end{theorem}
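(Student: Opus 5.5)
The proof will be an induction on the time level $l$, carrying two hypotheses up to level $l$. The first is that the mesh regularity constant $\kappa_l$ is bounded by a fixed multiple of $C_{\rm sh}$. The second is the error bound \eqref{eq:err_est_1} with $\lfloor T/\tau\rfloor$ replaced by $l$. From the second hypothesis and $\tau\le c_0h^2$, the inverse inequality gives $\|\nabla_\Ghsm\ehm\|_{L^\infty}\lesssim h^{-1/2}(\tau+h^2)\lesssim h^{3/2}$. Lemma~\ref{lemma:norm-equiv} is then applicable, and $\Ghm$, $\Ghsm$, $\Gm$ stay uniformly close. At step $m$ we subtract the consistency identity from \eqref{eq:BGN-stab}. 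For the exact side we use the interpolated local flow $X_{h,*}^{m+1}$ and the comparison velocity $v^m$ from \eqref{subeq:system}. This produces an error equation for $\delta_\tau X_h^m$, equivalently $\dtem$ via \eqref{eq:geo_rel_31}, of the form
\[
\text{(mass on }\nbhm\text{)}(\delta_\tau X_h^m,\phi_h)+\text{(stiffness)}(\ehM,\phi_h)=\mathscr D^m(\phi_h)+\text{bilinear perturbations}.
\]
The perturbations come from replacing $\nbhm,\Ghm$ by $\nbhsm,\Ghsm$. They are controlled by \eqref{normal-intpl-3}, Lemma~\ref{lemma:n_bar_app} and \eqref{eq:geo_rel_1}--\eqref{eq:geo_rel_2}.

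We split the test into two parts. First we take $\phi_h=I_h\Nbhsm\dtem$, the normal error velocity. The mass term gives $\|I_h\Nbhsm\dtem\|_{L^2_h}^2$. The stiffness term, after writing $\ehM-\ehm=\tau\dtem+\ldots$, gives the telescoping quantity $\frac{1}{2\tau}(\|\nabla\ehM\|^2-\|\nabla\ehm\|^2)$ plus remainders. Here Lemma~\ref{lemma:corrected-displacement-nodal} handles the geometric reprojection displacement. Second, we control the tangential part $I_h\Tsm\dtem$ through the stabilization identity, testing with functions satisfying $\phi_h\cdot\nbhm=0$ at the nodes. This is the tangential stability of Section~\ref{sec:tan_stab}, and it bounds $\|\nabla I_h\Tsm\dtem\|$ by the normal part plus $O(\tau+h^2)$ terms. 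The dangerous factor $h^{-2}\|\nabla_\Ghsm\ehm\|_{L^2}$ of Lemma~\ref{lemma:NT_stab} is absorbed using the induction hypothesis, because $h^{-2}(\tau+h^2)$ is bounded.

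The main obstacle is the consistency term $\mathscr D^m(I_h\Nbhsm\dtem)$. The bound $\tau\|\phi_h\|_{L^2}+h^2\|\phi_h\|_{H^1}$ would, through the inverse inequality, lose a factor of $h$. We would handle it by the summation-by-parts decomposition into $(\tilde{\mathscr D}^{m+1}(\ehM)-\tilde{\mathscr D}^m(\ehm))/\tau+\sum_{i=1}^5D_i^m$, and then bound each piece as follows.

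\begin{enumerate}
\item For $D_3^m$, the material derivative of $\tilde{\mathscr D}$ at a fixed nodal vector, we use the bihomotopy estimate of Lemma~\ref{bhk1:main}. It gives $h^2(1+\|\nabla I_h\Tsm\dtem\|)\|\ehM\|_{H^1}$.
\item For $D_5^m$, we use the $O(\tau)$ $H^1$ bound of Lemma~\ref{lemma:corrected-displacement-nodal}.
\item $D_1^m$, $D_2^m$ and $D_4^m$ are estimated directly from Lemma~\ref{lemma:n_bar_app} and the super-approximation of Lemma~\ref{lemma:T<=N2}.
\end{enumerate}

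Multiplying by $\tau$ and summing over $m\le l$, the telescoping term reduces to the endpoint values $\tilde{\mathscr D}^{l+1}(\hat e_h^{l+1})$ and $\tilde{\mathscr D}^0(\hat e_h^0)$. The latter vanishes because $\hat e_h^0=0$. The former is bounded by $Ch^2\|\hat e_h^{l+1}\|_{H^1}\le \varepsilon\|\hat e_h^{l+1}\|_{H^1}^2+C_\varepsilon h^4$. The $L^2$ part of $\hat e_h^{l+1}$ is controlled by summing the velocity bound, or by a Poincar\'e-type inequality from the appendix.

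A discrete Gronwall argument then yields \eqref{eq:err_est_1} at level $l+1$, with a constant independent of the induction step. Finally, we close the induction on $\kappa_{l+1}$. We combine the $W^{1,\infty}$ smallness of $\hat e_h^{l+1}$ from the inverse inequality with the bound on the tangential nodal motion from Section~\ref{sec:bbd}, which keeps the mesh quasi-uniform. For $h\le h_0$ this recovers the hypothesis with the same constant.
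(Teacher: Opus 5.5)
Your outline follows the paper's architecture: tangential stability from the stabilization term, the test $I_h\Nbhsm\dtem$, the summation-by-parts splitting of $\mathscr D^m$, Lemma~\ref{bhk1:main} for $D_3^m$, Lemma~\ref{lemma:corrected-displacement-nodal} for $D_5^m$ and $A_3^m$, Gr\"onwall, and then the $\kappa_l$ bound. However, it omits a step the argument cannot do without: a preliminary, non-super-convergent \emph{per-step} bound for the new velocity $\delta_\tau\hat e_h^l$.

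Your induction hypothesis says nothing about level $l+1$. The normal test in summation-by-parts form gives no single-step information either. Before summation, the quotient $(\tilde{\mathscr D}^{m+1}(\ehM)-\tilde{\mathscr D}^m(\ehm))/\tau$ is only bounded by $h^2\tau^{-1}(\|\ehM\|_{H^1}+\|\ehm\|_{H^1})$, and it involves the still-uncontrolled $\ehM$. Yet $\hat\Gamma_{h,*}^{l+1}$ is defined only once $\Gamma_h^{l+1}$ is known to lie in the tubular neighbourhood of $\Gamma^{l+1}$. The tools you cite also presuppose $\|\eM\|_{H^1(\Ghsm)}\lesssim\tau+\|\nabla_\Ghsm\ehm\|_{L^2(\Ghsm)}$, $\|\nabla_\Ghsm I_h\Tsm\dtem\|_{L^2(\Ghsm)}\lesssim1$, and the displacement bounds \eqref{eq:X-hat-disp-W1inf}--\eqref{eq:X-hat-disp-W1inf1}. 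Specifically:
Lemma~\ref{bhk1:main} is stated together with \eqref{eq:X-hat-disp-W1inf};
the proof of Lemma~\ref{lemma:corrected-displacement-nodal} closes with \eqref{eq:a-priori-delta-e} and \eqref{hat-ehm-L2}--\eqref{hat-ehm-H1};
the bounds for $A_1^m,A_2^m,A_7^m,D_4^m$ use \eqref{eq:X-hat-disp-W1inf}--\eqref{eq:X-hat-disp-W1inf1};
the norm equivalences between $\Ghsm,\GhsM,\Ghm,\GhM$ rely on them as well.

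The paper supplies all of this first, by testing \eqref{eq:err_eq2} with the full error velocity $\phi_h=\dtem$. The nonnegative terms $\tau\mathscr A_{h,*}^N(\dtem,\dtem)+\tau\mathscr A_{h,*}^T(\dtem,\dtem)$ are dropped, the crude bound of Lemma~\ref{proposition-consistency} is used with inverse estimates, and the tangential part is absorbed via \eqref{eq:tan_stab_e}. This yields \eqref{eq:vel-est-L2N}--\eqref{eq:a-priori-delta-e}, and then the displacement estimates of Section~\ref{sec:disc-est}. This step (or an equivalent per-step estimate using the crude consistency bound) has to precede your summation-by-parts argument.

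Second, your choice of induction hypothesis (the target estimate itself) makes the factor $h^{-2}\|\nabla_\Ghsm\ehm\|_{L^2(\Ghsm)}$ delicate. Saying it is harmless because $h^{-2}(\tau+h^2)$ is bounded gives a bound proportional to the hypothesis constant. Wherever that enters the recovered constant, for instance the velocity part of \eqref{eq:err_est_1} via \eqref{eq:tan_stab_e1}, the induction does not close. The paper instead uses the weaker hypothesis \eqref{eq:ind_hypo1}. Under it this factor is $O(h^{-1/4})$ and always meets a small factor or becomes a quadratic error term, as in $D_2^m$ and in the bound for $\|I_h\Nsm\dtem\|_{L^2(\Ghsm)}$. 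Its $O(1)$ bound is used only after \eqref{eq:err_fin0} holds with a hypothesis-independent constant, namely in \eqref{eq:err_fin1} and Section~\ref{sec:bbd}. You can repair your version the same way.

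Some smaller inaccuracies:
the stiffness difference is $\mathscr A_{h,*}^N(\eM,\cdot)+\mathscr A_{h,*}^T(\eM-\ehm,\cdot)$, not a form in $\ehM$;
the paper telescopes $\|\nabla I_h\NsM\eM\|^2$ and passes to $\ehM$ only at the end, through the last estimate of Lemma~\ref{lemma:corrected-displacement-nodal};
$D_1^m$ rests on the improved bound \eqref{eq:Dm-tildeDm} for nodally normal tests, and $D_4^m$ on \eqref{eq:X-hat-disp-W1inf1}, rather than on Lemmas~\ref{lemma:n_bar_app} and~\ref{lemma:T<=N2}.
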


%\begin{remark}\label{rmk:thm}\upshape
%	The condition $\tau\leq c_0h^2$ enters the proof of uniform
%	mesh regularity in Section~\ref{sec:bbd}.
%\end{remark}

The projection-error estimate also yields optimal $L^2$ convergence
of the particle trajectories associated with the comparison velocity
$v$ in \eqref{subeq:system}; see Section \ref{Proof-THM-2} for the proof.
For each initial node $x_j^0\in\mathcal N(\Gamma_h^0)$,
let $x_{j,\#}(t)$ be the particle trajectory satisfying
\begin{align*}
	\partial_t x_{j,\#}(t)
	&=v\bigl(t,x_{j,\#}(t)\bigr),
	\qquad t\in[0,T],
	\\
	x_{j,\#}(0)&=x_j^0,
	\qquad j=1,\ldots,J.
\end{align*}
Since $v\cdot n=-H$, these trajectories satisfy
$x_{j,\#}(t)\in\Gamma(t)$.
Define $X_{h,\#}^m\in[S_h(\Gamma_h^0)]^2$ by
\[
X_{h,\#}^m(x_j^0)=x_{j,\#}(t_m),
\qquad j=1,\ldots,J,
\]
and define the trajectory error on the initial reference curve by
\[
e_{h,\#}^m:=X_{h,\#}^m-X_h^m.
\]
Then, we have the following convergence result for the trajectory error $e_{h,\#}^m$.
\begin{theorem}[Convergence of the trajectory error]
	\label{thm:trajectory}
	Under the assumptions of Theorem~\ref{thm:main}, the trajectory error converges optimally under the $L_t^\infty L_x^2$ norm:
	\begin{align}
		\max_{0\leq m\leq\lfloor T/\tau\rfloor}
		\|e_{h,\#}^m\|_{L^2(\Ghsm)}
		\leq C(\tau+h^2). \notag
%		\label{eq:trajectory-error}
	\end{align}
	The constant $C$ is independent of $h$ and $\tau$, with the
	same allowed dependencies as in Theorem~\ref{thm:main}.
\end{theorem}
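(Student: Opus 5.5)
The trajectory error will be split through the nodally projected curve and the exact normal-flow interpolant, with each piece estimated separately. At every node we write
\[
e_{h,\#}^m = \bigl(X_{h,\#}^m - \hat X_{h,*}^m\bigr) - \hat e_h^m ,
\]
and realize both terms on $\Ghsm$ via the nodal identification \eqref{eq:nodal-identification}. Theorem~\ref{thm:main} gives $\|\hat e_h^m\|_{L^2(\Ghsm)}\lesssim \tau+h^2$. It therefore suffices to bound $\varepsilon_h^m:=X_{h,\#}^m-\hat X_{h,*}^m$.

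Both $x_{j,\#}(t_m)$ and $a^m(x_j^m)$ lie on $\Gm$, so $\varepsilon_h^m$ is a nodal chord on $\Gm$. Its normal part is quadratic in its tangential part, so it is enough to control the tangential arclength discrepancy. We will track this discrepancy through a discrete evolution.

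\textbf{Step 1: recursion for the nodal displacement.} For each node we subtract the two one-step updates.
\begin{itemize}
\item The exact trajectory satisfies $x_{j,\#}(t_{m+1})-x_{j,\#}(t_m)=\tau v^m(x_{j,\#}(t_m))+O(\tau^2)$.
\item For the projected nodes, \eqref{eq:geo_rel_31} together with \eqref{eq:geo_rel_4}--\eqref{eq:geo_rel_6} gives
\[
\hat X_{h,*}^{m+1}-\hat X_{h,*}^m=\tau I_h[v^m\circ a^m]+\tau\,\delta_\tau\hat e_h^m-(\hat e_h^{m+1}-\hat e_h^m)+O(\tau^2+|\text{quadratic error terms}|),
\]
up to the corrections $g^m$ and $\rho_h^m$, which are $O(\tau^2)$ by \eqref{W1infty-g} and \eqref{eq:geo_rel_5}.
\end{itemize}
Subtracting, we obtain
\[
\varepsilon_h^{m+1}=\varepsilon_h^m+\tau\bigl(v^m(x_{j,\#}(t_m))-v^m(a^m(x_j^m))\bigr)-\tau\,\delta_\tau\hat e_h^m+(\hat e_h^{m+1}-\hat e_h^m)+\tau R^m ,
\]
where $R^m$ collects $O(\tau)$ terms and quadratic terms in the errors. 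A discrete Gronwall argument in the discrete $L_h^2$ norm on the quasi-uniform mesh is used here; by Appendix~\ref{sec:disc-norm} it is equivalent to $L^2(\Ghsm)$. The velocity difference is Lipschitz in $\varepsilon_h^m$ because $v^m$ is smooth.

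\textbf{Step 2: summation.} The term $\hat e_h^{m+1}-\hat e_h^m$ telescopes. It contributes $\hat e_h^{m}-\hat e_h^0$, which is bounded by Theorem~\ref{thm:main}, with $\hat e_h^0=0$. The term $\sum_k\tau\,\delta_\tau\hat e_h^k$ is bounded by Cauchy--Schwarz:
\[
\Bigl\|\sum_k\tau\,\delta_\tau\hat e_h^k\Bigr\|\le T^{1/2}\Bigl(\sum_k\tau\|\delta_\tau\hat e_h^k\|^2\Bigr)^{1/2}\lesssim \tau+h^2 ,
\]
again by \eqref{eq:err_est_1}.

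The quadratic remainders need an $L^\infty$ bound on the errors. We take this from the inverse inequality and the $H^1$ bound in \eqref{eq:err_est_1}, using $\tau\le c_0h^2$: $\|\hat e_h^m\|_{L^\infty}\lesssim h^{-1/2}(\tau+h^2)$, which is small. An induction on $m$ keeps $\|\varepsilon_h^m\|_{L^\infty}$ small as well. Gronwall then yields $\max_m\|\varepsilon_h^m\|_{L_h^2}\lesssim\tau+h^2$ with $\varepsilon_h^0=0$. Finally, the triangle inequality combines this with the bound on $\hat e_h^m$ to give the claimed estimate.

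\textbf{Main obstacle.} The hard step is making the nodal recursion precise enough that no loss of a power of $h$ occurs. In particular, the geometric corrections from the reprojection $a^{m+1}$ versus $a^m$ (the $T_*^m(N_*^{m+1}-N_*^m)\hat e_h^{m+1}$ term in \eqref{eq:geo_rel_6}) and the quadratic remainder $r_h^m$ from \eqref{eq:geo_rel_2} must be $O(\tau)$ times the errors, not merely $O(1)$ times them. I would handle these with the bound $|N_*^{m+1}\circ\hat X_{h,*}^{m+1}-N_*^m\circ\hat X_{h,*}^m|\lesssim\tau+|\hat X_{h,*}^{m+1}-\hat X_{h,*}^m|\lesssim\tau$. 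The nodal-displacement bounds from Section~\ref{sec:tan_stab0} supply the last inequality.
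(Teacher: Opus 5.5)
Your argument is correct and is essentially the paper's. The paper runs the same nodal recursion directly for $e_{h,\#}^m$, using \eqref{eq:geo_rel_31}, Lipschitz continuity of $v^m$ and $\sum_m\tau\|\delta_\tau\hat e_h^m\|_{L^2}^2\lesssim(\tau+h^2)^2$ from Theorem~\ref{thm:main}, followed by a discrete Gr\"onwall step. This is your recursion after substituting $\varepsilon_h^m=e_{h,\#}^m+\hat e_h^m$, so in the paper the telescoping term $\hat e_h^{m+1}-\hat e_h^m$ never appears.

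Note also that $\hat X_{h,*}^{m+1}-\hat X_{h,*}^m=X_h^{m+1}-X_h^m-(\hat e_h^{m+1}-\hat e_h^m)$ holds exactly by definition. Hence no quadratic remainders, $L^\infty$ smallness, or reprojection corrections from \eqref{eq:geo_rel_4}--\eqref{eq:geo_rel_6} enter, and the obstacle you flag does not arise. In your version, telescope in the summed form before taking norms, not inside the one-step inequality.
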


%The proof of Theorem~\ref{thm:trajectory} is given in
%Section~\ref{Proof-THM-2}.

The continuation argument proceeds as follows. Set
$M:=\lfloor T/\tau\rfloor$ and fix $l\in\{0,\ldots,M-1\}$.
For every $m=0,\ldots,l$, assume
\begin{align}
	&\|\ehm\|_{L^2(\Ghsm)}
	+\|e_h^m\|_{L^2(\Ghsm)}
	+\big(
	\|\ehm\|_{L^\infty(\Ghsm)}
	+\|e_h^m\|_{L^\infty(\Ghsm)}
	\big)
	\notag\\
	&\quad
	+\big(
	\|\ehm\|_{H^1(\Ghsm)}
	+\|e_h^m\|_{H^1(\Ghsm)}
	\big)
	+h^{1/2}\big(
	\|\ehm\|_{W^{1,\infty}(\Ghsm)}
	+\|e_h^m\|_{W^{1,\infty}(\Ghsm)}
	\big)
	\leq h^{7/4}.
	\label{eq:ind_hypo1}
\end{align}
Here $e_h^0:=\hat e_h^0$, and $e_h^m$ is realized on $\Ghsm$
through the canonical nodal identification.
Under this hypothesis, we prove \eqref{eq:err_est_1} and recover
\eqref{eq:ind_hypo1} at the next time level $m=l+1$.

In particular, \eqref{eq:ind_hypo1} gives
\[
\|\nabla_\Ghsm\ehm\|_{L^\infty(\Ghsm)}\leq h^{5/4}.
\]
The intermediate curves between $\Ghsm$ and $\Ghm$ are
\[
\hat\Gamma_{h,\theta}^m
:=
({\rm id}|_\Ghsm+\theta\ehm)(\Ghsm)
%=
%(1-\theta)\Ghsm+\theta\Ghm
,
\]
with transition maps
$({\rm id}|_\Ghsm+\theta\ehm)=(1-\theta)\hat X_{h,*}^m+\theta X_h^m=: \hat X_{h,\theta}^m :\Ghsm\rightarrow\hat\Gamma_{h,\theta}^m$.
For sufficiently small $h$, Lemma~\ref{lemma:norm-equiv}
therefore makes the $L^p$ and $W^{1,p}$ norms of functions with
a common nodal vector uniformly equivalent on
$\Ghsm$, $\hat\Gamma_{h,\theta}^m$ and $\Ghm$.

The same induction hypothesis, together with
Lemma~\ref{lemma:n_bar_app}, the smoothness of $n_*^m$ and
inverse estimates, yields
\begin{align}\label{eq:n_bar_bbd}
	\begin{split}
		\|\nbhsm\|_{W^{1,\infty}(\Ghsm)}
		&\lesssim1+h\lesssim1,
		\\
		\|\nbhm\|_{L^\infty(\Ghsm)}
		&\lesssim
		1+h^2+h^{-1/2}\|\nabla_\Ghsm\ehm\|_{L^2(\Ghsm)}
		\lesssim1,
		\\
		\|\nbhm\|_{W^{1,\infty}(\Ghsm)}
		&\lesssim
		1+h+h^{-3/2}\|\nabla_\Ghsm\ehm\|_{L^2(\Ghsm)}
		\lesssim1
%		+h^{1/4}
		.
	\end{split}
\end{align}

\section{Consistency analysis}\label{sec:cons_err}

%We insert the interpolated exact one-step flow into the discrete scheme
%and separate the contributions from time discretization, mass lumping,
%geometric perturbation and stabilization. We follow the decomposition
%in \cite{Bai2026}, with the modification required by the normal mass
%bilinear form.

\subsection{Geometry perturbation estimates}

We recall the standard geometric perturbation estimates; see
\cite[Lemma~3.1]{Bai2026} and \cite[Lemma~5.6]{Kov18}.

\begin{lemma}\label{lemma:geo-pert}
	Given $1/p+1/q=1$, the following estimates hold:
	\begin{align*}
		\Big|\int_{\Ghsm} f_1f_2-\int_{\Gm}f_1^\ell f_2^\ell\Big|
		&\le C_{\kappa_l}h^2
		\|f_1\|_{L^p(\Ghsm)}\|f_2\|_{L^q(\Ghsm)},\\
		\Big|\int_{\Ghsm}\nabla_{\Ghsm}f_1\cdot\nabla_{\Ghsm}f_2
		-\int_{\Gm}\nabla_{\Gm}f_1^\ell\cdot\nabla_{\Gm}f_2^\ell\Big|
		&\le C_{\kappa_l}h^2
		\|\nabla_{\Ghsm}f_1\|_{L^p(\Ghsm)}
		\|\nabla_{\Ghsm}f_2\|_{L^q(\Ghsm)},\\
		\Big|\int_{\Ghsm}\nabla_{\Ghsm}f_1\cdot f_2
		-\int_{\Gm}\nabla_{\Gm}f_1^\ell\cdot f_2^\ell\Big|
		&\le C_{\kappa_l}h
		\|\nabla_{\Ghsm}f_1\|_{L^p(\Ghsm)}
		\|f_2\|_{L^q(\Ghsm)}.
	\end{align*}
\end{lemma}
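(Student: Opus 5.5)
The statement to be proved is Lemma~\ref{lemma:geo-pert}, the standard geometric perturbation estimates between the polygonal consistency curve $\Ghsm$ and the exact curve $\Gm$. My plan is to reduce all three inequalities to elementwise change of variables via the lift $a^m|_{\Ghsm}$, and to use that every vertex of $\Ghsm$ lies on $\Gm$. On each edge $K\subset\Ghsm$ I will write $\d\sigma_{\Gm} = \mu_h\,\d\sigma_{\Ghsm}$, where $\mu_h$ is the Jacobian of $a^m|_K$. I also introduce the tangential-gradient transfer: for $f$ on $\Ghsm$, $\nabla_{\Gm}f^\ell\circ a^m = B_h\nabla_{\Ghsm}f$ with a linear map $B_h$ determined by $\nabla_{\Ghsm}a^m$. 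The three estimates then follow from
\[
\|1-\mu_h\|_{L^\infty(\Ghsm)}\lesssim h^2,\qquad
\|\nabla_{\Ghsm}\cdot\text{-coefficient}\ (\mu_h B_h^\top B_h - P_h)\|_{L^\infty(\Ghsm)}\lesssim h^2,
\]
together with H\"older's inequality, where $P_h$ is the tangential projection of the edge.

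The first step is to compute $\mu_h$. On an edge $K$ with unit tangent $t_h$, I write $a^m(x)=x-d(x)\nsm(x)$, where $d$ is the signed distance. Then
\[
\nabla a^m\, t_h=(I-d\,\mathcal H)\,T_*^m t_h ,
\]
with $\mathcal H$ the Weingarten map, so $\mu_h=|(1-dH_*^m)T_*^m t_h|$. From the interpolation estimate preceding \eqref{normal-intpl}, we have $|d|\le\|a^m\circ\hat X^m_{h,*}-\hat X^m_{h,*}\|_{L^\infty}\lesssim h^2$. By \eqref{normal-intpl}, $|T_*^m t_h|=\sqrt{1-(t_h\cdot\nsm)^2}=1-O(h^2)$. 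Hence $|1-\mu_h|\lesssim h^2$. This gives the first inequality after writing $\int_{\Gm}f_1^\ell f_2^\ell=\int_{\Ghsm}f_1f_2\mu_h$ and applying H\"older.

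The second step handles the gradient terms. In one dimension, $\nabla_{\Gm}f^\ell\circ a^m=\mu_h^{-1}(\partial_{t_h}f)\,\tau^m$, where $\tau^m$ is the unit tangent of $\Gm$. Therefore
\[
\int_{\Gm}\nabla_{\Gm}f_1^\ell\cdot\nabla_{\Gm}f_2^\ell=\int_{\Ghsm}\mu_h^{-1}\nabla_{\Ghsm}f_1\cdot\nabla_{\Ghsm}f_2 .
\]
Since $|1-\mu_h^{-1}|\lesssim h^2$, the second inequality follows. The curve setting keeps this clean: no matrix-valued error appears, only the scalar $\mu_h^{-1}$.

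For the third, mixed inequality, $\nabla_{\Gm}f_1^\ell\cdot f_2^\ell$ equals $\mu_h^{-1}\partial_{t_h}f_1\,(\tau^m\cdot f_2)$ after pullback, while the discrete integrand is $\partial_{t_h}f_1\,(t_h\cdot f_2)$. The difference therefore contains $t_h-\tau^m\circ a^m$, which is only $O(h)$ by \eqref{normal-intpl}. This explains why the rate drops to $h$, and H\"older again closes the estimate.

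The main obstacle is securing the sharp $O(h^2)$ bound on $1-\mu_h$. The delicate point is that the tangent deviation enters quadratically, through $\sqrt{1-(t_h\cdot\nsm)^2}$, while $d$ is $O(h^2)$ in $L^\infty$. Both constants must be controlled only through $\kappa_l$. I would verify this via the interpolation identity \eqref{eq:interpolation-of-distance-projection} and Lemma~\ref{lemma:Ih} applied to $a^m\circ F_K$, which yields $\|d\|_{L^\infty(K)}\le C_{\kappa_l}h^2$ and $|t_h\cdot\nsm|\le C_{\kappa_l}h$.
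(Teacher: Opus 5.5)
Your proposal is correct and is essentially the standard argument behind the results the paper cites without proof (\cite[Lemma~3.1]{Bai2026}, \cite[Lemma~5.6]{Kov18}), specialized to curves: the Jacobian of $a^m|_{\Ghsm}$ satisfies $|1-\mu_h|\lesssim |d|+|t_h\cdot\nsm|^2\lesssim h^2$, the lifted gradient is $\mu_h^{-1}(\partial_{t_h}f)\,\tau^m$, and the mixed term only sees the $O(h)$ mismatch between $t_h$ and $\tau^m\circ a^m$. Two cosmetic points do not affect any bound: the exact tangential stretch factor of $a^m$ is $(1+dH_*^m)^{-1}$ rather than $1-dH_*^m$ (the two differ by $O(h^4)$), and in the mixed term $\mu_h^{-1}$ cancels exactly against the Jacobian, so the difference is precisely $\int_{\Ghsm}\partial_{t_h}f_1\,(t_h-\tau^m\circ a^m)\cdot f_2$.
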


\subsection{Definition of consistency errors}

The consistency error associated with \eqref{eq:BGN-stab} is the linear
functional on $[S_h(\Ghsm)]^2$ defined by
\begin{align}\label{eq:cons_eq}
	\mathscr D^m(\phi_h)
	&:=\int_{\Ghsm}^h
	\frac{X_{h,*}^{m+1}-{\rm id}}{\tau}
	\cdot\nbhsm\,\phi_h\cdot\nbhsm
	+\int_{\Ghsm}\nabla_{\Ghsm}X_{h,*}^{m+1}
	\cdot\nabla_{\Ghsm}\phi_h
	\notag\\
	&\quad-\int_{\Ghsm}\nabla_{\Ghsm}\hat X_{h,*}^m
	\cdot\nabla_{\Ghsm}I_h[(I-\nbhsm(\nbhsm)^\top)\phi_h]
	\notag\\
	&=\int_{\Ghsm}^h
	\frac{X_{h,*}^{m+1}-{\rm id}}{\tau}
	\cdot\nbhsm\,\phi_h\cdot\nbhsm
	+\int_{\Gm}\Hm\nm\cdot\phi_h^\ell
	\notag\\
	&\quad-\int_{\Gm}\nabla_{\Gm}{\rm id}
	\cdot\nabla_{\Gm}\phi_h^\ell
	+\int_{\Ghsm}\nabla_{\Ghsm}X_{h,*}^{m+1}
	\cdot\nabla_{\Ghsm}\phi_h
	\notag\\
	&\quad-\int_{\Ghsm}\nabla_{\Ghsm}\hat X_{h,*}^m
	\cdot\nabla_{\Ghsm}I_h[(I-\nbhsm(\nbhsm)^\top)\phi_h]
	\notag\\
	&=:\mathscr D_1^m(\phi_h)+\mathscr D_2^m(\phi_h)
	+\mathscr D_3^m(\phi_h),
\end{align}
where we have used
\[
\int_{\Gm}\nabla_{\Gm}{\rm id}\cdot\nabla_{\Gm}\phi_h^\ell
=\int_{\Gm}\Hm\nm\cdot\phi_h^\ell.
\]

\subsection{Estimates for consistency errors}

\begin{lemma}\label{proposition-consistency}
	The consistency error $\mathscr D^m(\cdot)$ satisfies
	\begin{align*}
		|\mathscr D^m(\phi_h)|
		&\lesssim
		\tau\|\phi_h\|_{L^2(\Ghsm)}
		+h^2\|\phi_h\|_{H^1(\Ghsm)}
		\quad\forall\,\phi_h\in[S_h(\Ghsm)]^2.
	\end{align*}
\end{lemma}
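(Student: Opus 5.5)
The plan is to estimate the three pieces $\mathscr D_1^m,\mathscr D_2^m,\mathscr D_3^m$ from \eqref{eq:cons_eq} separately. The key point is that the normal-velocity contribution in $\mathscr D_1^m$ is cancelled by $\int_{\Gm}\Hm\nm\cdot\phi_h^\ell$ up to $O(h^2)$ errors. This cancellation relies on the super-approximation of $\nbhsm$ in Lemma~\ref{lemma:n_bar_app}.

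\textbf{Step 1 (time discretization and mass lumping in $\mathscr D_1^m$).} By \eqref{eq:X-id1}--\eqref{W1infty-g}, at the nodes we have $(X_{h,*}^{m+1}-{\rm id})/\tau=(-\Hm\nm+g^m)\circ a^m$. The $g^m$ part contributes at most $\tau\|\phi_h\|_{L^2}$, by the discrete norm equivalence for mass lumping and $|\nbhsm|\le1$. For the main part, $\mathscr D_1^m$ is a mass-lumped integral, so only nodal values enter. There I replace $\nbhsm$ by $\nsm$: at the nodes, $|\nbhsm-\nsm|\lesssim h^2$ by Lemma~\ref{lemma:n_bar_app}, since $I_h\nsm=\nsm$ there. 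Using $\nsm\cdot\nsm=1$, the nodal integrand becomes $-H_*^m\,\nsm\cdot\phi_h$ up to $O(h^2|\phi_h|)$. This gives $-\int^h_{\Ghsm}H_*^m\nsm\cdot\phi_h+O(h^2\|\phi_h\|_{L^2})$.

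\textbf{Step 2 (matching with the continuous term).} Next I compare $-\int^h_{\Ghsm}H_*^m\nsm\cdot\phi_h$ with $-\int_{\Ghsm}H_*^m\nsm\cdot\phi_h$. The lumping error of $\int^h f\phi_h-\int f\phi_h$ for smooth $f$ and piecewise linear $\phi_h$ is bounded by $h^2\|\phi_h\|_{H^1}$, via the standard elementwise argument (Lemma~\ref{lemma:Ih} applied to $f\phi_h$, using $\nabla^2\phi_h=0$ on each edge). Lemma~\ref{lemma:geo-pert}, first line, then lifts this to $-\int_{\Gm}\Hm\nm\cdot\phi_h^\ell$ with an $O(h^2\|\phi_h\|_{L^2})$ error. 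This cancels the second term of the second form of $\mathscr D^m$.

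\textbf{Step 3 (gradient terms $\mathscr D_2^m+\mathscr D_3^m$).} I write $\nabla X_{h,*}^{m+1}=\nabla\hat X_{h,*}^m+\tau\nabla I_h[(-\Hm\nm+g^m)\circ a^m]$. The $\tau$-part, paired with $\nabla\phi_h$, is integrated by parts elementwise. Alternatively, I bound it using $\tau\lesssim h^2$-type reasoning, but that condition is not available here. The cleanest route is to compare with $\int_{\Gm}\tau\nabla(-\Hm\nm)\cdot\nabla\phi_h^\ell$ via Lemma~\ref{lemma:geo-pert}, then integrate by parts on the smooth curve to get $\tau\|\phi_h\|_{L^2}$. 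The interpolation error $\tau h\|\nabla\phi_h\|$ is harmless. The remaining term is
\[
\int_{\Ghsm}\nabla\hat X_{h,*}^m\cdot\nabla\phi_h-\int_{\Gm}\nabla{\rm id}\cdot\nabla\phi_h^\ell-\int_{\Ghsm}\nabla\hat X_{h,*}^m\cdot\nabla I_h[(I-\nbhsm(\nbhsm)^\top)\phi_h].
\]
Since $\hat X_{h,*}^m={\rm id}$ on $\Ghsm$, the first two terms differ by $O(h^2\|\phi_h\|_{H^1})$ by Lemma~\ref{lemma:geo-pert}. For the stabilization term, on each edge $\nabla_{\Ghsm}{\rm id}$ is the tangent projector $\Thsm$, so it equals $\int_{\Ghsm}\Thsm:\nabla I_h[\psi_h]$ with $\psi_h=I_h[(\phi_h\cdot\nbhsm)\nbhsm]$ removed. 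This is a discrete curvature functional: summing by parts over edges gives the nodal jumps of the tangents, $\sum_p(t(p+)-t(p-))\cdot\psi_h(p)$, which is a sum of normal-direction vectors.

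\textbf{Main obstacle.} The hard step is showing that the stabilization term is $O(h^2\|\phi_h\|_{H^1})$. The tangent jump at $p$ is parallel to $\nhsm(p+)+\nhsm(p-)$. The reversely weighted $\nbhsm(p)$ differs from this direction by $O(h^2)$, and only super-closeness makes $[(I-\nbhsm\nbhsm^\top)\phi_h](p)\cdot(t(p+)-t(p-))$ of size $O(h\cdot h^2)|\phi_h(p)|$. The jump itself is $O(h)$, and the misalignment has to be $O(h^2)$ relative to the jump direction; using only $\nhsm$-closeness would give $O(h)$ and lose the estimate. Summing $O(h^3)|\phi_h(p)|$ over $O(h^{-1})$ nodes gives $h^2\|\phi_h\|_{L^2}$ by discrete norm equivalence. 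I would carry out this local expansion explicitly using the definition \eqref{bar-n-hat-n} together with a Taylor expansion of $\Gm$ at the node, and check that the length weights exactly produce the needed cancellation. If this direct route fails, the fallback is to use the same cancellation in the first identity of \eqref{eq:cons_eq} directly.
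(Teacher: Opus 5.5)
\textbf{Where the proposal agrees with the paper.} Your Steps 1--2 are the paper's five-term split $\mathscr D_{11}^m,\dots,\mathscr D_{15}^m$ of the mass term. Your handling of the $\tau$-part of the stiffness term matches $\mathscr D_{21}^m$.

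\textbf{The gap: the stabilization term.} Your key geometric claim is false on non-uniform meshes. The tangent jump $\hat\mu_{h,*}^m(p+)+\hat\mu_{h,*}^m(p-)$ is parallel to the \emph{unweighted} bisector $\nhsm(p-)+\nhsm(p+)$, whereas
\begin{align*}
\nbhsm(p)-\tfrac12\big(\nhsm(p-)+\nhsm(p+)\big)
=\frac{(\ell_{h,*,+}^m-\ell_{h,*,-}^m)\big(\nhsm(p-)-\nhsm(p+)\big)}{2(\ell_{h,*,-}^m+\ell_{h,*,+}^m)} .
\end{align*}
This difference is $O(h)$ and nearly tangential whenever adjacent edges differ in length by $O(h)$, which \eqref{P1} allows. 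Lemma~\ref{lemma:n_bar_app} says the reverse weighting aligns $\nbhsm(p)$ with the exact normal $\nsm(p)$ to $O(h^2)$, and therefore \emph{not} with the bisector. A Taylor expansion on $\Gm$ shows that, up to orientation, the tangential component of the jump is $\frac18(\Hm)^2\big((\ell_{h,*,+}^m)^2-(\ell_{h,*,-}^m)^2\big)+O(h^3)$. Hence a nodally tangential $\phi_h$ picks up $O(h^2)|\phi_h(p)|$ per node, not $O(h^3)|\phi_h(p)|$. Take a circle with edge lengths alternating between $h$ and $2h$, and $\phi_h(p)$ tangent to $\Gm$ with sign alternating in step with the mesh. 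Then the stabilization term is of exact order $h\|\phi_h\|_{L^2(\Ghsm)}$. So the bound $h^2\|\phi_h\|_{L^2}$ you aim for is false, and the local check you propose cannot close. Your fallback meets the same obstruction: any purely nodal comparison loses one order on non-uniform meshes.

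\textbf{What is needed.} The missing ingredient is a non-local cancellation, which is exactly where the $H^1$ norm in the lemma comes from. The paper proceeds as follows.
\begin{itemize}
\item Since $\hat X_{h,*}^m=I_ha^m$ and $\nabla_{\Ghsm}\psi_h$ is edgewise constant, $\int_{\Ghsm}\nabla_{\Ghsm}(1-I_h)a^m\cdot\nabla_{\Ghsm}\psi_h=0$; this is $\mathscr D_{33}^m$.
\item Lemma~\ref{lemma:geo-pert} then moves the integral to $\Gm$ ($\mathscr D_{32}^m$), and integration by parts gives $\int_{\Gm}\Hm\nm\cdot\psi_h^\ell$.
\item Split $I-\nbhsm(\nbhsm)^\top$ into $(\Tbhsm-\Tsm)+\Tsm$ plus the normalization defect. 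The outer pieces are $O(h^2)$ at the nodes by Lemma~\ref{lemma:n_bar_app} and \eqref{eq:nhs_bar_len0}.
\item For the middle piece, $\nm\cdot(\Tsm\phi_h)^\ell=0$ exactly. What remains is $\int_{\Gm}\Hm\nm\cdot[(1-I_h)(\Tsm\phi_h)]^\ell=O(h^2\|\phi_h\|_{H^1})$ by Lemma~\ref{lemma:Ih}.
\end{itemize}
In your nodal picture, the equivalent repair has two parts. First, split the jump into its $\nsm(p)$-component, which does give $O(h^3)|\phi_h(p)|$, and the tangential part above. Second, sum by parts over the nodes, using that $\ell_{h,*,+}^m$ at $p$ equals $\ell_{h,*,-}^m$ at the next node. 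This yields $h^2\|\phi_h\|_{W^{1,1}(\Ghsm)}$.

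\textbf{A smaller point in Step 3.} Lemma~\ref{lemma:geo-pert} with $f_1={\rm id}_{\Ghsm}$ compares with $\int_{\Gm}\nabla_{\Gm}({\rm id}_{\Ghsm})^\ell\cdot\nabla_{\Gm}\phi_h^\ell$, not with $\int_{\Gm}\nabla_{\Gm}{\rm id}_{\Gm}\cdot\nabla_{\Gm}\phi_h^\ell$. The difference $({\rm id}_{\Ghsm})^\ell-{\rm id}_{\Gm}$ is only $O(h)$ in $W^{1,\infty}$. You need either the super-approximation of Lemma~\ref{Lemma-GLW} (the paper's $\mathscr D_{23}^m$), or the same edgewise identity replacing $I_ha^m$ by $a^m$ before applying Lemma~\ref{lemma:geo-pert}.
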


\begin{proof}
	Using \eqref{eq:X-id1}--\eqref{eq:X-id2}, we decompose the mass
	contribution into five terms:
	\begin{align*}
		\mathscr D_1^m(\phi_h)
		&=\int_{\Ghsm}^h
		\frac{X_{h,*}^{m+1}-{\rm id}}{\tau}
		\cdot\nbhsm\,\phi_h\cdot\nbhsm
		+\int_{\Gm}\Hm\nm\cdot\phi_h^\ell\\
		&=\int_{\Ghsm}^h I_h(g^m)^{-\ell}
		\cdot\nbhsm\,\phi_h\cdot\nbhsm\\
		&\quad-\int_{\Ghsm}^h
		\big(I_h(H^mn^m)^{-\ell}-(H^mn^m)^{-\ell}\big)
		\cdot\phi_h\\
		&\quad-\Big(\int_{\Ghsm}^h-\int_{\Ghsm}\Big)
		(H^mn^m)^{-\ell}\cdot\phi_h\\
		&\quad-\int_{\Ghsm}(H^mn^m)^{-\ell}\cdot\phi_h
		+\int_{\Gm}H^mn^m\cdot\phi_h^\ell\\
		&\quad+\int_{\Ghsm}^h
		(H^mn^m)^{-\ell}\cdot
		[I-\nbhsm(\nbhsm)^\top]\phi_h\\
		&=:\sum_{i=1}^5\mathscr D_{1i}^m(\phi_h).
	\end{align*}
	Here
	$\big(\int_{\Ghsm}^h-\int_{\Ghsm}\big)f
	=\int_{\Ghsm}^h f-\int_{\Ghsm}f$
	denotes the quadrature error.
	
	By \eqref{W1infty-g}, the boundedness of $\nbhsm$ and discrete
	norm equivalence,
	\begin{align*}
		|\mathscr D_{11}^m(\phi_h)|
		&\lesssim
		\|g^m\|_{L^\infty(\Gm)}
		\|\phi_h\|_{L^2(\Ghsm)}
		\lesssim\tau\|\phi_h\|_{L^2(\Ghsm)}.
	\end{align*}
	The integrand defining $\mathscr D_{12}^m$ vanishes at every
	quadrature node, and hence
	\[
	\mathscr D_{12}^m(\phi_h)=0.
	\]
	The quadrature estimate in Lemma~\ref{lemma:super_conv2}, together
	with the piecewise linearity of $\phi_h$, gives
	\begin{align*}
		|\mathscr D_{13}^m(\phi_h)|
		\lesssim
		h^2\|(H^mn^m)^{-\ell}\|_{H_h^2(\Ghsm)}
		\|\phi_h\|_{H^1(\Ghsm)}
%		\\
%		&
		\lesssim h^2\|\phi_h\|_{H^1(\Ghsm)}.
	\end{align*}
	Lemma~\ref{lemma:geo-pert} and norm equivalence yield
	\begin{align*}
		|\mathscr D_{14}^m(\phi_h)|
		&\lesssim
		h^2\|(H^mn^m)^{-\ell}\|_{L^2(\Ghsm)}
		\|\phi_h\|_{L^2(\Ghsm)}
		\lesssim h^2\|\phi_h\|_{L^2(\Ghsm)}.
	\end{align*}
	For the last contribution, the super-approximation
	in Lemma~\ref{lemma:n_bar_app} implies
	\[
	\big|[I-\nbhsm(\nbhsm)^\top]\nsm\big|
	\lesssim|\nbhsm-\nsm|
	\lesssim h^2
	\quad\mbox{at the nodes of $\Ghsm$},
	\]
	and, subsequently,
	\begin{align}
		|\mathscr D_{15}^m(\phi_h)|
		\lesssim h^2\|\phi_h\|_{L^2(\Ghsm)}.
	\end{align}
	Combining the five bounds gives
	\begin{align*}
		|\mathscr D_1^m(\phi_h)|
		\lesssim
		\tau\|\phi_h\|_{L^2(\Ghsm)}
		+h^2\|\phi_h\|_{H^1(\Ghsm)}.
	\end{align*}
	
	Next, the stiffness contribution is decomposed as
	\begin{align*}
		\mathscr D_2^m(\phi_h)
		&=\int_{\Ghsm}\nabla_{\Ghsm}X_{h,*}^{m+1}
		\cdot\nabla_{\Ghsm}\phi_h
		-\int_{\Gm}\nabla_{\Gm}{\rm id}
		\cdot\nabla_{\Gm}\phi_h^\ell\\
		&=\int_{\Ghsm}
		\nabla_{\Ghsm}(X_{h,*}^{m+1}-\hat X_{h,*}^m)
		\cdot\nabla_{\Ghsm}\phi_h\\
		&\quad+\int_{\Ghsm}\nabla_{\Ghsm}\hat X_{h,*}^m
		\cdot\nabla_{\Ghsm}\phi_h
		-\int_{\Gm}\nabla_{\Gm}(\hat X_{h,*}^m)^\ell
		\cdot\nabla_{\Gm}\phi_h^\ell\\
		&\quad+\int_{\Gm}
		\nabla_{\Gm}[(I_ha^m|_\Ghsm)^\ell-a^m]
		\cdot\nabla_{\Gm}\phi_h^\ell\\
		&=:\mathscr D_{21}^m(\phi_h)
		+\mathscr D_{22}^m(\phi_h)
		+\mathscr D_{23}^m(\phi_h),
	\end{align*}
	where we have used
	\[
	(\hat X_{h,*}^m)^\ell=(I_ha^m|_\Ghsm)^\ell,
	\qquad
	{\rm id}=a^m\quad\mbox{on $\Gm$}.
	\]
	
	For $\mathscr D_{21}^m$, we use the argument of
	\cite[Lemma~4.3]{BL2024}, and get the bound
%	.
%	By \eqref{eq:X-id1}, the displacement
%	$X_{h,*}^{m+1}-\hat X_{h,*}^m$ is the nodal interpolant of
%	$(Y^{m+1}-{\rm id}_{\Gm})^{-\ell}$.
%	On each straight element, $\nabla_{\Ghsm}\phi_h$ is constant,
%	while the interpolation error vanishes at the endpoints.
%	Elementwise integration by parts therefore gives
	\begin{align*}
		\mathscr D_{21}^m(\phi_h)
		&=\int_{\Ghsm}
		\nabla_{\Ghsm}(Y^{m+1}-{\rm id}_{\Gm})^{-\ell}
		\cdot\nabla_{\Ghsm}\phi_h.
	\end{align*}
	Smoothness of the exact flow ensures
	$\|Y^{m+1}-{\rm id}_{\Gm}\|_{H^2(\Gm)}\lesssim\tau$.
	Thus, Lemma~\ref{lemma:geo-pert}, integration by parts on $\Gm$
	and the inverse inequality imply
	\begin{align*}
		|\mathscr D_{21}^m(\phi_h)|
%		&\lesssim
%		\Big|\int_{\Gm}
%		\nabla_{\Gm}(Y^{m+1}-{\rm id}_{\Gm})
%		\cdot\nabla_{\Gm}\phi_h^\ell\Big|
%		+\tau h^2\|\phi_h\|_{H^1(\Ghsm)}\\
%		&\lesssim
%		\tau\|\phi_h\|_{L^2(\Ghsm)}
%		+\tau h^2\|\phi_h\|_{H^1(\Ghsm)}\\
%		&
		\lesssim\tau\|\phi_h\|_{L^2(\Ghsm)}.
	\end{align*}
	Another application of Lemma~\ref{lemma:geo-pert} yields
	\begin{align*}
		|\mathscr D_{22}^m(\phi_h)|
		\lesssim
		h^2\|\nabla_{\Ghsm}\hat X_{h,*}^m\|_{L^2(\Ghsm)}
		\|\nabla_{\Ghsm}\phi_h\|_{L^2(\Ghsm)}
%		\\
%		&
		\lesssim h^2\|\phi_h\|_{H^1(\Ghsm)}.
	\end{align*}
	The super-approximation estimate in Lemma~\ref{Lemma-GLW},
	applied componentwise to ${\rm id}_{\Gm}$, gives
	\begin{align*}
		|\mathscr D_{23}^m(\phi_h)|
		&\lesssim
		h^2\|{\rm id}_{\Gm}\|_{H^2(\Gm)}
		\|\phi_h\|_{H^1(\Ghsm)}
		\lesssim h^2\|\phi_h\|_{H^1(\Ghsm)}.
	\end{align*}
	Consequently,
	\begin{align*}
		|\mathscr D_2^m(\phi_h)|
		\lesssim
		\tau\|\phi_h\|_{L^2(\Ghsm)}
		+h^2\|\phi_h\|_{H^1(\Ghsm)}.
	\end{align*}
	
	It remains to estimate the stabilization contribution. As in the
	proof of \cite[Lemma~3.2]{Bai2026}, we write
	\begin{align*}
		\mathscr D_3^m(\phi_h)
		&=-\int_{\Ghsm}\nabla_{\Ghsm}I_ha^m
		\cdot\nabla_{\Ghsm}I_h[(I-\nbhsm(\nbhsm)^\top)\phi_h]\\
		&=-\int_{\Gm}\nabla_{\Gm}{\rm id}_{\Gm}
		\cdot\nabla_{\Gm}
		\big(I_h[(I-\nbhsm(\nbhsm)^\top)\phi_h]\big)^\ell\\
		&\quad-\Big(
		\int_{\Ghsm}\nabla_{\Ghsm}({\rm id}_{\Gm})^{-\ell}
		\cdot\nabla_{\Ghsm}I_h[(I-\nbhsm(\nbhsm)^\top)\phi_h]\\
		&\hspace{4.5em}
		-\int_{\Gm}\nabla_{\Gm}{\rm id}_{\Gm}
		\cdot\nabla_{\Gm}
		\big(I_h[(I-\nbhsm(\nbhsm)^\top)\phi_h]\big)^\ell
		\Big)\\
		&\quad+\int_{\Ghsm}\nabla_{\Ghsm}(1-I_h)a^m
		\cdot\nabla_{\Ghsm}I_h[(I-\nbhsm(\nbhsm)^\top)\phi_h]\\
		&=:\mathscr D_{31}^m(\phi_h)
		+\mathscr D_{32}^m(\phi_h)
		+\mathscr D_{33}^m(\phi_h).
	\end{align*}
	Integration by parts on $\Gm$ gives
	\begin{align*}
		|\mathscr D_{31}^m(\phi_h)|
		&=\Big|\int_{\Gm}H^mn^m\cdot
		\big(I_h[(I-\nbhsm(\nbhsm)^\top)\phi_h]\big)^\ell\Big|\\
		&=\Big|
		\int_{\Gm}H^mn^m\cdot
		\big(I_h[(\Tbhsm-\Tsm)\phi_h]\big)^\ell\\
		&\qquad-\int_{\Gm}H^mn^m\cdot
		[(1-I_h)(\Tsm\phi_h)]^\ell\\
		&\qquad+\int_{\Gm}H^mn^m\cdot
		\Big(I_h\Big[
		\Big(\frac{\nbhsm(\nbhsm)^\top}{|\nbhsm|^2}
		-\nbhsm(\nbhsm)^\top\Big)\phi_h
		\Big]\Big)^\ell
		\Big|.
	\end{align*}
	Here the second equality uses
	$n^m\cdot(\Tsm\phi_h)^\ell=0$.
	Lemma~\ref{lemma:n_bar_app} and
	\eqref{eq:nhs_bar_len0} bound the first and third terms by
	$Ch^2\|\phi_h\|_{L^2(\Ghsm)}$.
	For the second term, the standard interpolation estimates (Lemma \ref{lemma:Ih}) give
	\begin{align*}
		\|(1-I_h)(\Tsm\phi_h)\|_{L^2(\Ghsm)}
		\lesssim h^2\|\phi_h\|_{H^1(\Ghsm)}.
	\end{align*}
	Therefore,
	\begin{align*}
		|\mathscr D_{31}^m(\phi_h)|
		\lesssim h^2\|\phi_h\|_{H^1(\Ghsm)}.
	\end{align*}
	
	By Lemma~\ref{lemma:geo-pert} and the boundedness
	\eqref{eq:n_bar_bbd},
	\begin{align*}
		|\mathscr D_{32}^m(\phi_h)|
		\lesssim
		h^2\|\nabla_{\Ghsm}
		I_h[(I-\nbhsm(\nbhsm)^\top)\phi_h]\|_{L^2(\Ghsm)}
%		\\
%		&
		\lesssim h^2\|\phi_h\|_{H^1(\Ghsm)}.
	\end{align*}
	Finally, $(1-I_h)a^m$ vanishes at every element endpoint, whereas
	the gradient of
	$I_h[(I-\nbhsm(\nbhsm)^\top)\phi_h]$
	is constant on each straight element.
	Elementwise integration by parts thus gives
	\[
	\mathscr D_{33}^m(\phi_h)=0.
	\]
	Combining these three estimates yields
	\begin{align*}
		|\mathscr D_3^m(\phi_h)|
		\lesssim h^2\|\phi_h\|_{H^1(\Ghsm)}.
	\end{align*}
	The proof is complete upon collecting the bounds for
	$\mathscr D_1^m$, $\mathscr D_2^m$ and $\mathscr D_3^m$.
\end{proof}

For a nodally normal test function, the stabilization contribution admits
a stronger estimate. The nodal projection identity gives
\begin{align}
	I_h[(I-\nbhsm(\nbhsm)^\top)I_h\Nbhsm\phi_h]^\ell
	=
	I_h[(1-|\nbhsm|^2)I_h\Nbhsm\phi_h]^\ell.
\end{align}
Since $\hat X_{h,*}^m={\rm id}$ on $\Ghsm$, elementwise integration by
parts implies
\begin{align*}
	\mathscr D_3^m(I_h\Nbhsm\phi_h)
	&
	=
	-\int_{\Ghsm}\nabla_{\Ghsm}\hat X_{h,*}^m
	\cdot\nabla_{\Ghsm}
	I_h[(1-|\nbhsm|^2)I_h\Nbhsm\phi_h]\\
	&\quad=
	-\sum_{p\in\mathcal N(\Ghsm)}
	\big(\hat\mu_{h,*}^m(p+)+\hat\mu_{h,*}^m(p-)\big)
	\cdot
	\big[(1-|\nbhsm(p)|^2)(I_h\Nbhsm\phi_h)(p)\big],
\end{align*}
where $\hat\mu_{h,*}^m(p\pm)$ are the outward co-normals of the adjacent
edges. Smoothness of the exact curve and
\eqref{eq:nhs_bar_len0} give
\[
|\hat\mu_{h,*}^m(p+)+\hat\mu_{h,*}^m(p-)|
\lesssim h,
\qquad
|1-|\nbhsm(p)|^2|\lesssim h^2.
\]
Using quasi-uniformity and discrete norm equivalence, we conclude that
\begin{align}
	|\mathscr D_3^m(I_h\Nbhsm\phi_h)|
	\lesssim
	h^3\sum_{p\in\mathcal N(\Ghsm)}
	|(I_h\Nbhsm\phi_h)(p)|
%	\notag\\
%	&
	\lesssim
	h^2\|I_h\Nbhsm\phi_h\|_{L^2(\Ghsm)}.
\end{align}
%This is also the consistency-side stabilization estimate in
%\cite[Eq.~(4.17)]{Bai2026} applied to the nodally normal test function.

We define the critical part by
\[
\tilde{\mathscr D}^m(\phi_h)
:=\mathscr D_{13}^m(\phi_h)
+\mathscr D_{22}^m(\phi_h)
+\mathscr D_{23}^m(\phi_h).
\]
The preceding bounds then imply
\begin{align}
	|\mathscr D^m(I_h\Nbhsm\phi_h)
	-\tilde{\mathscr D}^m(I_h\Nbhsm\phi_h)|
	&\lesssim
	(\tau+h^2)\|I_h\Nbhsm\phi_h\|_{L^2(\Ghsm)},
	\label{eq:Dm-tildeDm}\\
	|\tilde{\mathscr D}^m(\phi_h)|
	&\lesssim h^2\|\phi_h\|_{H^1(\Ghsm)}.
	\label{eq:tildeDm}
\end{align}

\section{Error equation and the estimates for linear and bilinear forms}
\label{Proof-THM-1}

%Throughout this section, we assume the induction hypothesis
%\eqref{eq:ind_hypo1} and identify finite element functions on different
%curves through their nodal values. The geometric decompositions and
%estimates shared with the stabilized Dziuk method are recalled from
%\cite{Bai2026}; we give the necessary adaptations for the normal mass
%bilinear form and the corrected error velocity.

\subsection{The error equation}\label{sec:J_stab}

Subtracting the consistency relation \eqref{eq:cons_eq} from the numerical
scheme \eqref{eq:BGN-stab} gives, for any $\phi_h\in[S_h(\Ghsm)]^2$,
\begin{align}\label{eq:err_eq1}
	&\int_{\Ghm}^h \frac{X_h^{m+1}-X_h^m}{\tau}
	\cdot\nbhm\,\nbhm\cdot\phi_h
	-\int_{\Ghsm}^h \frac{X_{h,*}^{m+1}-\hat X_{h,*}^m}{\tau}
	\cdot\nbhsm\,\nbhsm\cdot\phi_h
	\notag\\
	&\quad+\int_{\Ghm}\nabla_{\Ghm}X_h^{m+1}
	\cdot\nabla_{\Ghm}\phi_h
	-\int_{\Ghsm}\nabla_{\Ghsm}X_{h,*}^{m+1}
	\cdot\nabla_{\Ghsm}\phi_h
	\notag\\
	&\quad-\int_{\Ghm}\nabla_{\Ghm}X_h^m
	\cdot\nabla_{\Ghm}I_h[(I-\nbhm(\nbhm)^\top)\phi_h]
	\notag\\
	&\quad+\int_{\Ghsm}\nabla_{\Ghsm}\hat X_{h,*}^m
	\cdot\nabla_{\Ghsm}I_h[(I-\nbhsm(\nbhsm)^\top)\phi_h]
	=-\mathscr D^m(\phi_h).
\end{align}
The difference of the mass terms is
\begin{align}\label{eq:mass_diff}
	&\int_{\Ghm}^h \frac{X_h^{m+1}-X_h^m}{\tau}
	\cdot\nbhm\,\nbhm\cdot\phi_h
	-\int_{\Ghsm}^h \frac{X_{h,*}^{m+1}-\hat X_{h,*}^m}{\tau}
	\cdot\nbhsm\,\nbhsm\cdot\phi_h
	\notag\\
	&\qquad=
	\int_{\Ghsm}^h \frac{\eM-\ehm}{\tau}
	\cdot\nbhsm\,\nbhsm\cdot\phi_h
	+\mathscr J^m(\phi_h),
\end{align}
where
\begin{align}
%	\label{def-Jm-phi}
	\mathscr J^m(\phi_h)
	&:=\int_{\Ghm}^h \frac{X_h^{m+1}-X_h^m}{\tau}
	\cdot\nbhm\,\nbhm\cdot\phi_h
	\notag\\
	&\quad-\int_{\Ghsm}^h \frac{X_h^{m+1}-X_h^m}{\tau}
	\cdot\nbhsm\,\nbhsm\cdot\phi_h.
	\notag
\end{align}

For $\R^2$-valued functions $u$ and $v$ on a piecewise smooth curve
$\Sigma$, with unit normal $n_\Sigma$, define
\begin{align*}
	\mathscr A_\Sigma(u,v)
	&:=\int_\Sigma\nabla_\Sigma u\cdot\nabla_\Sigma v,\\
	\mathscr A_\Sigma^N(u,v)
	&:=\int_\Sigma[(\nabla_\Sigma u)n_\Sigma]
	\cdot[(\nabla_\Sigma v)n_\Sigma],\\
	\mathscr A_\Sigma^T(u,v)
	&:=\int_\Sigma{\rm tr}\big[
	(\nabla_\Sigma u)(I-n_\Sigma n_\Sigma^\top)
	(\nabla_\Sigma v)^\top\big],\\
	\mathscr B_\Sigma(u,v)
	&:=\int_\Sigma
	(\nabla_\Sigma\cdot u)(\nabla_\Sigma\cdot v)
	-{\rm tr}(\nabla_\Sigma u\nabla_\Sigma v).
\end{align*}
Thus $\mathscr A_\Sigma=\mathscr A_\Sigma^N+\mathscr A_\Sigma^T$.
Writing $\ud_i u:=(\nabla_\Sigma u)_i$, $i=1,2$, and using the Einstein
summation convention, set
\[
(D_\Sigma u)_{rl}:=-\ud_lu_r-\ud_ru_l+\delta_{rl}\ud_j u_j.
\]
The identity from \cite[Eq.~(5.8)]{BL2024} reads
\begin{align}
%	\label{eq:pdf-DGamma-identity}
	\int_\Sigma\nabla_\Sigma{\rm id}\cdot(D_\Sigma u)\nabla_\Sigma v
	=-\mathscr A_\Sigma^T(u,v)+\mathscr B_\Sigma(u,v). \notag
\end{align}

The discrete parabolicity decomposition in \cite[Section~5.2]{BL2024} applies without change to the stiffness
terms:
\begin{align}\label{eq:full_para}
	&\int_{\Ghm}\nabla_{\Ghm}X_h^{m+1}\cdot\nabla_{\Ghm}\phi_h
	-\int_{\Ghsm}\nabla_{\Ghsm}X_{h,*}^{m+1}
	\cdot\nabla_{\Ghsm}\phi_h
	\notag\\
	&\qquad=
	\mathscr A_{h,*}^N(\eM,\phi_h)
	+\mathscr A_{h,*}^T(\eM-\ehm,\phi_h)
	+\mathscr B^m(\ehm,\phi_h)
	+\mathscr K^m(\phi_h),
\end{align}
where
\begin{align}
	\mathscr A_{h,*}^N(u_h,v_h)
	&:=\mathscr A_{\Ghsm}^N(u_h,v_h),
	\qquad
	\mathscr A_{h,*}^T(u_h,v_h)
	:=\mathscr A_{\Ghsm}^T(u_h,v_h),
	\label{def-As-AGs}\\
	\mathscr A_{h,*}(u_h,v_h)
	&:=\mathscr A_{h,*}^N(u_h,v_h)+\mathscr A_{h,*}^T(u_h,v_h),
	\qquad
	\mathscr B^m(u_h,v_h)
	:=\mathscr B_{\Gm}(u_h^\ell,v_h^\ell).
	\label{def-Bm}
\end{align}
With
\[
\hat X_{h,\theta}^m:=(1-\theta)\hat X_{h,*}^m+\theta X_h^m,
\qquad
X_{h,\theta}^{m+1}:=(1-\theta)X_{h,*}^{m+1}+\theta X_h^{m+1},
\]
the remainder is
\begin{align}
	\mathscr K^m(\phi_h)
	&:=\int_0^1
	\big[
	\mathscr A_{\hat\Gamma_{h,\theta}^m}^N(\eM,\phi_h)
	-\mathscr A_{\Ghsm}^N(\eM,\phi_h)
	\big]\,\d\theta
	\notag\\
	&\quad+\int_0^1
	\big[
	\mathscr A_{\hat\Gamma_{h,\theta}^m}^T(\eM-\ehm,\phi_h)
	-\mathscr A_{\Ghsm}^T(\eM-\ehm,\phi_h)
	\big]\,\d\theta
	\notag\\
	&\quad+\int_0^1
	\big[
	\mathscr B_{\hat\Gamma_{h,\theta}^m}(\ehm,\phi_h)
	-\mathscr B_{\Ghsm}(\ehm,\phi_h)
	\big]\,\d\theta
	\notag\\
	&\quad+\mathscr B_{\Ghsm}(\ehm,\phi_h)
	-\mathscr B_{\Gm}((\ehm)^\ell,\phi_h^\ell)
	\notag\\
	&\quad+\int_0^1\int_{\hat\Gamma_{h,\theta}^m}
	\nabla_{\hat\Gamma_{h,\theta}^m}
	(X_{h,\theta}^{m+1}-\hat X_{h,\theta}^m)
	\cdot D_{\hat\Gamma_{h,\theta}^m}\ehm\,
	\nabla_{\hat\Gamma_{h,\theta}^m}\phi_h\,\d\theta. \notag
\end{align}

The stabilization terms have the same decomposition as in
\cite[Eq.~(4.9)]{Bai2026}; see also \cite[Eq.~(4.20)]{BL2025}:
\begin{align}\label{terms-5-6}
	&-\int_{\Ghm}\nabla_{\Ghm}X_h^m
	\cdot\nabla_{\Ghm}I_h[(I-\nbhm(\nbhm)^\top)\phi_h]
	\notag\\
	&\quad+\int_{\Ghsm}\nabla_{\Ghsm}\hat X_{h,*}^m
	\cdot\nabla_{\Ghsm}I_h[(I-\nbhsm(\nbhsm)^\top)\phi_h]
	\notag\\
	&=-\int_{\Ghm}\nabla_{\Ghm}X_h^m
	\cdot\nabla_{\Ghm}
	I_h[(I-\nbhm(\nbhm)^\top-\Tbhm)\phi_h]
	\notag\\
	&\quad+\int_{\Ghsm}\nabla_{\Ghsm}\hat X_{h,*}^m
	\cdot\nabla_{\Ghsm}
	I_h[(I-\nbhsm(\nbhsm)^\top-\Tbhsm)\phi_h]
	\notag\\
	&\quad-\int_{\Ghm}\nabla_{\Ghm}X_h^m
	\cdot\nabla_{\Ghm}I_h[(\Tbhm-\Tbhsm)\phi_h]
	\notag\\
	&\quad-\int_{\Ghm}\nabla_{\Ghm}X_h^m
	\cdot\nabla_{\Ghm}I_h(\Tbhsm\phi_h)
	+\int_{\Ghsm}\nabla_{\Ghsm}\hat X_{h,*}^m
	\cdot\nabla_{\Ghsm}I_h(\Tbhsm\phi_h)
	\notag\\
	&=:\mathscr F_1^m(\phi_h)+\mathscr F_2^m(\phi_h)
	+\mathscr F_3^m(\phi_h)
	\notag\\
	&\quad-\mathscr A_{h,*}^N(\ehm,I_h\Tbhsm\phi_h)
	-\mathscr B^m(\ehm,I_h\Tbhsm\phi_h)
	-\mathscr Q^m(I_h\Tbhsm\phi_h),
\end{align}
where the first three integrals define
$\mathscr F_1^m$, $\mathscr F_2^m$ and $\mathscr F_3^m$, respectively, and
\begin{align*}
	\mathscr Q^m(\phi_h)
	&:=\int_0^1
	\big[
	\mathscr A_{\hat\Gamma_{h,\theta}^m}^N(\ehm,\phi_h)
	-\mathscr A_{\Ghsm}^N(\ehm,\phi_h)
	\big]\,\d\theta\\
	&\quad+\int_0^1
	\big[
	\mathscr B_{\hat\Gamma_{h,\theta}^m}(\ehm,\phi_h)
	-\mathscr B_{\Ghsm}(\ehm,\phi_h)
	\big]\,\d\theta\\
	&\quad+\mathscr B_{\Ghsm}(\ehm,\phi_h)
	-\mathscr B_{\Gm}((\ehm)^\ell,\phi_h^\ell).
\end{align*}

Substituting \eqref{eq:mass_diff}, \eqref{eq:full_para} and
\eqref{terms-5-6} into \eqref{eq:err_eq1} yields
\begin{align}\label{eq:err_eq2}
	&\int_{\Ghsm}^h\frac{\eM-\ehm}{\tau}
	\cdot\nbhsm\,\nbhsm\cdot\phi_h
	+\mathscr J^m(\phi_h)
	\notag\\
	&\quad+\mathscr A_{h,*}^N(\eM,\phi_h)
	+\mathscr A_{h,*}^T(\eM-\ehm,\phi_h)
	+\mathscr B^m(\ehm,\phi_h)+\mathscr K^m(\phi_h)
	\notag\\
	&\quad+\sum_{i=1}^3\mathscr F_i^m(\phi_h)
	-\mathscr A_{h,*}^N(\ehm,I_h\Tbhsm\phi_h)
	-\mathscr B^m(\ehm,I_h\Tbhsm\phi_h)
	-\mathscr Q^m(I_h\Tbhsm\phi_h)
	\notag\\
	&=-\mathscr D^m(\phi_h).
\end{align}
The discrete $H^1$ parabolicity follows from
\begin{align*}
	\mathscr A_{h,*}^N(\eM,\eM)
	+\mathscr A_{h,*}^T(\eM-\ehm,\eM)
	=\mathscr A_{h,*}(\eM,\eM)-\mathscr A_{h,*}^T(\ehm,\eM),
\end{align*}
where the last term is controlled by the cancellation estimate
\eqref{eq:AT_em_em} below.

\subsection{Bilinear error estimates}

%We recall the geometric perturbation estimates from
%\cite[Lemma~4.1]{Bai2026}; cf. \cite[Lemma~4.2]{BL2024}.
%Their proofs use only the geometric comparison of the two curves and
%therefore apply to the present method.

The following bilinear error estimates are standard (cf. \cite[Lemma 4.2]{BL2024}). They can be proved by a fundamental theorem of calculus argument together with the norm equivalence.
\begin{lemma}\label{lemma:e-blinear}
	Assuming the induction hypothesis \eqref{eq:ind_hypo1},
	for all $f_h,g_h\in S_h(\Ghsm)$ and
	$p,q,r\in[1,\infty]$ with $1/p+1/q+1/r=1$, we have
	\begin{align*}
		\Big|\int_{\Ghm}f_hg_h-\int_{\Ghsm}f_hg_h\Big|
		&\lesssim
		\|\nabla_\Ghsm\ehm\|_{L^p(\Ghsm)}
		\|f_h\|_{L^q(\Ghsm)}\|g_h\|_{L^r(\Ghsm)},\\
		\Big|\int_{\Ghm}\nabla_\Ghm f_hg_h
		-\int_{\Ghsm}\nabla_\Ghsm f_hg_h\Big|
		&\lesssim
		\|\nabla_\Ghsm\ehm\|_{L^p(\Ghsm)}
		\|\nabla_\Ghsm f_h\|_{L^q(\Ghsm)}
		\|g_h\|_{L^r(\Ghsm)},
	\end{align*}
	and
	\begin{align*}
		&\Big|\int_{\Ghm}\nabla_\Ghm f_h\cdot\nabla_\Ghm g_h
		-\int_{\Ghsm}\nabla_\Ghsm f_h\cdot\nabla_\Ghsm g_h\Big|\\
		&\qquad\lesssim
		\|\nabla_\Ghsm\ehm\|_{L^p(\Ghsm)}
		\|\nabla_\Ghsm f_h\|_{L^q(\Ghsm)}
		\|\nabla_\Ghsm g_h\|_{L^r(\Ghsm)}.
	\end{align*}
	Moreover, \eqref{normal-intpl-2}--\eqref{normal-intpl-3} give
	\begin{align*}
		&\Big|\int_{\Ghsm}(\nhm-\nhsm)f_hg_h\Big|
		+\Big|\int_{\Ghsm}(\nbhm-\nbhsm)f_hg_h\Big|\\
		&\qquad\lesssim
		\|\nabla_\Ghsm\ehm\|_{L^p(\Ghsm)}
		\|f_h\|_{L^q(\Ghsm)}\|g_h\|_{L^r(\Ghsm)}.
	\end{align*}
\end{lemma}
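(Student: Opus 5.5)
The plan is to connect $\Ghsm$ and $\Ghm$ by the intermediate curves $\hat\Gamma_{h,\theta}^m=\hat X_{h,\theta}^m(\Ghsm)$, $\theta\in[0,1]$, and to write each difference as $\int_0^1\frac{\d}{\d\theta}(\cdots)\,\d\theta$. Throughout, $f_h$ and $g_h$ are kept as fixed nodal vectors, so that $\theta$ only moves the domain. On each straight edge $K_\theta=\hat X_{h,\theta}^m(K)$ the parametrization over $K\subset\Ghsm$ is affine, with tangent vector $w_\theta=(I+\theta\nabla_\Ghsm\ehm)w_0$.

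We first treat the three integral identities.

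\emph{Mass term.} The length element on $K_\theta$ is $|w_\theta|/|w_0|$. Its $\theta$-derivative is bounded by $C|\nabla_\Ghsm\ehm|$ pointwise on $K$, because the induction hypothesis \eqref{eq:ind_hypo1} gives $\|\nabla_\Ghsm\ehm\|_{L^\infty}\le h^{5/4}\ll1$. Hence $|w_\theta|$ stays comparable to $|w_0|$. Pulling back to $\Ghsm$ then gives
\[
\frac{\d}{\d\theta}\int_{\hat\Gamma_{h,\theta}^m} f_hg_h=\int_{\Ghsm} f_hg_h\,\frac{\d}{\d\theta}\frac{|w_\theta|}{|w_0|}.
\]
A Hölder inequality with exponents $p,q,r$ yields the first bound.

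\emph{Gradient terms.} Elementwise we have $\nabla_{K_\theta}f_h=\frac{w_\theta}{|w_\theta|^2}\,(w_0\cdot\nabla_\Ghsm f_h)$. The $\theta$-derivative of the coefficient $w_\theta/|w_\theta|^2$, together with that of the length element, is again bounded by $C|\nabla_\Ghsm\ehm|\,|\nabla_\Ghsm f_h|$ pointwise. This is the transport formula $\partial_\theta^\bullet\nabla f=-(\nabla\ehm)\nabla f$ plus the divergence term, i.e.\ Lemma~\ref{lemma:ud}, item~7. Integrating against $g_h$ or $\nabla g_h$ and applying Hölder gives the second and third bounds. All norms on $\hat\Gamma_{h,\theta}^m$ are finally converted back to $\Ghsm$ uniformly in $\theta$ by Lemma~\ref{lemma:norm-equiv}, which applies since $\|\nabla_\Ghsm\ehm\|_{L^\infty}\le1/2$ for small $h$.

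\emph{Normal terms.} The last estimate needs no homotopy: it is Hölder's inequality on $\Ghsm$ combined with \eqref{normal-intpl-2} and \eqref{normal-intpl-3} in $L^p$.

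The only mildly delicate point is the $L^p$ bound on the normal averages. The averaged normal $\nbhm$ is a nodal quantity, so $\|\nbhm-\nbhsm\|_{L^p(\Ghsm)}$ is not a pointwise statement. It does, however, follow from \eqref{normal-intpl-3} as stated. Controlling the variation of the length weights in that estimate is where the work lies, and we simply invoke it.
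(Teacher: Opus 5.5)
Your proposal is correct and follows the route the paper indicates. The paper proves the three integral differences by a fundamental-theorem-of-calculus argument along the intermediate curves $\hat\Gamma_{h,\theta}^m$ plus norm equivalence, as in \cite[Lemma 4.2]{BL2024}, and the normal terms by H\"older's inequality with \eqref{normal-intpl-2}--\eqref{normal-intpl-3}. One small slip: the transport formula for tangential gradients is item~5 of Lemma~\ref{lemma:ud}, not item~7, though your explicit computation with $w_\theta$ does not need that citation.
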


\subsection{Basic estimates for linear and bilinear forms}
\label{sec:lin-bilin-est}

All $\mathscr B$-related terms vanish on curves, since the tangential
gradient has rank-one structure and hence
\[
{\rm tr}(\nabla_\Sigma u\nabla_\Sigma v)
=(\nabla_\Sigma\cdot u)(\nabla_\Sigma\cdot v).
\]

For the mass perturbation $\mathscr J^m$, we use the homotopy argument
of \cite[Eqs.~(4.28)--(4.31)]{BL2025}. Let
$\bar n_{h,\theta}^m$ be the reversely weighted normal on
$\hat\Gamma_{h,\theta}^m$. Transporting the velocity and the test function
through their nodal values gives
\begin{align}
	\mathscr J^m(\phi_h)
	&=\int_0^1\frac{\d}{\d\theta}
	\int_{\hat\Gamma_{h,\theta}^m}^h
	\frac{X_h^{m+1}-X_h^m}{\tau}
	\cdot\bar n_{h,\theta}^m\,
	\phi_h\cdot\bar n_{h,\theta}^m\,\d\theta.
\end{align}
The differentiation includes both normal vectors and the quadrature
weights. By Lemma~\ref{lemma:ud},
\begin{align}
%	\label{eq:n_t}
	\partial_\theta^\bullet\hat n_{h,\theta}^m
	=-\nabla_{\hat\Gamma_{h,\theta}^m}\ehm
	\cdot\hat n_{h,\theta}^m.
	\notag
\end{align}
Differentiating the edge-length weights in
\eqref{bar-n-hat-n}--\eqref{eq:bar_n} therefore yields, at each node,
\begin{align}
%	\label{eq:n_bar_t_2}
	|\partial_\theta^\bullet\bar n_{h,\theta}^m(p)|
	&\lesssim
	|\nabla_{\hat\Gamma_{h,\theta}^m}\ehm(p+)|
	+|\nabla_{\hat\Gamma_{h,\theta}^m}\ehm(p-)|.
	\notag
\end{align}
H\"older's inequality and norm equivalence then imply
\begin{align}\label{eq:J_est}
	|\mathscr J^m(\phi_h)|
	&\lesssim
	\Big\|\frac{X_h^{m+1}-X_h^m}{\tau}\Big\|_{L^\infty(\Ghsm)}
	\|\nabla_\Ghsm\ehm\|_{L^2(\Ghsm)}
	\|\phi_h\|_{L^2(\Ghsm)}
	\notag\\
	&\lesssim
	(1+\|\delta_\tau\ehm\|_{L^\infty(\Ghsm)})
	\|\nabla_\Ghsm\ehm\|_{L^2(\Ghsm)}
	\|\phi_h\|_{L^2(\Ghsm)},
\end{align}
where \eqref{eq:geo_rel_31}, \eqref{W1infty-g} and the boundedness of the
comparison velocity give
\begin{align}
	\Big\|\frac{X_h^{m+1}-X_h^m}{\tau}\Big\|_{L^\infty(\Ghsm)}
	\lesssim1+\|\delta_\tau\ehm\|_{L^\infty(\Ghsm)}.
\end{align}

The stiffness cancellation estimates are identical to those in
\cite[Section~4.3 and Lemma~C.6]{Bai2026}; see also
Lemma~\ref{lemma:AT-sup}. In particular,
\begin{align}\label{eq:AT_em_em}
	|\mathscr A_{h,*}^T(\ehm,\phi_h)|
	&\lesssim
	\min\Big\{
	\|\ehm\|_{L^2(\Ghsm)}\|\phi_h\|_{H^1(\Ghsm)},
	\|\ehm\|_{H^1(\Ghsm)}\|\phi_h\|_{L^2(\Ghsm)}
	\Big\},
\end{align}
and
\begin{align*}
	|\mathscr A_{h,*}^N(\ehm,I_h\Tbhsm\phi_h)|
	&\lesssim
	\min\Big\{
	\|\ehm\|_{L^2(\Ghsm)}
	\|I_h\Tbhsm\phi_h\|_{H^1(\Ghsm)},
%	\\
%	&\hspace{4.8em}
	\|\ehm\|_{H^1(\Ghsm)}
	\|I_h\Tbhsm\phi_h\|_{L^2(\Ghsm)}
	\Big\}\\
	&\quad+h\|\nabla_\Ghsm\ehm\|_{L^2(\Ghsm)}
	\|\phi_h\|_{L^2(\Ghsm)}.
\end{align*}

To apply the estimate for $\mathscr K^m$ in
\cite[Eq.~(4.14)]{Bai2026}, we account for the corrected error velocity:
\begin{align*}
	\eM-\ehm
	=\tau\delta_\tau\ehm+\tau I_h\Tsm v^m.
\end{align*}
The correction satisfies
$\|I_h\Tsm v^m\|_{W^{1,\infty}(\Ghsm)}\lesssim1$.
%In the geometric perturbation terms, its contribution is bounded by
%\[
%\tau\|\nabla_\Ghsm\ehm\|_{L^2(\Ghsm)}
%\|\nabla_\Ghsm I_h\Tsm v^m\|_{L^\infty(\Ghsm)}
%\|\nabla_\Ghsm\phi_h\|_{L^2(\Ghsm)}
%\lesssim
%\tau\|\nabla_\Ghsm\ehm\|_{L^2(\Ghsm)}
%\|\nabla_\Ghsm\phi_h\|_{L^2(\Ghsm)}.
%\]
The same argument as in \cite[Eq.~(4.14)]{Bai2026}, with this smooth
contribution estimated separately, gives
\begin{align}\label{eq:K_est}
	|\mathscr K^m(\phi_h)|
	&\lesssim
	\|\nabla_\Ghsm\ehm\|_{L^\infty(\Ghsm)}
	\|\nabla_\Ghsm\ehm\|_{L^2(\Ghsm)}
	\|\nabla_\Ghsm\phi_h\|_{L^2(\Ghsm)}
	\notag\\
	&\quad+\tau\Big(
	\|\nabla_\Ghsm\ehm\|_{L^2(\Ghsm)}
	+\|\nabla_\Ghsm\delta_\tau\ehm\|_{L^2(\Ghsm)}
	\|\nabla_\Ghsm\ehm\|_{L^\infty(\Ghsm)}
	\Big)\|\nabla_\Ghsm\phi_h\|_{L^2(\Ghsm)}.
\end{align}
The estimate for $\mathscr Q^m$ follows directly from
\cite[Eq.~(4.15)]{Bai2026}:
\begin{align}\label{eq:Q_est}
	|\mathscr Q^m(I_h\Tbhsm\phi_h)|
	&\lesssim
	\|\nabla_\Ghsm\ehm\|_{L^\infty(\Ghsm)}
	\|\nabla_\Ghsm\ehm\|_{L^2(\Ghsm)}
	\|\nabla_\Ghsm I_h\Tbhsm\phi_h\|_{L^2(\Ghsm)}.
\end{align}

Finally, the functionals $\mathscr F_i^m$, $i=1,2,3$, coincide with the
stabilization remainders in \cite[Section~4.3]{Bai2026}, since the
stabilization and the reversely weighted normals are the same.
Their estimates therefore carry over directly from \cite[Eqs.~(4.16)--(4.18)]{Bai2026}:
\begin{align}
	|\mathscr F_1^m(\phi_h)|
	&\lesssim
	h^{-3/2}\|\nabla_\Ghsm\ehm\|_{L^2(\Ghsm)}^3
	\|\phi_h\|_{L^\infty(\Ghsm)}
	+h^2\|\phi_h\|_{L^2(\Ghsm)},
	\label{Estimate-F1m}\\
	|\mathscr F_2^m(\phi_h)|
	&\lesssim h^2\|\phi_h\|_{L^2(\Ghsm)},
	\label{Estimate-F2m}\\
	|\mathscr F_3^m(\phi_h)|
	&\lesssim
	h^{-1}\|\nabla_\Ghsm\ehm\|_{L^2(\Ghsm)}^2
	\|\phi_h\|_{L^\infty(\Ghsm)}
	+\|\nabla_\Ghsm\ehm\|_{L^2(\Ghsm)}
	\|\phi_h\|_{L^2(\Ghsm)}.
	\label{Estimate-F3m}
\end{align}

\section{Tangential stability and the basic velocity estimates}\label{sec:tan_stab0}

\subsection{Stability of the tangential motion}\label{sec:tan_stab}

We begin by testing the scheme \eqref{eq:BGN-stab} with a nodally tangential finite element function $I_h\Tbhm \phi_h$. The definition of $\dtXm$ and the properties of $v^m$ then give the following estimate.
\begin{lemma}[\!\!{\cite[Eqs. (4.41)--(4.48)]{BL2025}}]\label{lemma:Lap-vm}
	We have
	\begin{align}
		\Big|\int_{\Gamma_h^m} \nabla_{\Gamma_h^m} \dtXm \cdot  \nabla_{\Gamma_h^m} I_h\Tbhm \phi_h \Big|
		\lesssim
		(h^2 + \| \nabla_\Ghsm \ehm \|_{L^2(\Ghsm)})  \| I_h\Tbhm \phi_h  \|_{H^1(\Ghsm)} .
	\end{align}
\end{lemma}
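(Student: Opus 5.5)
The plan is to test the scheme \eqref{eq:BGN-stab} with the nodally tangential function $\psi_h:=I_h\Tbhm\phi_h$. The key observation is that $\psi_h\cdot\nbhm=0$ at every node, because $\Tbhm\nbhm=0$. Hence the mass-lumped term vanishes identically. Moreover, $I_h[\psi_h-(\psi_h\cdot\nbhm)\nbhm]=\psi_h$ nodally, so the scheme reduces to
\begin{align*}
\int_{\Ghm}\nabla_{\Ghm}\frac{X_h^{m+1}-X_h^m}{\tau}\cdot\nabla_{\Ghm}\psi_h=0 .
\end{align*}
Substituting $\dtXm$ from its definition, the left-hand side of the lemma becomes
\begin{align*}
-\int_{\Ghm}\nabla_{\Ghm}I_h[v^m\circ a^m]\cdot\nabla_{\Ghm}\psi_h .
\end{align*}
It therefore suffices to bound this consistency-type quantity by $(h^2+\|\nabla_\Ghsm\ehm\|_{L^2(\Ghsm)})\|\psi_h\|_{H^1(\Ghsm)}$.

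The bound proceeds in three stages.

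First, transfer the integral from $\Ghm$ to $\Ghsm$ using the third estimate of Lemma~\ref{lemma:e-blinear} with $p=\infty$ on the error factor. Since $\|\nabla I_h(v^m)^{-\ell}\|_{L^\infty}\lesssim1$, this perturbation costs $\lesssim\|\nabla_\Ghsm\ehm\|_{L^2(\Ghsm)}\|\psi_h\|_{H^1}$.

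Second, on $\Ghsm$ write $I_h(v^m)^{-\ell}=(v^m)^{-\ell}-(1-I_h)(v^m)^{-\ell}$. The interpolation-error part vanishes exactly after elementwise integration by parts, as in the treatment of $\mathscr D_{33}^m$, because $\nabla\psi_h$ is constant on each straight edge and $(1-I_h)(\cdot)$ vanishes at the endpoints. Then Lemma~\ref{lemma:geo-pert} passes to $\Gm$ at cost $h^2\|\psi_h\|_{H^1}$. Integrating by parts on $\Gm$ and using \eqref{eq:ell_sys_v2} gives
\begin{align*}
\int_{\Gm}\nabla_{\Gm}v^m\cdot\nabla_{\Gm}\psi_h^\ell=\int_{\Gm}\kappa^m n^m\cdot\psi_h^\ell .
\end{align*}

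Third, exploit the nodal tangentiality of $\psi_h$. Write $n^m\cdot\psi_h^\ell$ as $(n^m-(I_h\nsm)^\ell)\cdot\psi_h^\ell$ plus $(I_h\nsm)^\ell\cdot\psi_h^\ell$. The second piece is handled in two parts. Its mass-lumped version is bounded using $|\nsm\cdot\Tbhm\phi_h|\lesssim|\nsm-\nbhm/|\nbhm||$ at the nodes together with Lemma~\ref{lemma:n_bar_app}, \eqref{eq:nhs_bar_len} and \eqref{normal-intpl-3}, which give an $L^2$ bound $h^2+\|\nabla_\Ghsm\ehm\|_{L^2}$. The quadrature error is bounded as for $\mathscr D_{13}^m$ by $h^2\|\psi_h\|_{H^1}$. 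This comparison of the exact normal with the normalized reversely weighted normal $\nbhm$ is the main obstacle. The quadratic deviation $||\nbhm|-1|$ must be controlled by $|\nhm(p+)-\nhm(p-)|^2$, which splits into an $h^2$ contribution plus a $\nabla\ehm$ contribution through \eqref{normal-intpl-2}; the induction hypothesis \eqref{eq:ind_hypo1} absorbs the squared error part.

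The remaining piece, the smooth-versus-interpolant normal difference, is $O(h^2)$ in $L^\infty$ and contributes $h^2\|\psi_h\|_{L^2}$. Finally, we bound $\|\psi_h\|_{H^1(\Ghm)}$ by $\|\psi_h\|_{H^1(\Ghsm)}$ using norm equivalence (Lemma~\ref{lemma:norm-equiv}), and collecting all the terms yields the claimed estimate.
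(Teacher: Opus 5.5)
Your proposal is correct and follows essentially the same route as the argument the paper cites from \cite{BL2025}. Testing with the nodally tangential $\psi_h=I_h\Tbhm\phi_h$ kills the mass and stabilization terms; the equation $-\Delta_\Gamma v=\kappa n$ then reduces the remaining quantity to $\int_{\Gm}\kappa^m n^m\cdot\psi_h^\ell$, up to $O(h^2+\|\nabla_\Ghsm\ehm\|_{L^2(\Ghsm)})$ perturbations, and this integral is small by the super-approximation of the reversely weighted normal in Lemma~\ref{lemma:n_bar_app}.

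Two slips need fixing in the write-up. First, in Lemma~\ref{lemma:e-blinear} the error factor must be taken in $L^2$, with $L^\infty$ on $\nabla_\Ghsm I_h(v^m)^{-\ell}$, not $p=\infty$ on the error factor. Second, the nodal bound should read $|\nsm\cdot\psi_h|=|(\Tbhm\nsm)\cdot\psi_h|\le|\nsm-\nbhm/|\nbhm||\,|\psi_h|$, using $\psi_h(p)=\Tbhm(p)\psi_h(p)$, so that the final estimate is in terms of $\|\psi_h\|$ rather than $\|\phi_h\|$.
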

%\begin{proof}
%	See Appendix \ref{sec:Lap-vm}.
%\end{proof}
Setting $\phi_h = I_h \Tbhm\dtXm$ in Lemma~\ref{lemma:Lap-vm} yields
\begin{align}
%	\label{eq:N+TdotT}
	&\int_{\Gamma_h^m} \nabla_{\Gamma_h^m} \dtXm \cdot  \nabla_{\Gamma_h^m} I_h \Tbhm \dtXm
	%	\notag\\
	%	&
	\lesssim
	(h^2 + \| \nabla_\Ghsm \ehm \|_{L^2(\Ghsm)} ) \| I_h\bar T_h^m \dtXm \|_{H^1(\Ghsm)}  .
	\notag
\end{align}
We also use the following estimate based on orthogonality cancellation. Its proof follows the arguments in \cite[Lemma 4.6]{BL2025} and \cite[Lemma 4.5]{BGV2026}, using the super-approximation of $\nbhsm$ established in Lemma~\ref{lemma:n_bar_app}.
\begin{lemma}[\!\!{\cite[Lemma 4.6]{BL2025} and \cite[Lemma 4.5]{BGV2026}}]\label{lemma:NT_stab}
	For any $f_h,g_h \in S_h(\Ghm)^2$, we have
	\begin{align}\label{eq:NT_stab}
		&\Big| \int_{\Gamma_h^m} \nabla_{\Gamma_h^m} I_h \Nbhm f_h \cdot  \nabla_{\Gamma_h^m} I_h \Tbhm g_h \Big|
		%		\notag\\
		%		&
		\lesssim
		\| f_h \|_{L^2(\Ghsm)} \| g_h \|_{H^1(\Ghsm)}
		\notag\\
		&\quad
		+ (1 + h^{-2} \| \nabla_\Ghsm \ehm \|_{L^2(\Ghsm)})
		\| f_h \|_{L^2(\Ghsm)} \| g_h \|_{L^\infty(\Ghsm)}
		.
	\end{align}
\end{lemma}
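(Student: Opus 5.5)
The plan is to prove \eqref{eq:NT_stab} by a discrete summation by parts on the polygon $\Ghm$. The derivative is moved entirely off $I_h\Nbhm f_h$, and the orthogonality $\Nbhm(p)\Tbhm(p)=0$ at each node then cancels the principal part.

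\textbf{Step 1 (summation by parts).} Label the nodes $p_j$ consecutively and write $\ell_{j+1/2}$ for the length of the edge $[p_j,p_{j+1}]$. Both integrands are piecewise linear, so their tangential gradients are constant on each edge, and
\[
\int_{\Ghm}\nabla_{\Ghm}I_h\Nbhm f_h\cdot\nabla_{\Ghm}I_h\Tbhm g_h
=
\sum_j \frac{(\Nbhm f_h)(p_{j+1})-(\Nbhm f_h)(p_j)}{\ell_{j+1/2}}\cdot\bigl((\Tbhm g_h)(p_{j+1})-(\Tbhm g_h)(p_j)\bigr).
\]
Reindexing (Abel summation on the closed curve) turns this into $-\sum_j(\Nbhm f_h)(p_j)\cdot\Delta_j(\Tbhm g_h)$, where $\Delta_j w:=\frac{w(p_{j+1})-w(p_j)}{\ell_{j+1/2}}-\frac{w(p_j)-w(p_{j-1})}{\ell_{j-1/2}}$ is a weighted second difference.

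\textbf{Step 2 (orthogonality cancellation).} Since $\Nbhm(p_j)\Tbhm(p_j)=0$, only the neighbouring values survive, and they can be replaced by differences of the projector:
\[
\Nbhm(p_j)\Delta_j(\Tbhm g_h)
=
\Nbhm(p_j)\Bigl[\tfrac{(\Tbhm(p_{j+1})-\Tbhm(p_j))g_h(p_{j+1})}{\ell_{j+1/2}}+\tfrac{(\Tbhm(p_{j-1})-\Tbhm(p_j))g_h(p_{j-1})}{\ell_{j-1/2}}\Bigr].
\]
We regroup the bracket as
\[
\tfrac{\Tbhm(p_{j+1})-\Tbhm(p_j)}{\ell_{j+1/2}}\bigl(g_h(p_{j+1})-g_h(p_{j-1})\bigr)
+\Bigl[\tfrac{\Tbhm(p_{j+1})-\Tbhm(p_j)}{\ell_{j+1/2}}+\tfrac{\Tbhm(p_{j-1})-\Tbhm(p_j)}{\ell_{j-1/2}}\Bigr]g_h(p_{j-1}).
\]
For the first piece, \eqref{eq:n_bar_bbd} gives $|\Tbhm(p_{j+1})-\Tbhm(p_j)|\lesssim h$. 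Together with $|g_h(p_{j+1})-g_h(p_{j-1})|\lesssim h\,|\nabla g_h|$ on the two adjacent edges, quasi-uniformity and the discrete norm equivalence (Appendix~\ref{sec:disc-norm} and Lemma~\ref{lemma:norm-equiv}), this piece is bounded by $\|f_h\|_{L^2(\Ghsm)}\|g_h\|_{H^1(\Ghsm)}$.

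\textbf{Step 3 (second difference of $\Tbhm$; main obstacle).} The remaining piece requires the weighted second difference of $\Tbhm$ at $p_j$ to be $O(1)$ after division by the mesh size. Heuristically, a second difference of a smooth field scaled by $h^{-1}$ is $O(h)$, while a nodal perturbation of size $d$ contributes $O(h^{-1}d)$.

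To make this precise, expand $\Tbhm$ nodally around $I_h\Tsm$. Use Lemma~\ref{lemma:n_bar_app} in the nodal form $|\nbhsm(p)-\nsm(p)|\lesssim h^2$, which is what gives the needed $h^2$ defect, and the normalization bound \eqref{eq:nhs_bar_len0}. The smooth part $\Tsm$ has second difference $O(h)$ even on a nonuniform mesh, because the first-order terms in the two one-sided quotients cancel up to $O(1)$ times the derivative of $\Tsm$. The part $\Tbhsm-\Tsm$ is $O(h^2)$ at nodes and therefore contributes $O(h)$.

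The part $\Tbhm-\Tbhsm$ is bounded at $p_j$ by a nodal defect $d_j\lesssim|\nabla\ehm(p_j\pm)|+|\nabla\ehm(p_{j\pm1}\pm)|$, obtained by differentiating the length weights as in the derivation of \eqref{normal-intpl-3}. It contributes at most $h^{-1}d_j$.

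Summing, the remaining piece is bounded by
\[
\sum_j|f_h(p_j)|\,\|g_h\|_{L^\infty}(h+h^{-1}d_j)
\lesssim
\bigl(1+h^{-2}\|\nabla_\Ghsm\ehm\|_{L^2(\Ghsm)}\bigr)\|f_h\|_{L^2(\Ghsm)}\|g_h\|_{L^\infty(\Ghsm)}.
\]
Here $\sum_j|f_h(p_j)|\lesssim h^{-1/2}\|f_h\|_{L^2}$ and $\sum_j|f_h(p_j)|d_j\lesssim h^{-1}\|f_h\|_{L^2}\|\nabla\ehm\|_{L^2}$, both by Cauchy--Schwarz and discrete norm equivalence.

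The delicate points are the nonuniform edge lengths in $\Delta_j$ and the normalization $\nbhm/|\nbhm|$. I would handle both by writing every quantity nodally on $\Ghsm$ and using the Lipschitz dependence of $n\mapsto nn^\top/|n|^2$ near unit vectors.
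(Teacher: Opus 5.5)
Your proof is correct in substance. Note that the paper does not write out its own proof of this lemma. It defers to \cite[Lemma 4.6]{BL2025} and \cite[Lemma 4.5]{BGV2026}, and says only that an orthogonality cancellation is combined with the super-approximation of $\nbhsm$ in Lemma~\ref{lemma:n_bar_app}.

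Your discrete Abel summation is a self-contained realization of that mechanism. Moving the derivative off $I_h\Nbhm f_h$ and using $\Nbhm(p_j)\Tbhm(p_j)=0$ leaves two pieces: first differences of $\Tbhm$, which are harmless against $\|g_h\|_{H^1}$, and a weighted second difference of $\Tbhm$. Lemma~\ref{lemma:n_bar_app} enters exactly where it must. The $O(h^2)$ nodal defect $|\nbhsm(p)-\nsm(p)|$ makes the second difference of $\Tbhsm$ of size $O(h)$ rather than $O(1)$. This is what yields the coefficient $1$, and not $h^{-1}$, in front of $\|f_h\|_{L^2}\|g_h\|_{L^\infty}$. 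With an ordinary or length-weighted average the nodal defect is only $O(h)$ on nonuniform meshes, and the argument would lose a factor $h^{-1}$.

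A proof in the style of the cited works would more likely pass to the smooth projectors and use the interpolation and super-approximation estimates of Appendix~\ref{sec:super}. Your nodal version avoids that machinery and makes the role of the reverse weighting transparent. The price is that it is tied to piecewise linear elements on curves.

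Two small corrections are needed.

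First, the count $\sum_j|f_h(p_j)|\lesssim h^{-1/2}\|f_h\|_{L^2(\Ghsm)}$ is wrong. There are $O(h^{-1})$ nodes and $\|f_h\|_{L^2}^2\sim h\sum_j|f_h(p_j)|^2$, so Cauchy--Schwarz gives $\sum_j|f_h(p_j)|\lesssim h^{-1}\|f_h\|_{L^2(\Ghsm)}$, and this is sharp for constant $f_h$. Multiplied by the $O(h)$ second difference, this produces exactly $\|f_h\|_{L^2}\|g_h\|_{L^\infty}$. So your final bound stands, whereas your stated count would have suggested a spurious extra factor $h^{1/2}$.

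Second, the lengths $\ell_{j\pm1/2}$ in $\Delta_j$ are edge lengths of $\Ghm$, while $\Tsm$ varies along the arclength between the projected nodes on $\Gm$. On the adjacent elements the ratio of these is $1+O(h^2+|\nabla_{\Ghsm}\ehm|)$. Hence the second difference of $\Tsm$ is $O(h+d_j)$ rather than $O(h)$. The extra $O(d_j)$ is dominated by your $h^{-1}d_j$ term, so nothing changes.

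Finally, your use of \eqref{eq:n_bar_bbd} and of $|\nbhm|\approx 1$ relies on the induction hypothesis \eqref{eq:ind_hypo1}. This is consistent with how the paper applies the lemma.
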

%\begin{proof}
%	See Appendix \ref{sec:ortho-cancel}.
%\end{proof}
Combining the preceding two lemmas, we obtain
\begin{align}
%	\label{eq:grad_P_X_diff}
	&\int_{\Gamma_h^m} \nabla_{\Gamma_h^m} I_h \Tbhm\dtXm \cdot  \nabla_{\Gamma_h^m}  I_h \Tbhm \dtXm \notag\\
	&= \int_{\Gamma_h^m} \nabla_{\Gamma_h^m} \dtXm \cdot  \nabla_{\Gamma_h^m} I_h \Tbhm \dtXm \notag\\
	&\quad- \int_{\Gamma_h^m} \nabla_{\Gamma_h^m} I_h \Nbhm \dtXm \cdot  \nabla_{\Gamma_h^m} I_h \Tbhm \dtXm \notag\\
	&\lesssim  (h^2 + \| \nabla_\Ghsm \ehm \|_{L^2(\Ghsm)} ) \| I_h\bar T_h^m \dtXm \|_{H^1(\Ghsm)} \notag\\
	&\quad+ (1 + h^{-4}\| \nabla_\Ghsm \ehm \|_{L^2(\Ghsm)}^2) \| I_h \Nbhm \dtXm \|_{L^2(\Ghsm)}^2
	. \notag
\end{align}
Young's inequality and absorption then yield
\begin{align}\label{eq:tan_stab}
	&\| \nabla_\Ghsm I_h \Tbhm \dtXm \|_{L^2(\Ghsm)}
	\lesssim  h^2 +  \| \nabla_\Ghsm \ehm \|_{L^2(\Ghsm)}  \notag\\
	&\quad+ (1 + h^{-2}\| \nabla_\Ghsm \ehm \|_{L^2(\Ghsm)}) \| I_h \Nbhm \dtXm \|_{L^2(\Ghsm)}  .
\end{align}
The relation
$\dtem = \dtXm - I_h g^m $
from \eqref{eq:geo_rel_31}, together with the tangential stability estimate \eqref{eq:tan_stab} and Lemma~\ref{lemma:n_bar_app}, gives
\begin{align}
%	\label{asdfjklb}
	&\| \nabla_\Ghsm I_h \Tbhsm \dtem \|_{L^2(\Ghsm)}
	+
	\| \nabla_\Ghsm I_h \Tsm \dtem \|_{L^2(\Ghsm)}
	\notag\\
	&
	\leq
	2\| \nabla_\Ghsm I_h \bar T_{h}^m \dtem \|_{L^2(\Ghsm)}
	+
	2\| \nabla_\Ghsm I_h( \bar T_{h}^m -  \bar T_{h,*}^m) \dtem \|_{L^2(\Ghsm)}
	\notag\\
	&\quad
	+
	\| \nabla_\Ghsm I_h(\Tbhsm - \Tsm) \dtem \|_{L^2(\Ghsm)}
	\notag\\
	&\lesssim \tau + h^{2} + \| \nabla_\Ghsm \ehm \|_{L^2(\Ghsm)}  \notag\\
	&\quad+(1 + h^{-2}\| \nabla_\Ghsm \ehm \|_{L^2(\Ghsm)})\| I_h \Nbhm \dtem \|_{L^2(\Ghsm)} \notag\\
	&\quad+ h^{-3/2} \| \nabla_\Ghsm \ehm \|_{L^2(\Ghsm)}  \| \dtem \|_{L^2(\Ghsm)}  \notag\\
	&\quad+ h \| \dtem \|_{L^2(\Ghsm)}  \notag\\
	&\lesssim \tau + h^{2} + \| \nabla_\Ghsm \ehm \|_{L^2(\Ghsm)}  \notag\\
	&\quad+(1 + h^{-2}\| \nabla_\Ghsm \ehm \|_{L^2(\Ghsm)})\| I_h \Nbhsm \dtem \|_{L^2(\Ghsm)} \notag\\
	&\quad+ (h+h^{-3/2} \| \nabla_\Ghsm \ehm \|_{L^2(\Ghsm)}) \| \nabla_\Ghsm I_h \Tbhsm \dtem \|_{L^2(\Ghsm)} , \notag
\end{align}
where the final inequality uses the Poincar\'e inequality (Lemma \ref{lemma:poincare}).
%where we have estimated $\| \Tbhm - \bar T_{h,*}^m \|_{L^2(\Ghsm)}$ by using \eqref{normal-intpl-3} and decomposed the term $\| \dtem \|_{L^\infty(\Ghsm)} $ into its normal and tangential parts, respectively, and have changed $\Nbhm$ to $\Nbhsm$ by using estimate
%$$
%\|\bar N_{h}^m- \bar N_{h,*}^m \|_{L^2(\Ghsm)} \lesssim  \| \nabla_\Ghsm \ehm \|_{L^2(\Ghsm)} \lesssim h^{1.75} .
%$$
%This estimate follows from \eqref{normal-intpl-3} and \eqref{Linfty-W1infty-hat-em}, and can be used to absorb the additional perturbation term caused by changing $\Nbhm$ to $\Nbhsm$.
%Since $\| \nabla_\Ghsm \ehm \|_{L^2(\Ghsm)} \lesssim h^{1.75}$, as shown in \eqref{Linfty-W1infty-hat-em}, the last term on the right-hand side of \eqref{asdfjklb} can be absorbed by the left-hand side. This leads to the following result:
The induction hypothesis \eqref{eq:ind_hypo1} allows us to absorb the last term on the right-hand side and obtain
\begin{align}\label{eq:tan_stab_e}
	&\| \nabla_\Ghsm I_h \Tbhsm \dtem \|_{L^2(\Ghsm)}
	+
	\| \nabla_\Ghsm I_h \Tsm \dtem \|_{L^2(\Ghsm)}
		 \notag\\
		&
	\lesssim (\tau + h^{2}) + \| \nabla_\Ghsm \ehm \|_{L^2(\Ghsm)} 
	+(1 + h^{-2}\| \nabla_\Ghsm \ehm \|_{L^2(\Ghsm)})\| I_h \Nbhsm \dtem \|_{L^2(\Ghsm)} .
\end{align}
%Then, by applying the Poincar\'e inequality with $v_h=I_h \bar T_{h,*}^m \dtem$ satisfying $I_h(v_h\cdot\nbhsm)=0$ in Lemma \ref{lemma:poincare_lump}, we can control the $L^2$ norm of the tangential component $I_h \bar T_{h,*}^m \dtem$ by the left-hand side of \eqref{eq:tan_stab_e}. Since the $L^2$ norm of the normal component $I_h \bar N_{h,*}^m \dtem$ already appears on the right-hand side of \eqref{eq:tan_stab_e}, by summing up the $L^2$ norms of the tangential and normal components of $\dtem$ we obtain the following result:
Applying the Poincar\'e inequality (Lemma \ref{lemma:poincare}) once more yields the following stability estimate for the full error velocity in $L^2$:
\begin{align}\label{eq:tan_stab_e1}
	&\| \dtem \|_{L^2(\Ghsm)}
	%	 \notag\\
	%	&
	\lesssim (\tau + h^{2}) + \| \nabla_\Ghsm \ehm \|_{L^2(\Ghsm)}  \notag\\
	&\quad+(1 + h^{-2}\| \nabla_\Ghsm \ehm \|_{L^2(\Ghsm)})\| I_h \Nbhsm \dtem \|_{L^2(\Ghsm)} .
\end{align}

\subsection{Basic velocity estimates}\label{sec:bbd_vel}

To estimate the normal component of the error velocity, we test the error equation \eqref{eq:err_eq2} with the full error velocity. Choosing $\phi_h =  \delta_\tau\ehm$ gives
\begin{align}\label{eq:err1}
	& \int_{\hat\Gamma_{h, *}^{m}}^h \delta_\tau\ehm \cdot \nbhsm \, \delta_\tau\ehm \cdot \nbhsm
	=
	- \int_{\hat\Gamma_{h, *}^{m}}^h I_h\Tsm v^m \cdot \bar n_{h, *}^m \,   \dtem \cdot \bar n_{h, *}^m
	\notag\\
	&\quad
	- \mathscr D^m( \delta_\tau\ehm) - \mathscr J^m(\delta_\tau\ehm) - \mathscr B^m(\ehm,  \delta_\tau\ehm) - \mathscr K^m( \delta_\tau\ehm)
	\notag\\
	&\quad
	- \mathscr A_{h, *}^N(e_h^{m+1},  \delta_\tau\ehm ) - \mathscr A_{h, *}^T(e_h^{m+1} - \hat{e}_h^m, \delta_\tau\ehm )  \notag\\
	&\quad- \sum_{i=1}^3 \mathscr F_i^m(\delta_\tau\ehm) + \mathscr A_{h, *}^N(\hat e_h^{m}, I_h\Tbhsm \delta_\tau\ehm) + \mathscr B^m(\hat e_h^{m},  I_h\Tbhsm\delta_\tau\ehm) + \mathscr Q^m( I_h\Tbhsm\delta_\tau\ehm) \notag\\
	&\leq
	- \int_{\hat\Gamma_{h, *}^{m}}^h I_h\Tsm v^m \cdot \bar n_{h, *}^m \,   \dtem \cdot \bar n_{h, *}^m
	\notag\\
	&
	- \mathscr D^m( \delta_\tau\ehm) - \mathscr J^m(\delta_\tau\ehm) - \mathscr B^m(\ehm,  \delta_\tau\ehm) - \mathscr K^m( \delta_\tau\ehm)
	\notag\\
	&\quad
	- \mathscr A_{h, *}^N(\ehm+\tau I_h\Tsm v^m,  \delta_\tau\ehm )
	- \mathscr A_{h, *}^T(\tau I_h\Tsm v^m,  \delta_\tau\ehm )
	\notag\\
	&\quad- \sum_{i=1}^3 \mathscr F_i^m(\delta_\tau\ehm) + \mathscr A_{h, *}^N(\hat e_h^{m},  I_h\Tbhsm \delta_\tau\ehm) + \mathscr B^m(\hat e_h^{m},  I_h\Tbhsm \delta_\tau\ehm) + \mathscr Q^m( I_h\Tbhsm \delta_\tau\ehm)
	,
\end{align}
where the final inequality follows by dropping the following two nonpositive terms, which arise from testing with the full error velocity:
\begin{align*}
	- \tau \mathscr A_{h, *}^N(\delta_\tau\ehm, \delta_\tau\ehm)
	\qquad
	\mbox{and}\qquad - \tau \mathscr A_{h, *}^T( \delta_\tau\ehm, \delta_\tau\ehm) .
\end{align*}
Nodal orthogonality and Lemma~\ref{lemma:n_bar_app} give the following bound for the first term on the right-hand side of \eqref{eq:err1}:
\begin{align}
	\label{eq:mass-ortho}
	&\Big| \int_{\hat\Gamma_{h, *}^{m}}^h \dtem \cdot \bar n_{h, *}^m\,  I_h\Tsm v^m \cdot \bar n_{h, *}^m \Big|
	\notag\\
	&
	\lesssim \| I_h\Tsm - \bar T_{h,*}^m \|_{L^2_h(\Ghsm)} \| \dtem \cdot \bar n_{h, *}^m \|_{L^2_h(\Ghsm)}
	\notag\\
	&
	\lesssim h^{2} \| \dtem \cdot \bar n_{h, *}^m \|_{L^2_h(\Ghsm)} .
\end{align}
Estimates \eqref{eq:K_est} and \eqref{eq:Q_est} yield
\begin{align}
	%	\label{eq:K_est1}
	&| \mathscr K^m( \delta_\tau\ehm) | + | \mathscr Q^m(I_h \Tbhsm \delta_\tau\ehm) | \notag\\
	&\lesssim (\tau + \| \nabla_{\Ghsm} \ehm \|_{L^\infty(\Ghsm)}) \| \nabla_\Ghsm \ehm \|_{L^2(\Ghsm)} \| \delta_\tau\ehm \|_{H^1(\Ghsm)} \notag\\
	&\quad+ \tau \| \nabla_\Ghsm \ehm \|_{L^\infty(\Ghsm)}\| \nabla_\Ghsm \delta_\tau\ehm \|_{L^2(\Ghsm)} \| \nabla_\Ghsm  \delta_\tau\ehm \|_{L^2(\Ghsm)}
	. \notag
\end{align}
The definitions of $\mathscr A_{h, *}^N(\cdot,\cdot)$, $\mathscr A_{h, *}^T(\cdot,\cdot)$ and $\mathscr B^m(\cdot,\cdot)$ in \eqref{def-As-AGs}--\eqref{def-Bm} also give
\begin{align}
	%	\label{eq:AB_est}
	|\mathscr A_{h, *}^N(u_h,v_h) | + |\mathscr A_{h, *}^T(u_h,v_h) | + |\mathscr B^m(u_h,v_h) |
	&\lesssim \| u_h \|_{H^1(\Ghsm)} \| v_h \|_{H^1(\Ghsm)} , \notag
\end{align}
for any $u_h, v_h \in [S_h(\Ghsm)]^2$. Inserting these bounds, together with the estimates for $\mathscr F_i^m, i=1,2,3,$ in \eqref{Estimate-F1m}--\eqref{Estimate-F3m}, into \eqref{eq:err1} yields
\begin{align}
	&\| \delta_\tau\ehm \cdot \nbhsm \|_{L_h^2(\Ghsm)}^2
	\notag\\
	&\lesssim  (\tau + h^{2})  \| \dtem \|_{H^1(\Ghsm)}
	+
	h^2 \| \dtem \cdot \nbhsm \|_{L_h^2(\Ghsm)}
	\notag\\
	&\quad
	+ h^{-1/2} \| \nabla_\Ghsm \ehm \|_{L^2(\Ghsm)} \| \dtem \|_{L^2(\Ghsm)}^2
	\notag\\
	&\quad
	+  (\tau+\| \ehm \|_{H^1(\Ghsm)}) \| \dtem \|_{H^1(\Ghsm)}
	\notag\\
	&\quad
	+
	(\tau + \| \nabla_{\Ghsm} \ehm \|_{L^\infty(\Ghsm)}) \| \nabla_\Ghsm \ehm \|_{L^2(\Ghsm)} \| \delta_\tau\ehm \|_{H^1(\Ghsm)} \notag\\
	&\quad+ \tau \| \nabla_\Ghsm \ehm \|_{L^\infty(\Ghsm)}\| \nabla_\Ghsm \delta_\tau\ehm \|_{L^2(\Ghsm)} \| \nabla_\Ghsm  \delta_\tau\ehm \|_{L^2(\Ghsm)}
	%	\notag\\
	%	&\lesssim  h^{-1}(\tau + h^{2}) +  h^{-1} \| \nabla_\Ghsm \ehm \|_{L^2(\Ghsm)}
	\notag
	.
\end{align}
Applying the inverse inequality and Young's inequality, and then using the induction hypothesis \eqref{eq:ind_hypo1}, Lemma \ref{lemma:lump} and Lemma \ref{lemma:poincare}, we obtain
\begin{align}
	&\| I_h \Nbhsm \delta_\tau\ehm \|_{L^2(\Ghsm)}
	\sim \| \delta_\tau\ehm \cdot \nbhsm \|_{L_h^2(\Ghsm)}
	\notag\\
	&\lesssim  h^{-1}(\tau + h^{2})
	+ \epsilon^{-1} h^{-1} \| \ehm \|_{H^1(\Ghsm)}
	\notag\\
	&\quad
	+ h^{-1/4} \| \nabla_\Ghsm \ehm \|_{L^2(\Ghsm)}^{1/2} \| \dtem \|_{L^2(\Ghsm)}
	\notag\\
	&\quad
	+
	(\epsilon h + \tau^{1/2} \| \nabla_\Ghsm \ehm \|_{L^\infty(\Ghsm)}^{1/2}) \| \dtem \|_{H^1(\Ghsm)}
	\notag\\
	&\quad
	+
	\epsilon^{-1}
	h^{-1}
	(\tau + \| \nabla_{\Ghsm} \ehm \|_{L^\infty(\Ghsm)}) \| \nabla_\Ghsm \ehm \|_{L^2(\Ghsm)}
	\notag\\
	%	&\quad+ h^{-1} \tau \| \nabla_\Ghsm \ehm \|_{L^\infty(\Ghsm)}\| \nabla_\Ghsm \delta_\tau\ehm \|_{L^2(\Ghsm)}
	%	\notag\\
	%	&\quad+ h^{-1} \tau \| \nabla_\Ghsm \ehm \|_{L^\infty(\Ghsm)}\| \nabla_\Ghsm \delta_\tau\ehm \|_{L^2(\Ghsm)}
	%	\notag\\
	&\lesssim  h^{-1}(\tau + h^{2})
	+
	\epsilon^{-1} h^{-1}
	\| \ehm \|_{H^1(\Ghsm)}
	\notag\\
	&\quad+ \epsilon \| I_h\Nbhsm \delta_\tau\ehm \|_{L^2(\Ghsm)}
	%	\notag\\
	%	&\quad
	+ h^{5/8} \| \nabla_\Ghsm I_h\Tbhsm \delta_\tau\ehm \|_{L^2(\Ghsm)}
	\notag
	.
\end{align}
We now use the tangential stability estimate \eqref{eq:tan_stab_e} to bound the tangential error velocity $I_h\Tbhsm \delta_\tau\ehm$ in terms of the normal error velocity $I_h\Nbhsm \delta_\tau\ehm$. The resulting contribution involving $I_h\Nbhsm \delta_\tau\ehm$ has a sufficiently small coefficient and can therefore be absorbed into the left-hand side. This gives
\begin{align}\label{eq:vel-est-L2N}
	\| I_h \Nbhsm \delta_\tau\ehm \|_{L^2(\Ghsm)}
	\sim \| \delta_\tau\ehm \cdot \nbhsm \|_{L_h^2(\Ghsm)}
	%		\notag\\
	%	&\lesssim  h^{-1}(\tau + h^{2}) +  h^{-1} \| \nabla_\Ghsm \ehm \|_{L^2(\Ghsm)}
	%	\notag\\
	%	&\quad
	%	+
	%	h^{-1}\tau \| \nabla_\Ghsm \ehm \|_{L^\infty(\Ghsm)} \| \nabla_\Ghsm \delta_\tau\ehm \|_{L^2(\Ghsm)}
	%	\notag\\
	\lesssim  h^{-1}(\tau + h^{2}) +  h^{-1} \| \nabla_\Ghsm \ehm \|_{L^2(\Ghsm)}
	.
\end{align}
Combining this bound with the tangential stability estimate \eqref{eq:tan_stab_e} yields
\begin{align}
	&\label{eq:vel-est-H1T}
	\| \nabla_{\Ghsm} I_h \Tbhsm \delta_\tau\ehm \|_{L^2(\Ghsm)}
	+
	\| \nabla_{\Ghsm} I_h \Tsm \delta_\tau\ehm \|_{L^2(\Ghsm)}
	\notag\\
	&
	\lesssim
	\tau + h^2
	+
	\| \nabla_\Ghsm \ehm \|_{L^2(\Ghsm)}
	%	\notag\\
	%	&\quad
	+(1 + h^{-2} \| \nabla_\Ghsm \ehm \|_{L^2(\Ghsm)})
	\notag\\
	&\qquad\times
	\big( h^{-1}(\tau + h^{2}) +  h^{-1} \| \nabla_\Ghsm \ehm \|_{L^2(\Ghsm)}  \big)
	\notag\\
	&\lesssim
	h^{-1}(\tau + h^{2})
	+
	h^{-1} \| \nabla_\Ghsm \ehm \|_{L^2(\Ghsm)}
	+
	h^{-3} \| \nabla_\Ghsm \ehm \|_{L^2(\Ghsm)}^2
	%	\notag\\
	%	&\quad
	%	+
	%	(1 + h^{-1} + h^{-3}(\tau+h^2)) \| \nabla_\Ghsm \ehm \|_{L^2(\Ghsm)}
	%	\notag\\
	%	&\quad
	%	+
	%	h^{-3} \| \nabla_\Ghsm \ehm \|_{L^2(\Ghsm)}^2
	,
\end{align}
and hence the following estimate for the full error velocity in $L^2$:
\begin{align}
	\label{eq:vel-est-L2}
	\|  \delta_\tau\ehm \|_{L^2(\Ghsm)}
	&\lesssim
	\| I_h \Nbhsm \delta_\tau\ehm \|_{L^2(\Ghsm)}
	+
	\| \nabla_\Ghsm I_h \Tbhsm \delta_\tau\ehm \|_{L^2(\Ghsm)}
	\notag\\
	&\lesssim
	h^{-1}(\tau + h^{2})
	+
	h^{-1} \| \nabla_\Ghsm \ehm \|_{L^2(\Ghsm)}
	+
	h^{-3} \| \nabla_\Ghsm \ehm \|_{L^2(\Ghsm)}^2
	,
\end{align}
and the corresponding $H^1$ estimate
\begin{align}
	\label{eq:vel-est-H1}
	\|  \delta_\tau\ehm \|_{H^1(\Ghsm)}
	&\lesssim
	h^{-1}
	\| I_h \Nbhsm \delta_\tau\ehm \|_{L^2(\Ghsm)}
	+
	\| \nabla_\Ghsm I_h \Tbhsm \delta_\tau\ehm \|_{L^2(\Ghsm)}
	\notag\\
	&\lesssim
	h^{-2}(\tau + h^{2})
	+
	h^{-2} \| \nabla_\Ghsm \ehm \|_{L^2(\Ghsm)}
	+
	h^{-3} \| \nabla_\Ghsm \ehm \|_{L^2(\Ghsm)}^2
	.
\end{align}
Combining the velocity estimates \eqref{eq:vel-est-H1T} and \eqref{eq:vel-est-L2} with the induction hypothesis \eqref{eq:ind_hypo1} and the step-size condition $\tau\leq c_0 h^{2}$ gives the preliminary bounds
\begin{align}\label{eq:a-priori-delta-e}
	\| \nabla_\Ghsm I_h\Tbhsm \delta_\tau\ehm \|_{L^2(\Ghsm)}
	+
	\| \nabla_\Ghsm I_h\Tsm \delta_\tau\ehm \|_{L^2(\Ghsm)}
	\lesssim h^{1/2}
	,\qquad
	\| \delta_\tau\ehm \|_{L^2(\Ghsm)}
	\lesssim h^{1/2} .
\end{align}

%Under the step-size condition $\tau\simeq h^2$, the above estimate simplifies to
%\begin{align}\label{eq:vel-est-H1T}
%	\| \nabla_{\Ghsm} I_h \Tsm \delta_\tau\ehm \|_{L^2(\Ghsm)}
%	&\lesssim
%	h+ h^{-1} \| \nabla_\Ghsm \ehm \|_{L^2(\Ghsm)} + h^{-5/2} \| \nabla_\Ghsm \ehm \|_{L^2(\Ghsm)}^2
%	.
%\end{align}
%
%Estimates \eqref{eq:vel-est-L2N} and \eqref{eq:vel-est-H1T} control, respectively, the full velocity in $L^2$ and the tangential velocity in $H^1$. They are of vital importance in the later shape regularity analysis in Section~\ref{sec:bbd}.

\subsection{Estimates for $\eM$ and $\ehM$}

Eq. \eqref{eq:a-priori-delta-e} yields
\begin{align}
%	\label{eq:e_NT}
	\| \eM - \ehm \|_{L^2(\Ghsm)}
	\lesssim \tau + \tau \| \dtem \|_{L^2(\Ghsm)}
%	\notag\\
%	&
	\lesssim \tau  ,
	\notag
\end{align}
and, similarly, under the step-size condition $\tau\leq c_0 h^2$, we obtain
\begin{align*}
	\| e_h^{m+1}-\hat e_{h}^m\|_{H^1(\Ghsm)}
	&\lesssim \tau + \tau \| \dtem \|_{H^1(\Ghsm)}  \\
	&\lesssim \tau + h^{-2}\tau  (\tau + h^{2}) + h^{-2} \tau \| \nabla_\Ghsm \ehm \|_{L^2(\Ghsm)}
	+ h^{-3} \tau \| \nabla_\Ghsm \ehm \|_{L^2(\Ghsm)}^2
	\\
	&\lesssim \tau + \| \nabla_\Ghsm \ehm \|_{L^2(\Ghsm)} .
\end{align*}
Using the triangle inequality, these estimates imply
\begin{align}
	\| e_h^{m+1}\|_{L^2(\Ghsm)}
	&\lesssim \tau + \| \ehm \|_{L^2(\Ghsm)} , \label{hat-ehm-L2} \\
	\| e_h^{m+1}\|_{H^1(\Ghsm)}
	&\lesssim \tau + \| \nabla_\Ghsm \ehm \|_{L^2(\Ghsm)} .
	\label{hat-ehm-H1}
\end{align}

By construction, $|\ehm(p)| \leq |e_h^m(p)|$ at every finite element node $p\in\mathcal N(\Ghsm)$. This nodal inequality gives the corresponding bound in the mass-lumped discrete norm (see Appendix~\ref{sec:disc-norm}):
\begin{align}
%	\label{eq:hat-stab}
	\| \ehm \|_{L^p_h(\hat\Gamma_{h,*}^{m})} \leq \| e_h^m \|_{L^p_h(\hat\Gamma_{h,*}^{m})}
	\quad\forall p\in[1,\infty]
	. \notag
\end{align}
The geometric relations \eqref{eq:geo_rel_1}--\eqref{eq:geo_rel_2} and the inverse inequality also give the following a priori $H^1$ estimate for $\ehM$:
\begin{align}
	\| \ehM \|_{H^1(\Ghsm)}
	&\lesssim
	\| \eM \|_{H^1(\Ghsm)}
	+
	h^{-1}\| \eM \|_{L^\infty(\Ghsm)} \| \eM \|_{L^2(\Ghsm)}
	\lesssim
	\| \eM \|_{H^1(\Ghsm)}
	%	\lesssim h^{3/4}
%	\label{eq:ehM-H1-prior}
	\notag
	.
\end{align}

\subsection{Displacement estimates}\label{sec:disc-est}

The velocity estimates also allow us to compare $\Ghm, \GhM, \Ghsm, \GhsM$ and $\Gamma_{h,*}^{m+1}$. By Lemma~\ref{lemma:norm-equiv}, the $L^p$ and $W^{1,p},p\in[1,\infty],$ norms on these curves are equivalent provided that their pairwise displacements are sufficiently small in the $W^{1,\infty}$ norm.

We first obtain a preliminary bound for the displacement between consecutive consistency curves:
\begin{align}
%	\label{eq:hat_X_s_diff}
	&\| \hat X_{h,*}^{m+1} - \hat X_{h,*}^{m} \|_{L^{\infty}(\Ghsm)} \notag\\
	&\le
	\| \hat X_{h,*}^{m+1} - X_{h}^{m+1} \|_{L^{\infty}(\Ghsm)}
	+\| X_{h}^{m+1} - X_{h}^{m} \|_{L^{\infty}(\Ghsm)}
	+ \| X_{h}^{m} - \hat X_{h,*}^{m} \|_{L^{\infty}(\Ghsm)} \notag\\
	&=
	\| \hat e_{h}^{m+1} \|_{L^{\infty}(\Ghsm)}
	+ \|\eM - \ehm + \tau I_h(-H^mn^m + g^m)  \|_{L^{ \infty}(\Ghsm)}
	+\| \hat e_{h}^m \|_{L^{\infty}(\Ghsm)} \notag\\
	&\lesssim
	\| \hat e_{h}^{m+1} \|_{L^{\infty}(\Ghsm)}
	+ \|\eM \|_{L^{ \infty}(\Ghsm)}
	+\| \hat e_{h}^m \|_{L^{\infty}(\Ghsm)} + \tau  \notag\\
	&\lesssim
	\tau
	+
	\| \ehm \|_{L^{ \infty}(\Ghsm)}
	+
	\tau\| \delta_\tau\ehm \|_{L^{ \infty}(\Ghsm)}
	\lesssim h^{7/4}
	. \notag
\end{align}
To sharpen this bound, we use the decomposition
\begin{align}\label{eq:X-hat-diff-decomp}
	\hat X_{h,*}^{m+1} - \hat X_{h,*}^{m}
	&=
	I_h\Nsm(\hat X_{h,*}^{m+1} - \hat X_{h,*}^{m})
	+
	I_h\Tsm(\hat X_{h,*}^{m+1} - \hat X_{h,*}^{m})
	\notag\\
	&=
	I_h((Y^{m + 1} - {\rm id}_\Gm)\circ a^m|_\Ghsm)
	+
	\rho_h^m
	\notag\\
	&\quad
	+
	I_h\Tsm( X_{h}^{m+1} - X_{h}^{m})
	\notag\\
	&\quad
	-
	I_h\Tsm((N_*^{m+1}\circ\hat X_{h,*}^{m+1}-N_*^{m}\circ\hat X_{h,*}^{m})\ehM)
	\notag\\
	&=
	I_h((Y^{m + 1} - {\rm id}_\Gm)\circ a^m|_\Ghsm)
	+
	\rho_h^m
	\notag\\
	&\quad
	+
	I_h\Tsm( \eM - \ehm - \tau I_h\big(v^m \circ a^m|_\Ghsm \big))
	+
	\tau
	I_h\Tsm \big((v^m-H^m n^m + g^m) \circ a^m|_\Ghsm \big)
	\notag\\
	&\quad
	-
	I_h\Tsm((N_*^{m+1}\circ\hat X_{h,*}^{m+1}-N_*^{m}\circ\hat X_{h,*}^{m})\ehM)
	\notag\\
	&=
	I_h((Y^{m + 1} - {\rm id}_\Gm)\circ a^m|_\Ghsm)
	+
	\rho_h^m
	\notag\\
	&\quad
	+
	\tau I_h\Tsm\dtem
	+
	\tau I_h\Tsm \big((v^m-H^m n^m + g^m) \circ a^m|_\Ghsm \big)
	\notag\\
	&\quad
	-
	I_h\Tsm((N_*^{m+1}\circ\hat X_{h,*}^{m+1}-N_*^{m}\circ\hat X_{h,*}^{m})\ehM)
	.
\end{align}
Taking the $L^\infty$ norm in \eqref{eq:X-hat-diff-decomp} gives
\begin{align}
	\| \hat X_{h,*}^{m+1} - \hat X_{h,*}^{m} \|_{L^\infty(\Ghsm)}
	&\lesssim
	\tau
	+
	\| \rho_h^m  \|_{L^\infty(\Ghsm)}
	%	\notag\\
	%	&\quad
	+
	\tau \| I_h\Tsm\dtem  \|_{L^\infty(\Ghsm)}
	\notag\\
	&\quad
	+
	\|
	I_h\Tsm((N_*^{m+1}\circ\hat X_{h,*}^{m+1}-N_*^{m}\circ\hat X_{h,*}^{m})\ehM)
	\|_{L^\infty(\Ghsm)}
	\notag\\
	&\lesssim
	\tau
	%	 \notag\\
	%	 &\quad
	+
	\tau \| I_h\Tsm\dtem  \|_{L^\infty(\Ghsm)}
	%	 \notag\\
	%	 &\quad
	+
	h^{7/4}
	\| \hat X_{h,*}^{m+1} - \hat X_{h,*}^{m} \|_{L^\infty(\Ghsm)}
	%	 \|
	%	 I_h\Tsm\ehM
	%	 \|_{L^\infty(\Ghsm)}
	, \notag
\end{align}
where we have used Lemma~\ref{lemma:super_conv-nonlinear}, the bound $\| \ehM \|_{L^\infty(\Ghsm)}\lesssim h^{7/4}$, and the estimate
\begin{align}
	\| \rho_h^m  \|_{L^\infty(\Ghsm)}
	&\lesssim
	\tau^2 +  \| | I_h T_*^m (\hat X_{h,*}^{m + 1} -\hat X_{h,*}^{m})|^2 \|_{L^\infty(\Ghsm)}
	\notag\\
	&\lesssim
	\tau^2 + {h^{7/4}} \| I_h T_*^m (\hat X_{h,*}^{m + 1} -\hat X_{h,*}^{m}) \|_{L^\infty(\Ghsm)}
	\notag .
\end{align}
Absorbing the final term and using the velocity estimate \eqref{eq:vel-est-H1T}, we obtain
\begin{align}
%	\label{eq:X-hat-disp-Linf}
	\| \hat X_{h,*}^{m+1} - \hat X_{h,*}^{m} \|_{L^\infty(\Ghsm)}
	&\lesssim
	\tau
	. \notag
\end{align}
This improves the bound for $\rho_h^m$ to
\begin{align}
	\| \rho_h^m  \|_{L^\infty(\Ghsm)}
	\lesssim
	\tau^2 +  \| | I_h T_*^m (\hat X_{h,*}^{m + 1} -\hat X_{h,*}^{m})|^2 \|_{L^\infty(\Ghsm)}
	\lesssim
	\tau^2 . \notag
\end{align}
Next, taking the $W^{1,\infty}$ norm in \eqref{eq:X-hat-diff-decomp} and applying the inverse inequality gives
\begin{align}
	\| \hat X_{h,*}^{m+1} - \hat X_{h,*}^{m} \|_{W^{1,\infty}(\Ghsm)}
	&\lesssim
	\tau
	+
	\| \rho_h^m  \|_{W^{1,\infty}(\Ghsm)}
	%	\notag\\
	%	&\quad
	+
	\tau \| I_h\Tsm\dtem  \|_{W^{1,\infty}(\Ghsm)}
	\notag\\
	&\quad
	+
	\|
	I_h\Tsm((N_*^{m+1}\circ\hat X_{h,*}^{m+1}-N_*^{m}\circ\hat X_{h,*}^{m})\ehM)
	\|_{W^{1,\infty}(\Ghsm)}
	\notag\\
	&\lesssim
	\tau + h^{3/4} \| \hat X_{h,*}^{m+1} - \hat X_{h,*}^{m} \|_{W^{1,\infty}(\Ghsm)}
	. \notag
\end{align}
A further absorption argument yields
\begin{align}\label{eq:X-hat-disp-W1inf}
	\| \hat X_{h,*}^{m+1} - \hat X_{h,*}^{m} \|_{W^{1,\infty}(\Ghsm)}
	&\lesssim
	\tau
	.
\end{align}
Combining this bound with Lemma~\ref{lemma:super_conv-nonlinear} and the Lipschitz continuity of the normal projection yields
\begin{align}\label{eq:X-hat-disp-W1inf1}
	\| I_h (\NsM\circ \hat X_{h,*}^{m+1} -\Nsm\circ \hat X_{h,*}^{m}) \|_{W^{1,\infty}(\Ghsm)}
	&\lesssim
	\tau
	.
\end{align}

The preceding displacement estimates and Lemma~\ref{lemma:norm-equiv} therefore establish the equivalence of the $L^p$ and $W^{1,p}$ norms, for all $p\in[1,\infty]$, on $\Ghm, \GhM, \Ghsm, \GhsM$ and $\Gamma_{h,*}^{m+1}$ for finite element functions with a common nodal vector.

\section{Super-convergent velocity estimates and the convergence proof}\label{sec:bbd_vel_Abel}

\subsection{Bihomotopy family of curves}\label{sec:bihomo}

We use \(\Ghso\) as the fixed reference curve and extend the superscript
\(m+\alpha\) to denote the time \(t_{m}+\alpha\tau\).
For \(0\leq\alpha,\theta\leq1\), we define the two-parameter homotopy family
\begin{align}
	\hat X_{h,*}^{m+\alpha,\theta}
	&:=[(1-\theta)I_h+\theta]
	\left[a^{m+\alpha}\circ
	\bigl((1-\alpha)\hat X_{h,*}^{m}+\alpha\hat X_{h,*}^{m+1}\bigr)\right],
	\label{bhk1:homotopy}\\
	\hat\Gamma_{h,*}^{m+\alpha,\theta}
	&:=\hat X_{h,*}^{m+\alpha,\theta}(\Ghso).
	\notag
\end{align}
The parameter \(\alpha\) connects consecutive time levels. For fixed
\(\alpha\), the parameter \(\theta\) connects the affine interpolant to its smooth
projected parametrization, whose image at \(\theta=1\) is
\(\Gamma^{m+\alpha}\).
The four corners of this family are given by
\begin{equation}
	\begin{array}{c|c|c|c}
		(\alpha,\theta)&\text{map from }\Ghso&\text{image}&\text{test}\\ \hline
		(0,0)&\hat X_{h,*}^{m}&\Ghsm&\phi_h\text{ at }m\\
		(0,1)&a^{m}\circ\hat X_{h,*}^{m}&\Gamma^{m}&\phi_h^{\ell,m}\\
		(1,0)&\hat X_{h,*}^{m+1}&\GhsM&\phi_h\text{ at }m+1\\
		(1,1)&a^{m+1}\circ\hat X_{h,*}^{m+1}&\Gamma^{m+1}&\phi_h^{\ell,m+1}.
	\end{array}
	\label{bhk1:corners}
\end{equation}

We also introduce
\(\hat X_{h,*}^{m+\alpha}=(1-\alpha)\hat X_{h,*}^{m}+\alpha\hat X_{h,*}^{m+1}\in [S_h(\Ghsm)]^2\), which parametrizes the linearly interpolated curve $\hat \Gamma_{h,*}^{m+\alpha}$ with transport velocity $\hat v_{h,*}^m:= (\hat X_{h,*}^{m+1} - \hat X_{h,*}^{m})/\tau$.
In general, $\hat X_{h,*}^{m+\alpha,\theta}\notin [S_h(\Ghso)]^2$ unless $\theta=0$, and $\hat X_{h,*}^{m+\alpha,0} \neq \hat X_{h,*}^{m+\alpha}$ unless $\alpha=0,1$.

Let \(\phi_h^0\) denote the fixed reference realization of the nodal
vector of \(\phi_h\). We define $\phi_h^{\alpha,\theta}$ by transporting \(\phi_h^0\) through $\hat X_{h,*}^{m+\alpha,\theta}$:
\begin{equation}
	\phi_h^{\alpha,\theta}\circ
	\hat X_{h,*}^{m+\alpha,\theta}=\phi_h^0. \notag
%	\label{bhk1:test-transport}
\end{equation}
Thus \(\mda\phi_h^{\alpha,\theta}=\mdc\phi_h^{\alpha,\theta}=0\).

\begin{lemma}
	\label{bhk1:velocities}
	On the reference domain $\Ghso$, define the bihomotopy velocities by
	\begin{align}
		V_\alpha := \partial_\alpha \hat X_{h,*}^{m+\alpha,\theta},
		\qquad
		V_\theta := \partial_\theta \hat X_{h,*}^{m+\alpha,\theta},
		\qquad
		V_{\alpha,\theta} := \partial_{\alpha,\theta} \hat X_{h,*}^{m+\alpha,\theta} . \notag
	\end{align}
	These velocities satisfy the following estimates:
	\begin{align}
		\|V_\theta\|_{L^\infty(\Ghso)}
		+h\|\nabla_\Ghso V_\theta\|_{L^\infty(\Ghso)}&\lesssim h^2,
		\label{bhk1:Vtheta-bound}\\
		\|V_\alpha\|_{H^1(\Ghso)}
		+\|\nabla_\Ghso^2V_\alpha\|_{L^2(\Ghso)}
		&\lesssim\tau(1+\|\hat v_{h,*}^{m}\|_{H^1(\Ghsm)}),
		\label{bhk1:Valpha-bound}\\
		\|V_{\alpha,\theta}\|_{L^2(\Ghso)}
		+h\|\nabla_\Ghso V_{\alpha,\theta}\|_{L^2(\Ghso)}
		&\lesssim\tau h^2(1+\|\hat v_{h,*}^{m}\|_{H^1(\Ghsm)}).
		\label{bhk1:Vmixed-bound}
	\end{align}
	Here the second derivatives in $\|\nabla_\Ghso^2V_\alpha\|_{L^2(\Ghso)}$ are taken elementwise.
\end{lemma}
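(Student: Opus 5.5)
The plan is to write the bihomotopy in terms of a single elementwise smooth map and read off each velocity from linear interpolation estimates. Set
\[
Z_\alpha:=a^{m+\alpha}\circ\hat X_{h,*}^{m+\alpha},
\qquad
\hat X_{h,*}^{m+\alpha}=(1-\alpha)\hat X_{h,*}^{m}+\alpha\hat X_{h,*}^{m+1}.
\]
Then \eqref{bhk1:homotopy} reads
\[
\hat X_{h,*}^{m+\alpha,\theta}=I_hZ_\alpha+\theta(1-I_h)Z_\alpha .
\]
Since $I_h$ acts only on the reference variable, it commutes with $\partial_\alpha$. This gives the explicit formulas
\[
V_\theta=(1-I_h)Z_\alpha,\qquad
V_\alpha=[(1-\theta)I_h+\theta]\,\partial_\alpha Z_\alpha,\qquad
V_{\alpha,\theta}=(1-I_h)\,\partial_\alpha Z_\alpha .
\]
Before anything else, I will check that $Z_\alpha$ is well defined. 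By the displacement bound \eqref{eq:X-hat-disp-W1inf}, $\hat X_{h,*}^{m+\alpha}$ lies within $O(\tau)$ of $\Ghsm\subset U_\delta(\Gm)$. By smoothness of the flow, $U_\delta(\Gamma^{m+\alpha})$ contains a fixed neighborhood of $\Gm$ for small $\tau$. Hence the composition makes sense, and $a^{m+\alpha}$ has $C^3$ bounds in $(t,x)$ that are uniform in $\alpha$.

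For \eqref{bhk1:Vtheta-bound}, I would apply Lemma~\ref{lemma:Ih} with $p=\infty$ on $\Ghso$. On each reference edge, $\hat X_{h,*}^{m+\alpha}$ is affine, so the chain rule gives
\[
\nabla^2 Z_\alpha=D^2a^{m+\alpha}\bigl[\nabla\hat X_{h,*}^{m+\alpha},\nabla\hat X_{h,*}^{m+\alpha}\bigr].
\]
This is bounded by $C_{\kappa_l}$, using \eqref{eq:X-hat-disp-W1inf} to control $\nabla\hat X_{h,*}^{m+1}$ by $\kappa_l$. Lemma~\ref{lemma:Ih} then yields $\|V_\theta\|_{L^\infty}+h\|\nabla V_\theta\|_{L^\infty}\lesssim h^2$.

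For \eqref{bhk1:Valpha-bound}, I differentiate:
\[
\partial_\alpha Z_\alpha=\tau\bigl(\partial_ta\bigr)^{m+\alpha}\circ\hat X_{h,*}^{m+\alpha}+Da^{m+\alpha}\circ\hat X_{h,*}^{m+\alpha}\,\tau\hat v_{h,*}^m .
\]
The first term is smooth with $W^{2,\infty}_h$ norm $\lesssim\tau$. For the second term, $\hat v_{h,*}^m$ is affine on each edge, so its elementwise second derivative vanishes. Consequently $\nabla^2$ of the product involves only $D^3a$ and $D^2a$ times $\hat v_{h,*}^m$ and $\nabla\hat v_{h,*}^m$. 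This gives
\[
\|\partial_\alpha Z_\alpha\|_{H^1}+\|\nabla^2\partial_\alpha Z_\alpha\|_{L^2}\lesssim\tau\bigl(1+\|\hat v_{h,*}^m\|_{L^\infty}+\|\hat v_{h,*}^m\|_{H^1}\bigr).
\]
In one dimension the $L^\infty$ norm is controlled by the $H^1$ norm on the closed curve, so the right-hand side is $\lesssim\tau(1+\|\hat v_{h,*}^m\|_{H^1(\Ghsm)})$. Norms on $\Ghso$ and $\Ghsm$ are exchanged using $\kappa_l$.

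It remains to pass from $\partial_\alpha Z_\alpha$ to $V_\alpha$. The nodal interpolant on a one-dimensional mesh is $H^1$-stable elementwise, since $\|\nabla I_hf\|_{L^2(K)}\le\|\nabla f\|_{L^2(K)}$ and $\|I_hf\|_{L^\infty}\le\|f\|_{L^\infty}$. Moreover $\nabla^2 I_hf=0$ elementwise. Together these give the bound for $V_\alpha$.

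For \eqref{bhk1:Vmixed-bound}, Lemma~\ref{lemma:Ih} with $p=2$ applied to $\partial_\alpha Z_\alpha$ gives $h^2\|\partial_\alpha Z_\alpha\|_{H^2_h}$, which is controlled by the previous paragraph.

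The main obstacle I expect is bookkeeping rather than analysis. One point is making sure no term of the form $\nabla^2\hat v_{h,*}^m$ appears; this holds because everything is computed elementwise on the fixed reference mesh $\Ghso$, where all finite element maps are affine. The other point is keeping the constants uniform in $(\alpha,\theta)$ via the $W^{1,\infty}$ displacement bound \eqref{eq:X-hat-disp-W1inf} and Lemma~\ref{lemma:norm-equiv}.
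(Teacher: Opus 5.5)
Your proposal is correct and follows essentially the same route as the paper. The paper derives the same three formulas for $V_\theta$, $V_\alpha$ and $V_{\alpha,\theta}$ in terms of $(1-I_h)$ and $[(1-\theta)I_h+\theta]$. It computes the elementwise second derivative of $\tau^{-1}\partial_\alpha Z_\alpha$, using that $\hat X_{h,*}^{m+\alpha}$ and $\hat v_{h,*}^m$ are affine on each reference edge, and then concludes \eqref{bhk1:Vmixed-bound} from Lemma~\ref{lemma:Ih}. Your handling of \eqref{bhk1:Vtheta-bound} and \eqref{bhk1:Valpha-bound}, which the paper omits as simpler, is sound: the elementwise $H^1$-stability of $I_h$ holds, and the detour through $\|\hat v_{h,*}^m\|_{L^\infty}$ via the one-dimensional embedding is harmless.
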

\begin{proof}
	We prove \eqref{bhk1:Vmixed-bound}; the proofs of \eqref{bhk1:Vtheta-bound} and \eqref{bhk1:Valpha-bound} follow by similar, simpler arguments and are omitted.
	
	Differentiating the bihomotopy map \eqref{bhk1:homotopy} gives
	\begin{align}
		V_\theta&=(1-I_h)(a^{m+\alpha}\circ \hat X_{h,*}^{m+\alpha}),\notag\\
		V_\alpha&=\tau[(1-\theta)I_h+\theta]
		\bigl[(\partial_ta)^{m+\alpha}\circ \hat X_{h,*}^{m+\alpha}
		+(\nabla a^{m+\alpha}\circ \hat X_{h,*}^{m+\alpha})\hat v_{h,*}^{m}\bigr],\notag\\
		V_{\alpha,\theta}&=\tau(1-I_h)
		\bigl[(\partial_ta)^{m+\alpha}\circ \hat X_{h,*}^{m+\alpha}
		+(\nabla a^{m+\alpha}\circ \hat X_{h,*}^{m+\alpha})\hat v_{h,*}^{m}\bigr]. \notag
%		\label{bhk1:V-formulas}
	\end{align}
	In these chain rules, \(\nabla\) denotes differentiation in the ambient Euclidean space.
	
	Let
	\begin{align}
		f = (\partial_ta)^{m+\alpha}\circ \hat X_{h,*}^{m+\alpha}
		+(\nabla a^{m+\alpha}\circ \hat X_{h,*}^{m+\alpha})\hat v_{h,*}^{m} .
	\end{align}
	Since \(\nabla_\Ghso^2 \hat X_{h,*}^{m+\alpha}=0\) and
	\(\nabla_\Ghso^2 \hat v_{h,*}^{m}=0\) on each element of the reference domain $\Ghso$,
	its second derivative is
	\begin{align*}
		\nabla_\Ghso^2 f={}&\nabla^2\partial_ta(t_{m}+\alpha\tau,\hat X_{h,*}^{m+\alpha})[\nabla_\Ghso \hat X_{h,*}^{m+\alpha},\nabla_\Ghso \hat X_{h,*}^{m+\alpha}]
		\\
		&+2\nabla^2a(t_{m}+\alpha\tau,\hat X_{h,*}^{m+\alpha})[\nabla_\Ghso \hat X_{h,*}^{m+\alpha},\nabla_\Ghso\hat v_{h,*}^{m}]\\
		&+\nabla^3a(t_{m}+\alpha\tau,\hat X_{h,*}^{m+\alpha})[\nabla_\Ghso \hat X_{h,*}^{m+\alpha},\nabla_\Ghso \hat X_{h,*}^{m+\alpha},\hat v_{h,*}^{m}]
		,
	\end{align*}
	where all coefficients are evaluated at time \(t_{m}+\alpha\tau\).
	
	Estimate \eqref{eq:X-hat-disp-W1inf} and the triangle inequality give $\| \hat X_{h,*}^{m+\alpha} \|_{W^{1,\infty}(\Ghso)}\leq C_\kl (1+\tau) \lesssim 1$.
	Consequently,
	\(\|f\|_{H_h^2(\Ghso)}\lesssim1+\|\hat v_{h,*}^{m}\|_{H^1(\Ghso)}\), and the interpolation error estimates in Lemma~\ref{lemma:Ih} yield \eqref{bhk1:Vmixed-bound}.
\end{proof}

Since $V_\theta$ and $V_{\alpha,\theta}$ are defined using $(1-I_h)$, they satisfy the following local integration by parts identities; see also Lemma~\ref{Lemma-GLW}.
\begin{lemma}
	\label{bhk1:velocities-super}
	For any piecewise smooth function $f$ on $\Ghso$, the following local integration by parts identities hold:
	\begin{align}
		\int_\Ghso \nabla_\Ghso V_\theta f &=-\int_\Ghso V_\theta \nabla_\Ghso f ,
%		\label{bhk1:Vtheta-bound-super}
		\notag
		\\
		\int_\Ghso \nabla_\Ghso V_{\alpha,\theta} f &=-\int_\Ghso V_{\alpha,\theta} \nabla_\Ghso f . \notag
%		\label{bhk1:Vmixed-bound-super}
	\end{align}
\end{lemma}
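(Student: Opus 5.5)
The plan is to reduce both identities to the one-dimensional fundamental theorem of calculus on each reference edge. The key fact is that $V_\theta$ and $V_{\alpha,\theta}$ vanish at every node of $\Ghso$.

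\textbf{Step 1: vanishing at the nodes.} By the formulas in the proof of Lemma~\ref{bhk1:velocities},
\[
V_\theta=(1-I_h)\big(a^{m+\alpha}\circ\hat X_{h,*}^{m+\alpha}\big),
\qquad
V_{\alpha,\theta}=\tau(1-I_h)f .
\]
Both are images under $(1-I_h)$ of functions that are continuous on $\Ghso$ and smooth on each element. Since $I_h$ reproduces nodal values, $(1-I_h)g$ vanishes at every vertex of $\Ghso$ for any such $g$. Hence $V_\theta(p)=V_{\alpha,\theta}(p)=0$ for all $p\in\mathcal N(\Ghso)$.

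\textbf{Step 2: integration by parts on each edge.} Fix an element $K_{\rm f}^0\in\mathcal T_{h,{\rm f}}^0$. It is a straight segment with unit tangent $t_K$ and endpoints $p_K^\pm$, so its tangential gradient is $\nabla_\Ghso=t_K\,\partial_s$ in arclength $s$. For $W\in\{V_\theta,V_{\alpha,\theta}\}$ and $f$ smooth on the closed element, the one-dimensional integration by parts gives
\[
\int_{K_{\rm f}^0}\partial_s W\, f
=-\int_{K_{\rm f}^0}W\,\partial_s f+\big[Wf\big]_{p_K^-}^{p_K^+} .
\]
The boundary term vanishes by Step 1. Here $f$ may carry a one-sided trace at each endpoint, but this does not matter because $W$ is zero there. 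Multiplying by the constant vector $t_K$ gives the identity elementwise.

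\textbf{Step 3: summation.} Summing over all elements yields both identities, with the integrands read componentwise (or in whatever tensor contraction is intended, since $t_K$ is constant per element). Piecewise smoothness of $f$ is all that is needed, because no interelement continuity of $f$ is used.

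There is no genuine obstacle. The only point requiring care is that $\hat X_{h,*}^{m+\alpha}$ is only piecewise affine, so $a^{m+\alpha}\circ\hat X_{h,*}^{m+\alpha}$ and $f$ are only piecewise smooth; this is why the argument must be carried out edge by edge rather than globally. We also need the composition to be well defined, i.e.\ $\hat\Gamma_{h,*}^{m+\alpha}\subset U_\delta(\Gamma^{m+\alpha})$. This follows from \eqref{eq:X-hat-disp-W1inf} together with $\hat\Gamma_{h,*}^{m}$ lying within $O(h^2)$ of $\Gamma^m$ and the smoothness of the flow in time.
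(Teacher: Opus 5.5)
Your proof is correct and follows the same approach as the paper. The paper justifies the lemma only by noting that $V_\theta$ and $V_{\alpha,\theta}$ are images under $(1-I_h)$; as you show, they therefore vanish at every node, so edgewise integration by parts on the straight elements of $\Ghso$ produces neither boundary nor curvature terms. One small point: ``piecewise smooth'' must be read elementwise with respect to $\mathcal T_{h,{\rm f}}^0$, since breakpoints inside an element would create jump terms; you implicitly assume this when you take $f$ smooth on each closed element.
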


\subsection{Estimates for the discrete time derivative of $\mathscr D_{22}^m$, $\mathscr D_{23}^m$ and $\mathscr D_{13}^m$}

Let
\begin{equation}
	F(\alpha,\theta):=
	\int_{\hat\Gamma_{h,*}^{m+\alpha,\theta}}
	\nabla_{\hat\Gamma_{h,*}^{m+\alpha,\theta}}\id\cdot
	\nabla_{\hat\Gamma_{h,*}^{m+\alpha,\theta}}\phi_h^{\alpha,\theta}. \notag
%	\label{bhk1:F}
\end{equation}
The corner relations \eqref{bhk1:corners} and the fundamental theorem of calculus give
\begin{align}
	&\delta_\tau(\mathscr D_{22}^m+\mathscr D_{23}^m)(\phi_h):=\frac{(\mathscr D_{22}^{m+1}(\phi_h)+\mathscr D_{23}^{m+1}(\phi_h)) - (\mathscr D_{22}^m(\phi_h)+\mathscr D_{23}^m(\phi_h))}{\tau}
	\notag\\
	&=\tau^{-1}(F(1,0)-F(1,1))-\tau^{-1}(F(0,0)-F(0,1))
	=-\frac{1}{\tau}\int_0^1\int_0^1 F_{\alpha\theta}(\alpha,\theta)
	\,\d\theta\,\d\alpha.
	\label{bhk1:rectangle}
\end{align}
\begin{lemma}
	\label{bhk1:geometric-estimate}
	The sum of $\mathscr D_{22}^m$ and $\mathscr D_{23}^m$ satisfies the following discrete time-derivative estimate:
	\begin{equation}
		|\delta_\tau(\mathscr D_{22}^m+\mathscr D_{23}^m)(\phi_h)|
		\lesssim h^2
		(1+\|\hat v_{h,*}^{m}\|_{H^1(\Ghsm)})
		\|\phi_h\|_{H^1(\Ghsm)}.
		\label{bhk1:geo-estimate}
	\end{equation}
\end{lemma}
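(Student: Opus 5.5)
By \eqref{bhk1:rectangle}, it suffices to show
\[
|F_{\alpha\theta}(\alpha,\theta)| \lesssim \tau h^2 \bigl(1+\|\hat v_{h,*}^m\|_{H^1(\Ghsm)}\bigr)\|\phi_h\|_{H^1(\Ghsm)}
\]
uniformly in $(\alpha,\theta)\in[0,1]^2$. To compute $F_{\alpha\theta}$, I would pull $F$ back to the fixed reference curve $\Ghso$. Write $X=\hat X_{h,*}^{m+\alpha,\theta}$ and let $s$ be the arclength of $\Ghso$, elementwise. Since $\phi_h^{\alpha,\theta}\circ X=\phi_h^0$ is independent of $(\alpha,\theta)$, we get
\[
F(\alpha,\theta)=\int_\Ghso \frac{\partial_s X\cdot \partial_s\phi_h^0}{|\partial_s X|}\,\d s .
\]
This holds because on a curve $\nabla_\Gamma{\rm id}\cdot\nabla_\Gamma\phi=\partial_\sigma{\rm id}\cdot\partial_\sigma\phi$, with $\sigma$ the arclength of the image. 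So $F=\int_\Ghso G(\partial_sX)\cdot\partial_s\phi_h^0$ with $G(w)=w/|w|$, which is smooth near $|w|\sim 1$; the required lower bound on $|w|$ follows from $\kappa_l$ and \eqref{bhk1:Vtheta-bound}.

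Differentiating twice gives
\[
F_{\alpha\theta}=\int_\Ghso \Bigl(DG(\partial_sX)[\partial_sV_{\alpha,\theta}]+D^2G(\partial_sX)[\partial_sV_\alpha,\partial_sV_\theta]\Bigr)\cdot\partial_s\phi_h^0 .
\]
The second term is bounded by H\"older:
\[
\|\nabla V_\theta\|_{L^\infty}\,\|\nabla V_\alpha\|_{L^2}\,\|\nabla\phi_h^0\|_{L^2}\lesssim h\,\tau(1+\|\hat v\|_{H^1})\,\|\phi_h\|_{H^1},
\]
which is only $O(\tau h)$. The first term, estimated directly with \eqref{bhk1:Vmixed-bound}, is also only $O(\tau h)$. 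Recovering the extra factor of $h$ is therefore the main obstacle.

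My plan is to use the local integration by parts of Lemma~\ref{bhk1:velocities-super}. For the first term, since $V_{\alpha,\theta}$ vanishes at nodes, integrate by parts elementwise:
\[
\int DG(\partial_sX)[\partial_sV_{\alpha,\theta}]\cdot\partial_s\phi_h^0=-\int V_{\alpha,\theta}\cdot\partial_s\bigl(DG(\partial_sX)^\top\partial_s\phi_h^0\bigr).
\]
Here $\partial_s^2\phi_h^0=0$ on each element, so the derivative lands only on $DG(\partial_sX)$, producing $\partial_s^2X=\theta\,\partial_s^2[a^{m+\alpha}\circ\hat X_{h,*}^{m+\alpha}]$, which is bounded in $L^\infty$. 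This gives the bound $\|V_{\alpha,\theta}\|_{L^2}\|\phi_h\|_{H^1}\lesssim\tau h^2(1+\|\hat v\|_{H^1})\|\phi_h\|_{H^1}$. For the second term, I would apply the same trick to $V_\theta$, which is also a $(1-I_h)$ quantity and integrates by parts elementwise:
\[
\int D^2G[\partial_sV_\alpha,\partial_sV_\theta]\cdot\partial_s\phi_h^0=-\int V_\theta\cdot\partial_s\bigl(D^2G(\partial_sX)[\partial_sV_\alpha,\cdot\,]^\top\partial_s\phi_h^0\bigr).
\]
The derivative now falls on $\partial_sX$ (bounded) or on $\partial_sV_\alpha$, giving the elementwise $\nabla^2V_\alpha$ in $L^2$. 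With $\|V_\theta\|_{L^\infty}\lesssim h^2$ and \eqref{bhk1:Valpha-bound}, this term is $\lesssim h^2\tau(1+\|\hat v\|_{H^1})\|\phi_h\|_{H^1}$. This is precisely why Lemma~\ref{bhk1:velocities} includes the broken second-derivative bound on $V_\alpha$.

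Two points need care. First, the integration by parts in Lemma~\ref{bhk1:velocities-super} requires the boundary terms to vanish, which holds because $V_\theta$ and $V_{\alpha,\theta}$ vanish at element endpoints. Second, the coefficient functions must be piecewise smooth with derivatives bounded uniformly in $\theta$; this follows from \eqref{eq:X-hat-disp-W1inf} and the smoothness of $a$. Finally, I would convert norms on $\Ghso$ to norms on $\Ghsm$ using $\kappa_l$, integrate in $(\alpha,\theta)$, and divide by $\tau$ to obtain \eqref{bhk1:geo-estimate}.
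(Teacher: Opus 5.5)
Your proposal is correct and follows essentially the same route as the paper. Both arguments pull $F_{\alpha\theta}$ back to $\Ghso$ and split it into a $\nabla V_{\alpha,\theta}$ term and a $\nabla V_\alpha\otimes\nabla V_\theta$ term. Both then use the elementwise integration by parts of Lemma~\ref{bhk1:velocities-super}, together with the vanishing of second derivatives of linear finite element functions, to move the derivative off $V_{\alpha,\theta}$ and $V_\theta$; this brings in $\nabla^2 X$ and the broken $\nabla^2 V_\alpha$ bound. Your explicit formula $F=\int_{\Ghso}|\partial_sX|^{-1}\partial_sX\cdot\partial_s\phi_h^0$ just makes concrete the generic functions $F_1,\dots,F_5$ in the paper's proof. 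Your remark on the lower bound for $|\partial_sX|$ fills in a nondegeneracy point that the paper leaves implicit.
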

\begin{proof}
	On the reference domain $\Ghso$, the calculus identities in Lemma~\ref{lemma:ud} show that differentiation with respect to $\alpha$ or $\theta$ introduces the linear dependence on the velocity gradient $\nabla_\Ghso V_{\alpha}$ or $\nabla_\Ghso V_{\theta}$. Mixed differentiation $\partial_{\alpha,\theta}$ therefore produces terms that depend linearly on either $\nabla_\Ghso V_{\alpha,\theta}$ or $\nabla_\Ghso V_{\alpha}\otimes \nabla_\Ghso V_{\theta}$. Thus $F_{\alpha\theta}$ has the form
	\begin{align}
		F_{\alpha\theta}
		&=
		\int_\Ghso
		\Big(
		F_1(\hat X_{h,*}^{m+\alpha,\theta}, \nabla_\Ghso \hat X_{h,*}^{m+\alpha,\theta}, \nabla_\Ghso V_{\alpha,\theta}\otimes\nabla_\Ghso \phi_h^0)
		\notag\\
		&\quad+
		F_2(\hat X_{h,*}^{m+\alpha,\theta}, \nabla_\Ghso \hat X_{h,*}^{m+\alpha,\theta}, \nabla_\Ghso V_\theta\otimes \nabla_\Ghso V_\alpha\otimes\nabla_\Ghso \phi_h^0)
		\Big)
		\notag ,
	\end{align}
	where $F_1$ and $F_2$ are piecewise smooth in their first two arguments and linear in the third.
	Using this linearity and the fact that the second derivatives of linear finite element functions vanish elementwise, we apply Lemma~\ref{bhk1:velocities-super} to transfer the derivatives from $V_{\alpha\theta}$ and $V_\theta$ to the remaining factors without boundary terms:
	\begin{align}
		F_{\alpha\theta}
		&=
		\int_\Ghso
		\Big(
		F_3(\hat X_{h,*}^{m+\alpha,\theta}, \nabla_\Ghso \hat X_{h,*}^{m+\alpha,\theta}, \nabla_\Ghso^2 \hat X_{h,*}^{m+\alpha,\theta}, V_{\alpha,\theta}\otimes\nabla_\Ghso \phi_h^0)
		\notag\\
		&\quad+
		F_4(\hat X_{h,*}^{m+\alpha,\theta}, \nabla_\Ghso \hat X_{h,*}^{m+\alpha,\theta}, \nabla_\Ghso^2 \hat X_{h,*}^{m+\alpha,\theta}, V_\theta\otimes \nabla_\Ghso V_\alpha\otimes\nabla_\Ghso \phi_h^0)
		\notag\\
		&\quad+
		F_5(\hat X_{h,*}^{m+\alpha,\theta}, \nabla_\Ghso \hat X_{h,*}^{m+\alpha,\theta}, V_\theta\otimes \nabla_\Ghso^{2} V_\alpha\otimes\nabla_\Ghso \phi_h^0)
		\Big)
		\notag ,
	\end{align}
	where $F_3$, $F_4$ and $F_5$ are linear in their last argument and smooth in the remaining arguments.
	Using the bound $\| \hat X_{h,*}^{m+\alpha} \|_{W^{1,\infty}(\Ghso)}\leq C_\kl (1+\tau) \lesssim 1$, we obtain
	\begin{align}
		|\delta_\tau(\mathscr D_{22}^m+\mathscr D_{23}^m)(\phi_h)|
		&\lesssim
		\tau^{-1}
		\| V_{\alpha,\theta} \|_{L^2(\Ghso)}
		\| \nabla_\Ghso \phi_h\|_{L^2(\Ghso)}
		\notag\\
		&\quad+
		\tau^{-1}
		\| \nabla_\Ghso V_{\alpha} \|_{L^2(\Ghso)}
		\| V_{\theta} \|_{L^{\infty}(\Ghso)}
		\| \nabla_\Ghso \phi_h\|_{L^2(\Ghso)}
		\notag\\
		&\quad+
		\tau^{-1}
		\| \nabla_\Ghso^{2} V_{\alpha} \|_{L^2(\Ghso)}
		\| V_{\theta} \|_{L^{\infty}(\Ghso)}
		\| \nabla_\Ghso \phi_h\|_{L^2(\Ghso)}
		\notag\\
		&\lesssim h^2
		(1+\|\hat v_{h,*}^{m}\|_{H^1(\Ghsm)})
		\| \nabla_\Ghsm \phi_h\|_{L^2(\Ghsm)}.
		\notag
	\end{align}
\end{proof}

\begin{lemma}
	\label{bhk1:D13-estimate}
	The discrete time derivative of $\mathscr D_{13}^m$ satisfies
	\begin{equation}
		|\delta_\tau\mathscr D_{13}^m(\phi_h)|
		\lesssim h^2(1+\|\hat v_{h,*}^{m}\|_{H^1(\Ghsm)})
		\|\phi_h\|_{H^1(\Ghsm)}.
		\label{bhk1:D13-bound}
	\end{equation}
\end{lemma}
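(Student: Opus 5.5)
We mirror the proof of Lemma~\ref{bhk1:geometric-estimate}, now for the quadrature-error functional
\[
\mathscr D_{13}^m(\phi_h)=-\Big(\int_{\Ghsm}^h-\int_{\Ghsm}\Big)(H^mn^m)^{-\ell}\cdot\phi_h .
\]
The bihomotopy of Section~\ref{sec:bihomo} does not suit this functional directly, because mass lumping is only defined on polygonal curves and the corners at $\theta=1$ are smooth. Since the integrand is already a smooth function composed with the projection, only the parameter $\alpha$ is needed. We write
\[
G(\alpha):=-\Big(\int_{\hat\Gamma_{h,*}^{m+\alpha}}^h-\int_{\hat\Gamma_{h,*}^{m+\alpha}}\Big)\big[(Hn)^{m+\alpha}\circ a^{m+\alpha}\big]\cdot\phi_h^{\alpha},
\]
where $\hat\Gamma_{h,*}^{m+\alpha}$ is the linearly interpolated polygon parametrized by $\hat X_{h,*}^{m+\alpha}$, with transport velocity $\hat v_{h,*}^m$, and $\phi_h^\alpha$ is transported by its nodal values, so $\mda\phi_h^\alpha=0$. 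At $\alpha=0,1$ the vertices lie on $\Gamma^{m}$ and $\Gamma^{m+1}$, so $G(0)=\mathscr D_{13}^m(\phi_h)$ and $G(1)=\mathscr D_{13}^{m+1}(\phi_h)$. Hence
\[
\delta_\tau\mathscr D_{13}^m(\phi_h)=\tau^{-1}\int_0^1G'(\alpha)\,\d\alpha ,
\]
and it suffices to show $|G'(\alpha)|\lesssim \tau h^2(1+\|\hat v_{h,*}^m\|_{H^1(\Ghsm)})\|\phi_h\|_{H^1(\Ghsm)}$ uniformly in $\alpha$.

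We pull back to $\Ghso$. Elementwise, the quadrature error equals $\int_{K_{\rm f}^0}(1-I_h)[f_\alpha\cdot\phi_h^0]\,J_\alpha$, where $J_\alpha=|\nabla_\Ghso\hat X_{h,*}^{m+\alpha}|$ is elementwise constant and $f_\alpha:=(Hn)^{m+\alpha}\circ a^{m+\alpha}\circ\hat X_{h,*}^{m+\alpha}$. Differentiating in $\alpha$ gives two groups of terms. The first is $\partial_\alpha J_\alpha=J_\alpha\,\nabla_{\hat\Gamma_{h,*}^{m+\alpha}}\cdot\hat v_{h,*}^m$, which is elementwise constant; it multiplies the same quadrature error of $f_\alpha\cdot\phi_h^0$, again with a constant weight. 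The second is $\partial_\alpha f_\alpha=\tau\,[\partial_t(Hn\circ a)+\nabla(Hn\circ a)\,\hat v_{h,*}^m]\circ\hat X_{h,*}^{m+\alpha}$, which replaces $f_\alpha$ inside $(1-I_h)$. In both groups we use the elementwise estimate of Lemma~\ref{lemma:super_conv2}: for $\phi_h$ affine on $K$,
\[
\Big|\int_K(1-I_h)(g\phi_h)\Big|\lesssim h^2\|g\|_{H^2(K)}\|\phi_h\|_{H^1(K)} .
\]
The factor $\phi_h$ contributes no second derivative, and $\phi_h$ can be split into its element mean plus a linear part. For the first group, a constant weight $w_K$ gives
\[
\sum_K |w_K|\,h^2\|f_\alpha\|_{H^2(K)}\|\phi_h\|_{H^1(K)}\lesssim \tau h^2\|\nabla\hat v_{h,*}^m\|_{L^2}\|\phi_h\|_{H^1} .
\]
This step uses $\|f_\alpha\|_{W^{2,\infty}}\lesssim 1$ elementwise, which holds because $\hat X_{h,*}^{m+\alpha}$ is affine and $\|\hat X_{h,*}^{m+\alpha}\|_{W^{1,\infty}}\lesssim1$ by \eqref{eq:X-hat-disp-W1inf}. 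The sum is then handled by Cauchy--Schwarz over elements with $w_K$ in $\ell^2$. For the second group, we need $\|\partial_\alpha f_\alpha\|_{H_h^2}\lesssim \tau(1+\|\hat v_{h,*}^m\|_{H^1})$; this follows exactly as for $f$ in the proof of Lemma~\ref{bhk1:velocities}, since $\nabla^2_\Ghso\hat v_{h,*}^m=0$ elementwise. Finally, norm equivalence between $\hat\Gamma_{h,*}^{m+\alpha}$, $\Ghso$ and $\Ghsm$ (Lemma~\ref{lemma:norm-equiv} together with \eqref{eq:X-hat-disp-W1inf}) converts all norms to $\Ghsm$.

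The main obstacle is the first group. Pairing the constant weight $\nabla\cdot\hat v_{h,*}^m$ with the $L^\infty$-type bound $h^2\|f_\alpha\|_{W^{2,\infty}}$ gives $\|\nabla\hat v\|_{L^2}$ only if we keep $\ell^2$ summation over elements, so we would use $\|f_\alpha\|_{H^2(K)}\lesssim h^{1/2}$ and $\|\phi_h\|_{H^1(K)}$ elementwise before summing. A second concern is the $L^2$ control of $\nabla\hat v_{h,*}^m$ in the term $\nabla(Hn\circ a)\hat v$: its second elementwise derivative brings in $\nabla\hat v\otimes\nabla\hat X$, but only linearly, so $H^1$ control of $\hat v_{h,*}^m$ suffices.
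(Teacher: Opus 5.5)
Your proposal is correct and follows essentially the paper's route. Like the paper, you use a one-parameter $\alpha$-homotopy over the linearly interpolated polygons $\hat\Gamma_{h,*}^{m+\alpha}$ with a nodally transported test function, and differentiating the quadrature error produces the $(\partial_t f+\nabla f\,\hat v_{h,*}^m)$ and $f\,\nabla_{\hat\Gamma_{h,*}^{m+\alpha}}\cdot\hat v_{h,*}^m$ terms (the paper obtains these via the transport formula of Lemma~\ref{lemma:ud}, item 6), which are then closed by an $O(h^2)$ quadrature/interpolation bound, H\"older, and norm equivalence. Your elementwise pull-back with $\ell^2$ summation just makes the paper's sketch explicit, but note that $\partial_\alpha J_\alpha=\tau J_\alpha\nabla_{\hat\Gamma_{h,*}^{m+\alpha}}\cdot\hat v_{h,*}^m$ carries a factor $\tau$ that is missing from your formula (you do restore it in the subsequent bound).
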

\begin{proof}
	Consider
	\begin{align}
		f(t_m+\alpha\tau,{\rm id}_{\hat \Gamma_{h,*}^{m+\alpha}}):=H_*(t_m+\alpha\tau,{\rm id}_{\hat \Gamma_{h,*}^{m+\alpha}})
		n_*(t_m+\alpha\tau,{\rm id}_{\hat \Gamma_{h,*}^{m+\alpha}}). \notag
%		\label{bhk1:curvature-extension}
	\end{align}
	Applying the calculus identities in Lemma~\ref{lemma:ud}, we obtain
	\begin{align}
		\delta_\tau\mathscr D_{13}^m(\phi_h)
		&=-\frac{1}{\tau}\int_0^1 \frac{\d}{\d\alpha} \left(\int_{\hat\Gamma_{h,*}^{m+\alpha}}^{h}-\int_{\hat\Gamma_{h,*}^{m+\alpha}}\right)
		f(t_m+\alpha\tau,{\rm id}_{\hat \Gamma_{h,*}^{m+\alpha}})
		\cdot\phi_h\,\d\alpha
		\notag\\
		&=-\int_0^1\left(\int_{\hat\Gamma_{h,*}^{m+\alpha}}^{h}-\int_{\hat\Gamma_{h,*}^{m+\alpha}}\right)
		\Big(
		\partial_t f
		+
		\nabla f\,\hat v_{h,*}^{m}
		+f\nabla_{\hat\Gamma_{h,*}^{m+\alpha}}\cdot \hat v_{h,*}^{m} \Big)
		\cdot\phi_h\,\d\alpha, \notag
%		\label{bhk1:D13-difference-exact}
	\end{align}
	where the integrand on the right-hand side is evaluated at $t=t_m+\alpha\tau$.
	
	The result follows from the super-approximation estimate in Lemma~\ref{lemma:super_conv}, H\"older's inequality, the smoothness of $f$, and norm equivalence on $\hat\Gamma_{h,*}^{m+\alpha}, \alpha\in[0,1]$.
\end{proof}

\begin{lemma}
	\label{bhk1:main}
	Together with \eqref{eq:X-hat-disp-W1inf}, the preceding estimates yield
	\begin{align}
		|\delta_\tau\mathscr D_{13}^m(\phi_h)|
		&\lesssim h^2
		\|\phi_h\|_{H^1(\Ghsm)},\label{bhk1:dt-D13}\\
		|\delta_\tau\mathscr D_{22}^m(\phi_h) + \delta_\tau\mathscr D_{23}^m(\phi_h)|
		&\lesssim
		h^2
		\|\phi_h\|_{H^1(\Ghsm)}.
		\label{bhk1:dt-D2}
	\end{align}
\end{lemma}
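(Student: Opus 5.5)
The two estimates follow from Lemma~\ref{bhk1:geometric-estimate} and Lemma~\ref{bhk1:D13-estimate} once the factor $1+\|\hat v_{h,*}^{m}\|_{H^1(\Ghsm)}$ is shown to be bounded by a constant. The plan is therefore to prove
\[
\|\hat v_{h,*}^{m}\|_{H^1(\Ghsm)}
=\tau^{-1}\|\hat X_{h,*}^{m+1}-\hat X_{h,*}^{m}\|_{H^1(\Ghsm)}\lesssim 1
\]
and then substitute this bound into \eqref{bhk1:geo-estimate} and \eqref{bhk1:D13-bound}.

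For the velocity bound, we use the $W^{1,\infty}$ displacement estimate \eqref{eq:X-hat-disp-W1inf}, which gives $\|\hat X_{h,*}^{m+1}-\hat X_{h,*}^{m}\|_{W^{1,\infty}(\Ghsm)}\lesssim\tau$. Since $\Ghsm$ is a closed curve, its length is bounded uniformly in $h$, because it is $O(h^2)$-close to $\Gm$ by Lemma~\ref{lemma:Ih} and \eqref{PP}. Hence
\[
\|\cdot\|_{H^1(\Ghsm)}\le |\Ghsm|^{1/2}\|\cdot\|_{W^{1,\infty}(\Ghsm)},
\]
and it follows that $\|\hat v_{h,*}^{m}\|_{H^1(\Ghsm)}\lesssim 1$.

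One consistency check is needed. In Lemma~\ref{bhk1:velocities} the norm of $\hat v_{h,*}^m$ appears on $\Ghso$, whereas in the final bounds it is measured on $\Ghsm$. These two norms are equivalent through the parametrization $\hat X_{h,*}^m$, by the shape-regularity constant $\kappa_l$ in \eqref{PP}. Likewise, the norms of $\phi_h$ on $\Ghso$ and on $\Ghsm$ are equivalent. These equivalences were already used inside the proofs of the two preceding lemmas, so no new ingredient is required.

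There is no substantial obstacle in this argument. The only point that needs care is that \eqref{eq:X-hat-disp-W1inf} is valid under the induction hypothesis \eqref{eq:ind_hypo1} and the step-size condition $\tau\le c_0h^2$. Both were assumed in Section~\ref{sec:disc-est}, and they fix the implied constants uniformly in $m\le l$. Inserting the velocity bound into the two lemmas then gives \eqref{bhk1:dt-D13} and \eqref{bhk1:dt-D2}.
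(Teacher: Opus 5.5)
Your proposal is correct and follows the paper's route: \eqref{eq:X-hat-disp-W1inf} gives $\|\hat v_{h,*}^m\|_{W^{1,\infty}(\Ghsm)}\lesssim 1$, hence $\|\hat v_{h,*}^m\|_{H^1(\Ghsm)}\lesssim 1$, and inserting this into Lemmas~\ref{bhk1:geometric-estimate} and~\ref{bhk1:D13-estimate} yields both estimates. The bound on the length of $\Ghsm$ is even simpler than you state: its vertices lie on $\Gm$, so as an inscribed polygon its length is at most that of $\Gm$.
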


\subsection{Super-convergence of error velocity}\label{sec:bbd_vel_sup}

We now identify the summation-by-parts structure obtained by testing the error equation \eqref{eq:err_eq2} with the normal error velocity $\phi_h = I_h\Nbhsm\dtem$. This structure underlies the super-convergence estimates in Section~\ref{sec:main-err}.

Taking $\phi_h = I_h\Nbhsm\dtem$ in \eqref{eq:err_eq2} gives
\begin{align}\label{eq:err2}
	& \int_{\hat\Gamma_{h, *}^{m}}^h \Big( \frac{e_h^{m+1}-\hat e_h^m}{\tau} -  I_h \Tsm v^m\Big) \cdot \bar n_{h, *}^m\, \dtem\cdot \bar n_{h, *}^m \notag\\
	&= - \int_{\hat\Gamma_{h, *}^{m}}^h I_h\Tsm v^m \cdot \bar n_{h, *}^m \,   \dtem \cdot \bar n_{h, *}^m \notag\\
	&\quad + \int_{\hat\Gamma_{h, *}^{m}}^h \frac{e_h^{m+1}-\hat e_h^m}{\tau} \cdot \bar n_{h, *}^m\, \Big(\frac{e_h^{m+1}-\hat e_h^m}{\tau} - I_h\Tsm v^m\Big)\cdot \bar n_{h, *}^m \notag\\
	&= - \int_{\hat\Gamma_{h, *}^{m}}^h I_h\Tsm v^m \cdot \bar n_{h, *}^m \,   \dtem \cdot \bar n_{h, *}^m \notag\\
	&\quad - \mathscr D^m( I_h\Nbhsm \dtem) - \mathscr J^m( I_h\Nbhsm \dtem) - \mathscr B^m(\ehm,  I_h\Nbhsm \dtem) - \mathscr K^m( I_h\Nbhsm \dtem)  \notag\\
	&\quad
	-
	\mathscr A_{h,  *}^N\Big(e_h^{m+1},  I_h\Nbhsm \Big(\frac{e_h^{m+1}-\hat e_h^m}{\tau} - I_h\Tsm v^m\Big)\Big)
	\notag\\
	&\quad
	-
	\mathscr A_{h, *}^T\Big(e_h^{m+1} - \hat{e}_h^m,  I_h\Nbhsm \Big(\frac{e_h^{m+1}-\hat e_h^m}{\tau} - I_h\Tsm v^m\Big)\Big)  \notag\\
	&\quad- \sum_{i=1}^3 \mathscr F_i^m( I_h\Nbhsm \dtem) + \mathscr A_{h, *}^N(\hat e_h^{m}, I_h \bar T_{h,*}^m  I_h\Nbhsm \dtem)
	\notag\\
	&\quad
	+ \mathscr B^m(\hat e_h^{m}, I_h \bar T_{h,*}^m  I_h\Nbhsm \dtem) + \mathscr Q^m(I_h \bar T_{h,*}^m  I_h\Nbhsm \dtem)
	.
\end{align}
The first term on the right-hand side has been estimated in \eqref{eq:mass-ortho}.
%
%The super-approximation of $\nbhsm$ in Lemma~\ref{lemma:n_bar_app} gives
%\begin{align}
%	\Big|\int_{\hat\Gamma_{h, *}^{m}}^h \dtem \cdot \bar n_{h, *}^m\,  I_h\Tsm v^m \cdot \bar n_{h, *}^m \Big|
%	\lesssim h^{\r 2} \| \dtem \cdot \bar n_{h, *}^m \|_{L^2_h(\Ghsm)} .
%\end{align}
For notational convenience, throughout the remainder of this section,
we use the abbreviations
\begin{align*}
	&I_h\NsM\phi_h
	:=I_h[(\NsM\circ\hat X_{h,*}^{m+1})\phi_h],
	\qquad
	I_h\TsM\phi_h
	:=I_h[(\TsM\circ\hat X_{h,*}^{m+1})\phi_h],\\
	&I_h\Nsm\phi_h
	:=I_h[(\Nsm\circ\hat X_{h,*}^{m})\phi_h],
	\qquad
	I_h\Tsm\phi_h
	:=I_h[(\Tsm\circ\hat X_{h,*}^{m})\phi_h].
\end{align*}
For the $\mathscr A_{h,  *}^N$ term, we use the decomposition
\begin{align}
	&\quad \mathscr A_{h,  *}^N\Big(e_h^{m+1},  I_h\Nbhsm\dtem\Big)
	\notag\\
	&=
	\mathscr A_{h, *}\Big(I_h\Nsm e_h^{m+1}, I_h\Nsm\dtem\Big)
	\notag\\
	&\quad
	-
	\mathscr A_{h,  *}^T\Big(I_h\Nsm e_h^{m+1}, I_h\Nsm \dtem\Big)
	\notag\\
	&\quad
	+
	\mathscr A_{h,  *}^N\Big(I_h\Tsm  e_h^{m+1}, I_h\Nsm \dtem\Big)
	\notag\\
	&\quad
	+
	\mathscr A_{h,  *}^N\Big(e_h^{m+1}, I_h(\Nbhsm-\Nsm)\dtem\Big)
	\notag\\
	&=
	\mathscr A_{h,  *}\Big(I_h\NsM e_h^{m+1}, \frac{I_h\NsM\eM - I_h\Nsm e_h^m}{\tau}\Big)
	\notag\\
	&\quad-
	\mathscr A_{h,  *}\Big(I_h(\NsM-\Nsm)e_h^{m+1}, \frac{I_h\NsM\eM - I_h\Nsm e_h^m}{\tau}\Big)
	\notag\\
	&\quad
	-
	\mathscr A_{h,  *}\Big(I_h\Nsm e_h^{m+1}, \frac{I_h(\NsM-\Nsm)\eM }{\tau}\Big)
	\notag\\
	&\quad+
	\mathscr A_{h,  *}\Big( I_h\Nsm e_h^{m+1}, I_h\Nsm\Big(\frac{e_h^m -\ehm}{\tau}-I_h\Tsm v^m\Big)\Big)
	\notag\\
	&\quad
	-
	\mathscr A_{h,  *}^T\Big(I_h\Nsm e_h^{m+1}, I_h\Nsm \dtem\Big)
	\notag\\
	&\quad
	+
	\mathscr A_{h,  *}^N\Big(I_h\Tsm  e_h^{m+1}, I_h\Nsm \dtem\Big)
	\notag\\
	&\quad
	+
	\mathscr A_{h,  *}^N\Big(e_h^{m+1}, I_h(\Nbhsm-\Nsm)\dtem\Big)
	\notag\\
	&=:
	\mathscr A_{h,  *}\Big(I_h\NsM e_h^{m+1}, \frac{I_h\NsM\eM - I_h\Nsm e_h^m}{\tau}\Big)
	+
	\sum_{i=1}^6 A_i^m  .
	\notag
\end{align}
The first term on the right-hand side can be estimated using Young's inequality:
\begin{align}
	&\quad \mathscr A_{h,  *}\Big(I_h\NsM e_h^{m+1}, \frac{I_h\NsM\eM - I_h\Nsm e_h^m}{\tau}\Big)
	\notag\\
	&\geq
	\frac{\| \nabla_\GhsM I_h\NsM e_h^{m+1} \|_{L^2(\GhsM)}^2 - \| \nabla_\Ghsm I_h\Nsm e_h^{m} \|_{L^2(\Ghsm)}^2}{2\tau}
	\notag\\
	&\quad-
	\frac{ \| \nabla_\GhsM I_h\NsM e_h^{m+1} \|_{L^2(\GhsM)}^2 - \| \nabla_\Ghsm I_h\NsM e_h^{m+1} \|_{L^2(\Ghsm)}^2 }{2\tau}
	\notag\\
	&=:
	\frac{\| \nabla_\GhsM I_h\NsM e_h^{m+1} \|_{L^2(\GhsM)}^2 - \| \nabla_\Ghsm I_h\Nsm e_h^{m} \|_{L^2(\Ghsm)}^2}{2\tau}
	+
	A_7^m .
	\notag
\end{align}
We also set
\begin{align}
	A_8^m :=  \mathscr A_{h, *}^T\Big(e_h^{m+1} - \hat{e}_h^m,  I_h\Nbhsm \Big(\frac{e_h^{m+1}-\hat e_h^m}{\tau} - I_h\Tsm v^m \Big)\Big) .
	\notag
\end{align}
Estimates \eqref{eq:X-hat-disp-W1inf} and \eqref{eq:X-hat-disp-W1inf1} show that the changes from $\Nsm$ to $\NsM$ and from $\Ghsm$ to $\GhsM$ contribute a factor $\tau$. Hence
\begin{align}
	| A_1^m |
	&\lesssim
	\| \eM \|_{H^1(\Ghsm)} (\| \eM \|_{H^1(\Ghsm)} + \| \eem \|_{H^1(\Ghsm)}) ,
	\notag\\
	| A_2^m |
	&\lesssim
	\| \eM \|_{H^1(\Ghsm)} (\| \eM \|_{H^1(\Ghsm)} + \| \eem \|_{H^1(\Ghsm)}) ,
	\notag\\
	| A_7^m |
	&\lesssim
	\| \eM \|_{H^1(\Ghsm)}^2
	\notag .
\end{align}
Cancellation estimates in Lemma~\ref{lemma:AT-sup} give
\begin{align}
	|A_4^m|
	&\lesssim
	\| \eM \|_{H^1(\Ghsm)}  \| I_h\Nsm\dtem \|_{L^2(\Ghsm)}
	\notag\\
	|A_5^m|
	&\lesssim
	\| \eM \|_{H^1(\Ghsm)}  \| I_h\Nsm\dtem \|_{L^2(\Ghsm)}
	\notag\\
%	|A_6^m|
%	&\lesssim
%	h \| \eM \|_{H^1(\Ghsm)}  \| \dtem \|_{L^2(\Ghsm)}
%	\notag\\
	|A_8^m|
	&\lesssim
	(\| \eM \|_{H^1(\Ghsm)} + \| \ehm \|_{H^1(\Ghsm)}) \| I_h\Nbhsm\dtem \|_{L^2(\Ghsm)}
	\notag ,
\end{align}
and Lemma \ref{lemma:n_bar_app} and the inverse inequality yield
\begin{align}
	|A_6^m|
	\lesssim
	h \| \eM \|_{H^1(\Ghsm)}  \| \dtem \|_{L^2(\Ghsm)}
	\notag .
\end{align}
The most crucial term $A_3^m$ is controlled by Lemma~\ref{lemma:corrected-displacement-nodal}, proved below:
\begin{align}
	|A_3^m|
	&\lesssim
	\tau \| \eM \|_{H^1(\Ghsm)} .
	\notag
\end{align}
To express the upper bounds for $|A_4^m|$ and $|A_5^m|$ in terms of the discrete normal component, we use Lemma~\ref{lemma:n_bar_app}, the velocity estimate \eqref{eq:vel-est-L2}, and the induction hypothesis \eqref{eq:ind_hypo1} to obtain
\begin{align}
	\| I_h\Nsm\dtem \|_{L^2(\Ghsm)}
	&\leq
	\| I_h\Nbhsm\dtem \|_{L^2(\Ghsm)}
	+
	\| I_h(\Nbhsm-\Nsm)\dtem \|_{L^2(\Ghsm)}
	\notag\\
	&\lesssim
	\| I_h\Nbhsm\dtem \|_{L^2(\Ghsm)}
	+
	h^2
	\| \dtem \|_{L^2(\Ghsm)}
	\notag\\
	&\lesssim
	\| I_h\Nbhsm\dtem \|_{L^2(\Ghsm)}
	+
	h(\tau+h^2)
	+
	h
	\| \nabla_\Ghsm \ehm \|_{L^2(\Ghsm)}
	+
	h^{-1}
	\| \nabla_\Ghsm \ehm \|_{L^2(\Ghsm)}^2
	\notag\\
	&\lesssim
	\| I_h\Nbhsm\dtem \|_{L^2(\Ghsm)}
	+
	h(\tau+h^2)
	+
	h^{3/4}
	\| \nabla_\Ghsm \ehm \|_{L^2(\Ghsm)}
	\notag .
\end{align}
Combining these estimates, we obtain
\begin{align}\label{eq:Aim}
	\Big| \sum_{i=1}^8 A_i^m \Big|
	&\lesssim
	(\tau+h^2)^2
	+
	\epsilon
	\| \dtem \cdot \nbhsm \|_{L_h^2(\Ghsm)}^2
	\notag\\
	&\quad
	+
	\epsilon^{-1}
	\Big(\| \eM \|_{H^1(\Ghsm)}^2
	+
	\| \eem \|_{H^1(\Ghsm)}^2
	+
	\| \ehm \|_{H^1(\Ghsm)}^2\Big)
	.
\end{align}

We next turn to the consistency error and begin with the decomposition
\begin{align}
	\mathscr D^m(I_h\Nbhsm\dtem)
	&=
	\mathscr D^m(I_h\Nbhsm\dtem) - \tilde{\mathscr D}^m(I_h\Nbhsm\dtem)
	\notag\\
	&\quad
	+
	\tilde{\mathscr D}^m(I_h(\Nbhsm-\Nsm)\dtem)
	\notag\\
	&\quad
	+
	\tilde{\mathscr D}^m(I_h\Nsm\dtem)
	\notag\\
	&=:
	D_1^m
	+
	D_2^m
	+
	\tilde{\mathscr D}^m(I_h\Nsm\dtem) . \notag
\end{align}
Estimate \eqref{eq:Dm-tildeDm} gives
\begin{align}
	| D_1^m |
	\lesssim
	(\tau+h^2) \| I_h\Nbhsm\dtem \|_{L^2(\Ghsm)}
	.
	\notag
\end{align}
Estimate \eqref{eq:tildeDm}, the approximation property of $\nbhsm$ in Lemma~\ref{lemma:n_bar_app}, and the velocity estimate \eqref{eq:vel-est-L2} yield
\begin{align}
	| D_2^m |
	\lesssim
	h^3 \| \dtem \|_{L^2(\Ghsm)}
	\lesssim
	h^2(\tau+h^2)
	+
	h^2
	\| \nabla_\Ghsm \ehm \|_{L^2(\Ghsm)}
	+
	\| \nabla_\Ghsm \ehm \|_{L^2(\Ghsm)}^2
	.
	\notag
\end{align}
For the critical term $\tilde{\mathscr D}^m(I_h\Nsm\dtem)$, we isolate a discrete time difference as follows:
\begin{align}
	\tilde{\mathscr D}^m(I_h\Nsm\dtem )&= \tilde{\mathscr D}^m\Big(I_h\Nsm\Big(\frac{e_h^{m+1}-\hat e_h^m}{\tau} - I_h\Tsm v^m\Big) \Big)
	\notag\\
	&
	=
	\frac{\tilde{\mathscr D}^{m+1}(I_h\NsM \hat e_h^{m+1})- \tilde{\mathscr D}^m (I_h\Nsm \hat e_h^m)}{\tau}
	\notag\\
	&\quad-
	\frac{\tilde{\mathscr D}^{m+1}(I_h\NsM \hat e_h^{m+1})- \tilde{\mathscr D}^{m}(I_h\NsM \hat e_h^{m+1})}{\tau}
	\notag\\
	&\quad
	+
	\tilde{\mathscr D}^m\Big(\frac{I_h(\Nsm-\NsM)\hat e_h^{m+1}}{\tau} \Big)
	\notag\\
	&\quad
	+
	\tilde{\mathscr D}^m\Big(I_h\Nsm\Big(\frac{e_h^{m+1}-\hat e_h^{m+1}}{\tau} - I_h\Tsm v^m\Big) \Big)
	\notag\\
	&=:
	\frac{\tilde{\mathscr D}^{m+1}( \hat e_h^{m+1})- \tilde{\mathscr D}^{m} ( \hat e_h^m)}{\tau}
	+ D_3^m + D_4^m + D_5^m .\notag
\end{align}
The time-derivative estimates in Lemma~\ref{bhk1:main} give
\begin{align}
	| D_3^m |
	&\lesssim
	h^2 \| \ehM \|_{H^1(\Ghsm)}
	.
	\notag
\end{align}
H\"older's inequality and \eqref{eq:X-hat-disp-W1inf1} yield
\begin{align}
	| D_4^m |
	&\lesssim
	\|  \tilde{\mathscr D}^m \|_{H^{-1}(\Ghsm)} \| \ehM \|_{H^1(\Ghsm)}
	\lesssim
	(\tau+h^2) \| \ehM \|_{H^1(\Ghsm)} ,
	\notag
\end{align}
and Lemma~\ref{lemma:corrected-displacement-nodal} below gives
\begin{align}
	| D_5^m |
	&\lesssim
	\|  \tilde{\mathscr D}^m \|_{H^{-1}(\Ghsm)}
		\Big\| I_h\Nsm\Big(\frac{e_h^{m+1}-\hat e_h^{m+1}}{\tau} - I_h\Tsm v^m\Big) \Big\|_{H^1(\Ghsm)}
	\lesssim
	\tau (\tau+h^2) .
	\notag
\end{align}
Here $\|  \tilde{\mathscr D}^m \|_{H^{-1}(\Ghsm)}$ denotes the dual norm of the consistency functional acting on the finite dimensional space of test functions.
Combining these decompositions, we obtain
\begin{align}
	{\mathscr D}^m(I_h\Nbhsm\dtem )
	&=
	\frac{\tilde{\mathscr D}^{m+1}( \hat e_h^{m+1})- \tilde{\mathscr D}^{m} ( \hat e_h^m)}{\tau}
	+
	\sum_{i=1}^5 D_i^m ,
	\notag
\end{align}
with
\begin{align}\label{eq:Dim}
	\Big| \sum_{i=1}^5 D_i^m \Big|
	&\lesssim
	\epsilon^{-1}(\tau+h^2)^2
	+
	\epsilon
	\| \dtem \cdot \nbhsm \|_{L_h^2(\Ghsm)}^2
	\notag\\
	&\quad
	+
	\| \eM \|_{H^1(\Ghsm)}^2
	+
	\| \eem \|_{H^1(\Ghsm)}^2
	+
	\| \ehm \|_{H^1(\Ghsm)}^2
	.
\end{align}

We conclude this subsection by proving the super-approximation result used above to estimate $A_3^m$ and $D_5^m$.
\begin{lemma}
	\label{lemma:corrected-displacement-nodal}
	The following super-approximation estimate holds:
	\begin{align}
		\|
		I_h\Nsm\bigl(\eM-\ehM-\tau I_h\Tsm v^m\bigr) \|_{H^1(\Ghsm)}
		\lesssim \tau^2
		. \notag
%		\label{eq:corrected-displacement-nodal}
	\end{align}
	We also have
	\begin{align}
		\|
		I_h\Nsm\bigl(e_h^m-\ehm-\tau I_h\Tsm v^m\bigr) \|_{H^1(\Ghsm)}
		\lesssim \tau^2
		. \notag
%		\label{eq:corrected-displacement-nodal1}
	\end{align}
	As a consequence,
	\begin{align}
		\|
		I_h\Nsm e_h^m-\ehm \|_{H^1(\Ghsm)}
		\lesssim \tau^2
		. \notag
%		\label{eq:corrected-displacement-nodal2}
	\end{align}
\end{lemma}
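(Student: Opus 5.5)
The key observation is that $\eM-\ehM$ does not involve the numerical solution. Since $\eM=X_h^{m+1}-X_{h,*}^{m+1}$ and $\ehM=X_h^{m+1}-\hat X_{h,*}^{m+1}$, we have $\eM-\ehM=\hat X_{h,*}^{m+1}-X_{h,*}^{m+1}$. By \eqref{eq:X-id1}, at every node of $\Ghsm$,
\[
\eM-\ehM=(\hat X_{h,*}^{m+1}-\hat X_{h,*}^m)-(Y^{m+1}-{\rm id}_\Gm)\circ a^m .
\]
The plan is to apply $\Nsm$ nodally and use \eqref{eq:geo_rel_4}. This gives $\Nsm(\hat X_{h,*}^{m+1}-\hat X_{h,*}^m)=(Y^{m+1}-{\rm id}_\Gm)\circ a^m+\rho_h^m$. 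Since $\Nsm\Tsm=0$ at the nodes, the correction $\tau I_h\Tsm v^m$ drops out after projection. Therefore, nodally,
\[
\Nsm\bigl(\eM-\ehM-\tau I_h\Tsm v^m\bigr)=\rho_h^m+\Tsm(Y^{m+1}-{\rm id}_\Gm)\circ a^m=\rho_h^m+\tau\,\Tsm g^m\circ a^m ,
\]
where the last step uses \eqref{eq:X-id2} and $\Tsm n^m=0$. The term $\tau I_h(\Tsm g^m\circ a^m)$ is the interpolant of a smooth function whose $W^{1,\infty}$ norm is $\lesssim\tau$ by \eqref{W1infty-g}. It is therefore $O(\tau^2)$ in $W^{1,\infty}$, and hence in $H^1$.

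The main obstacle is the $H^1$ (not merely nodal $L^\infty$) bound on $\rho_h^m$. I would write $\rho_h^m(p)=\Phi\bigl(p,\Tsm(\hat X_{h,*}^{m+1}-\hat X_{h,*}^m)(p)\bigr)$, where $\Phi$ is a smooth function of the base point $p\in\Gm$ and of the tangential displacement. This is possible because $\hat X_{h,*}^{m+1}(p)$ is the closest-point projection onto $\Gamma^{m+1}$ of a point determined by $p$ and the displacement. The structure of $\Phi$ is $\Phi(p,0)=O(\tau^2)$ with $\partial_p\Phi(p,0)=O(\tau^2)$, which comes from the smoothness in $t$ of the exact flow and the $O(\tau^2)$ Taylor remainder. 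Moreover, $\Phi$ is quadratic in its second argument, in the sense of \eqref{eq:geo_rel_5}.

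Differencing $\Phi$ between the two endpoints of an edge then bounds the edgewise gradient of $\rho_h^m$ by
\[
\tau^2+\|I_h\Tsm(\hat X_{h,*}^{m+1}-\hat X_{h,*}^m)\|_{L^\infty}\,\|I_h\Tsm(\hat X_{h,*}^{m+1}-\hat X_{h,*}^m)\|_{W^{1,\infty}} .
\]
By \eqref{eq:X-hat-disp-W1inf} and Lemma~\ref{lemma:super_conv-nonlinear}, both displacement factors are $\lesssim\tau$, so the whole bound is $\lesssim\tau^2$. Together with $\|\rho_h^m\|_{L^\infty}\lesssim\tau^2$, this proves the first estimate.

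For the second estimate, I would repeat the argument one step back. Nodally,
\[
e_h^m-\ehm=\hat X_{h,*}^m-X_{h,*}^m=(\hat X_{h,*}^m-\hat X_{h,*}^{m-1})-(Y^m-{\rm id})\circ a^{m-1}\circ \hat X_{h,*}^{m-1}.
\]
I would apply $N_*^{m-1}$ to obtain $\rho_h^{m-1}+\tau T_*^{m-1}g^{m-1}$, which is $O(\tau^2)$ in $W^{1,\infty}$ by the same argument. It remains to account for replacing $N_*^{m-1}$ by $\Nsm$ and for the correction $\tau I_h\Tsm v^m$:
\begin{itemize}
\item The replacement costs $I_h(\Nsm-N_*^{m-1})(e_h^m-\ehm)$. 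The first factor is $O(\tau)$ in $W^{1,\infty}$ by \eqref{eq:X-hat-disp-W1inf1}, and $e_h^m-\ehm$ is $O(\tau)$ in $W^{1,\infty}$ by the displacement estimates. The product is therefore $O(\tau^2)$.
\item The correction satisfies $\Nsm\Tsm v^m=0$ at the nodes, so it contributes nothing.
\end{itemize}

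Finally, $\ehm$ is nodally parallel to $\nsm$ by construction of the closest-point projection, so $I_h\Nsm\ehm=\ehm$. Hence
\[
I_h\Nsm e_h^m-\ehm=I_h\Nsm\bigl(e_h^m-\ehm-\tau I_h\Tsm v^m\bigr),
\]
and the third estimate follows from the second.
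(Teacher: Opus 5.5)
Your proof is correct, and for the first (main) estimate it takes a genuinely different route from the paper.

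\textbf{The paper's route.} The paper introduces the auxiliary flow $\tilde Y$ generated by the full velocity $v$, with interpolant $\tilde X_{h,*}^{m+1}$ whose nodes lie on $\GM$. Then $\eM-\ehM-\tau I_h\Tsm v^m$ equals, up to an $O(\tau^2)$ smooth term, the chord $\hat X_{h,*}^{m+1}-\tilde X_{h,*}^{m+1}$ between two points of $\GM$. Because $\tilde Y$ already carries the tangential velocity, this chord has nodal tangential part only $O(\tau|I_h\Tsm\dtem|+\tau|\eM|+\tau^2)$. A purely nodal quadratic bound then suffices, combined with the inverse inequality, \eqref{eq:a-priori-delta-e} (via $L^4$) and $\tau\le c_0h^2$. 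The price is an extra term $I_h(\NsM-\Nsm)\TsM(\cdot)$.

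\textbf{Your route.} You observe that $I_h\Nsm I_h\Tsm v^m=0$, so the correction drops out and one has the exact identity $I_h\Nsm(\eM-\ehM-\tau I_h\Tsm v^m)=\rho_h^m+\tau I_h(\Tsm g^m)$. Relative to the normal flow, the chord has $O(\tau)$ tangential part. Hence \eqref{eq:geo_rel_5} plus an inverse inequality would lose $h^{-1}$. You correctly compensate by differencing $\rho_h^m(p)=\Phi(p,w(p))$, with $w:=I_h\Tsm(\hat X_{h,*}^{m+1}-\hat X_{h,*}^m)$, across edges. You then use \eqref{eq:X-hat-disp-W1inf}, which is proved before the lemma, so there is no circularity.

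\textbf{Comparison.} Your argument is shorter, gives a $W^{1,\infty}$ bound, and uses the velocity estimates and step-size condition only through \eqref{eq:X-hat-disp-W1inf}. Its cost is a smooth-dependence (implicit function) statement for $\rho_h^m$ with uniform $C^2$ control. This is finer than the nodal estimate quoted from \cite{BL2024}, whereas the paper only needs pointwise chord geometry.

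\textbf{Small corrections.}
\begin{enumerate}
\item $\partial_w\Phi(p,0)$ is $O(\tau)$ but generally nonzero; it records the rotation of the normal from $\Gm$ to $\GM$, roughly $\tau\nabla_\Gamma H$. Your edge bound should therefore read $\tau^2+(\tau+\|w\|_{L^\infty})\|w\|_{W^{1,\infty}}$, which is still $O(\tau^2)$.
\item The smoothness of $\Phi$ comes from the fact that $\hat X_{h,*}^{m+1}(p)\in\GM$ lies within $O(\tau)$ of $p$. By a local graph representation it is then determined by its tangential offset; the closest-point projection is not the relevant reason.
\end{enumerate}

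Your second estimate matches the paper's: shift to step $m-1$ and pay $I_h(\Nsm-N_*^{m-1})(e_h^m-\ehm)$ via \eqref{eq:X-hat-disp-W1inf1}. You do need to treat $m=0$ separately; there $e_h^0=\hat e_h^0$, so the left-hand side vanishes. The third estimate follows from $I_h\Nsm\ehm=\ehm$ as you say.
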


\begin{proof}
	Let $\tilde Y(t;p)$ be the local flow generated by the
	time-dependent exact velocity $v$ determined by the system \eqref{subeq:system}:
	\begin{align}
		\partial_t\tilde Y(t;p)
		&=v\bigl(t,\tilde Y(t;p)\bigr),
		\qquad t\in[t_m,t_{m+1}],
		\notag\\
		\tilde Y(t_m;p)&=p,
		\qquad p\in\Gm.
		\notag
%		\label{eq:corrected-displacement-flow}
	\end{align}
	Since $v\cdot n=-H$, the flow remains on the evolving curve, so $\tilde Y(t;p)\in\Gamma(t)$.
	Define $\tilde X_{h,*}^{m+1}\in[S_h(\Ghsm)]^2$ by its nodal values
	\begin{align}
		\tilde X_{h,*}^{m+1}(p)
		=\tilde Y(t_{m+1};p),
		\qquad p\in\mathcal N(\Ghsm). \notag
%		\label{eq:corrected-displacement-flow-interpolant}
	\end{align}
	Thus the nodes of this interpolated curve lie on $\GM$.
	
	Taylor expansion of this flow gives
	\begin{align}
		\tilde X_{h,*}^{m+1}-\hat X_{h,*}^{m}
		&=\tau I_h\bigl(-H^mn^m+\Tsm v^m + I_h \tilde g^m \bigr) , 
%		\notag
		\label{eq:corrected-displacement-flow-expansions}
	\end{align}
	with
	\begin{align}
		\| I_h \tilde g^m \|_{W^{1,\infty}(\Ghsm)} \lesssim\tau. \notag
	\end{align}
	Combining
	\eqref{eq:X-id1}--\eqref{W1infty-g} and \eqref{eq:corrected-displacement-flow-expansions} yields
	\begin{align}
		\tilde X_{h,*}^{m+1}
		=X_{h,*}^{m+1}+\tau I_h\Tsm v^m+\tau I_h(-g^m+\tilde g^m) , \notag
%		\label{eq:corrected-displacement-flow-remainder}
	\end{align}
	and
	\begin{align}
		\eM-\ehM-\tau I_h\Tsm v^m
		&=\hat X_{h,*}^{m+1}-X_{h,*}^{m+1}
		-\tau I_h\Tsm v^m
		\notag\\
		&=\hat X_{h,*}^{m+1}-\tilde X_{h,*}^{m+1}+\tau I_h(-g^m+\tilde g^m).
		\label{eq:corrected-displacement-chord-identity}
	\end{align}
	Since $\hat X_{h,*}^{m+1}- \tilde X_{h,*}^{m+1}$ is an almost tangential chord, Taylor's theorem yields the nodal estimate
	\begin{align}
		| I_h \NsM (\hat X_{h,*}^{m+1}-\tilde X_{h,*}^{m+1})|
		\lesssim| I_h \TsM (\hat X_{h,*}^{m+1}-\tilde X_{h,*}^{m+1})|^2.
		\label{eq:corrected-displacement-chord-bound}
	\end{align}
	
	On $\Ghsm$, we have the identity
	\begin{align}
		I_h \TsM (\hat X_{h,*}^{m+1}-\tilde X_{h,*}^{m+1})
		&=
		I_h \TsM\big(\eM-\ehM-\tau I_h\Tsm v^m
		- \tau I_h (-g^m+\tilde g^m)\big)
		\notag\\
		&=
		I_h \Tsm(\eM-\ehm-\tau I_h\Tsm v^m)
		\notag\\
		&\quad+
		I_h (\TsM-\Tsm)(\eM-\tau I_h\Tsm v^m)
		\notag\\
		&\quad
		- \tau I_h \TsM (-g^m+\tilde g^m)
		\notag .
	\end{align}
	Consequently,
	\begin{align}
		| I_h \TsM (\hat X_{h,*}^{m+1}-\tilde X_{h,*}^{m+1})|
		&\lesssim
		\tau | I_h \Tsm\dtem|
		+
		\tau| \eM|
		+\tau^2
		\notag ,
	\end{align}
	at all finite element nodes.
	
	Identity \eqref{eq:corrected-displacement-chord-identity} and orthogonality also give, on $\Ghsm$,
	\begin{align}
		&
		I_h \Nsm\bigl(\eM-\ehM-\tau I_h\Tsm v^m\bigr)
		\notag\\
		&=
		I_h \Nsm (\hat X_{h,*}^{m+1}-\tilde X_{h,*}^{m+1})
		+
		\tau
		I_h \Nsm
		(-g^m+\tilde g^m)
		\notag\\
		&=
		I_h \NsM (\hat X_{h,*}^{m+1}-\tilde X_{h,*}^{m+1})
		\notag\\
		&\quad
		-
		I_h (\NsM-\Nsm) \NsM (\hat X_{h,*}^{m+1}-\tilde X_{h,*}^{m+1})
		\notag\\
		&\quad
		-
		I_h (\NsM-\Nsm) \TsM (\hat X_{h,*}^{m+1}-\tilde X_{h,*}^{m+1})
		\notag\\
		&\quad
		+
		\tau
		I_h \Nsm
		(-g^m+\tilde g^m)
		\notag .
	\end{align}
	Applying \eqref{eq:corrected-displacement-chord-bound} to this identity yields the nodal bound
	\begin{align}
		&
		\Big|I_h\Nsm\bigl(\eM-\ehM-\tau I_h\Tsm v^m\bigr) -\tau I_h \Nsm (-g^m+\tilde g^m)
		\notag\\
		&\quad
		+
		I_h (\NsM-\Nsm) \TsM (\hat X_{h,*}^{m+1}-\tilde X_{h,*}^{m+1})
		\Big|
		\notag\\
		&\lesssim
		(1+\tau)
		| I_h \NsM (\hat X_{h,*}^{m+1}-\tilde X_{h,*}^{m+1})|
		\notag\\
		&\lesssim
		| I_h \TsM (\hat X_{h,*}^{m+1}-\tilde X_{h,*}^{m+1})|^2
		\notag\\
		&\lesssim
		\tau^2 | I_h \Tsm\dtem|^2 + \tau^2 | \eM|^2 + \tau^4 . \notag
	\end{align}
	The triangle inequality, the inverse inequality, and \eqref{eq:corrected-displacement-chord-identity} then give
	\begin{align}
		&
		\| I_h \Nsm\bigl(\eM-\ehM- \tau I_h\Tsm v^m\bigr) \|_{H^1(\Ghsm)}
		\notag\\
		&\lesssim
		\| I_h (\NsM-\Nsm) \TsM (\hat X_{h,*}^{m+1}-\tilde X_{h,*}^{m+1}) \|_{H^1(\Ghsm)}
		+
		\tau
		\| I_h \Nsm  (-g^m+\tilde g^m) \|_{H^1(\Ghsm)}
		\notag\\
		&\quad
		+ h^{-1}\tau^2 \| I_h \Tsm\dtem \|_{L^4(\Ghsm)}^2 + h^{-1} \tau^2 \| \eM \|_{L^4(\Ghsm)}^2
		+
		h^{-1}\tau^4
		\notag\\
		&\lesssim
		\tau^2
		+\tau^2 \| I_h \Tsm\dtem \|_{H^1(\Ghsm)}
		+ h^{-1}\tau^2 \| I_h \Tsm\dtem \|_{L^4(\Ghsm)}^2 + h^{-1} \tau^2 \| \eM \|_{L^4(\Ghsm)}^2
		\notag\\
		&\lesssim
		\tau^2 , \notag
	\end{align}
	where the final inequality uses \eqref{eq:a-priori-delta-e} and \eqref{hat-ehm-L2}--\eqref{hat-ehm-H1}.
	Finally, \eqref{eq:X-hat-disp-W1inf1} yields
	\begin{align}
		&\|
		I_h\Nsm\bigl(e_h^m-\ehm-\tau I_h\Tsm v^m\bigr) \|_{H^1(\Ghsm)}
		\notag\\
		&\leq
		\|
		I_h(\Nsm-N_*^{m-1})\bigl(e_h^m-\ehm-\tau I_h\Tsm v^m\bigr) \|_{H^1(\Ghsm)}
		\notag\\
		&\quad
		+
		\|
		I_hN_*^{m-1}\bigl(e_h^m-\ehm-\tau I_h\Tsm v^m\bigr) \|_{H^1(\Ghsm)}
		\notag\\
		&\lesssim \tau^2
		, \notag
	\end{align}
	for $m\geq 1$. When $m=0$, we have $I_h\Nsm(e_h^m-\ehm-\tau I_h\Tsm v^m)=0$.
	This completes the proof.
\end{proof}

\subsection{Main error estimates}\label{sec:main-err}

Substituting the decomposition of $A_i^m$ and $D_i^m$ into \eqref{eq:err2}, we obtain
\begin{align}\label{error-estimate-1}
	&
	\frac{\| \nabla_\GhsM I_h\NsM e_h^{m+1} \|_{L^2(\GhsM)}^2 - \| \nabla_\Ghsm I_h\Nsm e_h^{m} \|_{L^2(\Ghsm)}^2}{2\tau}
	+
	\| \delta_\tau \ehm \cdot \nbhsm \|_{L_h^2(\hat\Gamma_{h,*}^{m})}^2
	\notag\\
	&\leq
	-
	\frac{\tilde{\mathscr D}^{m+1}( \hat e_h^{m+1})- \tilde{\mathscr D}^{m} ( \hat e_h^m)}{\tau}
	- \int_{\hat\Gamma_{h, *}^{m}}^h I_h\Tsm v^m \cdot \bar n_{h, *}^m \,   \dtem \cdot \bar n_{h, *}^m
	\notag\\
	&\quad
	- \sum_{i=1}^5 D_i^m
	- \mathscr J^m( I_h\Nbhsm \dtem) - \mathscr B^m(\ehm,  I_h\Nbhsm \dtem) - \mathscr K^m( I_h\Nbhsm \dtem)  \notag\\
	&\quad
	-
	\sum_{i=1}^8 A_i^m
	- \sum_{i=1}^3 \mathscr F_i^m( I_h\Nbhsm \dtem)
	\notag\\
	&\leq
	-
	\frac{\tilde{\mathscr D}^{m+1}( \hat e_h^{m+1})- \tilde{\mathscr D}^{m} ( \hat e_h^m)}{\tau}
	+
	\epsilon^{-1} C_\kl (\tau + h^{2})^2
	+ \epsilon C_\kl \| \dtem\cdot\nbhsm \|_{L_h^2(\Ghsm)}^2 \notag\\
	&\quad+ \epsilon^{-1} C_\kl \big(\| e_h^m \|_{H^1(\Ghsm)}^2 + \| \ehm \|_{H^1(\Ghsm)}^2 + \| \eM \|_{H^1(\Ghsm)}^2 + \| \ehM \|_{H^1(\Ghsm)}^2 \big) ,
\end{align}
%where the final inequality combines the preceding estimates for the remaining terms.
where the final inequality follows from \eqref{eq:mass-ortho},
\eqref{eq:J_est}, \eqref{eq:K_est}, and
\eqref{Estimate-F1m}--\eqref{Estimate-F3m}, together with the
bounds for $\sum_{i=1}^8 A_i^m$ and $\sum_{i=1}^5 D_i^m$
established in \eqref{eq:Aim} and \eqref{eq:Dim}.
%We have also used the velocity estimates
%\eqref{eq:vel-est-H1}--\eqref{eq:a-priori-delta-e},
%the induction hypothesis \eqref{eq:ind_hypo1},
%the step-size condition $\tau\leq c_0h^2$,
%norm equivalence, the inverse inequality, and Young's inequality.
The term $\mathscr B^m(\ehm,I_h\Nbhsm\dtem)$ vanishes on curves,
as noted in Section~\ref{sec:lin-bilin-est}.

Choosing the Young parameter sufficiently small allows us to absorb the term $\epsilon\| \dtem\cdot\nbhsm \|_{L_h^2(\Ghsm)}^2$ into the left-hand side. We then apply \eqref{hat-ehm-H1} to bound $\eem$ and $\eM$ in terms of $\hat e_h^{m-1}$ and $\ehm$ on the right-hand side.
Multiplying \eqref{error-estimate-1} by $\tau$ and summing over the time steps yields
\begin{align}
	& \frac{1}{2} \| \nabla_\GhsM I_h\NsM e_h^{m+1} \|_{L^2(\GhsM)}^2
	+
	\frac{1}{2}
	\sum_{j=0}^m \tau
	\| \delta_\tau \hat e_{h}^j \cdot \bar n_{h,*}^j \|_{L_h^2(\hat\Gamma_{h,*}^{j})}^2
	\notag\\
	&\leq
	-
	\tilde{\mathscr D}^{m+1}(I_h\NsM \hat e_h^{m+1})
	+
	\epsilon^{-1} C_\kl (\tau+ h^{2})^2
	+
	\epsilon^{-1} C_\kl \sum_{j=0}^{m+1} \tau \| \hat e_{h}^j \|_{H^1(\hat\Gamma_{h,*}^j)}^2 , \notag
\end{align}
where we have used $\hat e_h^{0} = 0$.
The endpoint consistency term satisfies, by \eqref{eq:tildeDm},
\begin{align}
	| \tilde{\mathscr D}^{m+1}( \hat e_h^{m+1}) |
	\leq C_\kl (\tau+h^2) \| \hat e_h^{m+1} \|_{H^1(\Ghsm)} . \notag
\end{align}
Lemma~\ref{lemma:corrected-displacement-nodal} allows us to replace $I_h\NsM e_h^{m+1}$ on the left-hand side by $\hat e_h^{m+1}$ up to a higher-order smaller remainder. Together with the normal Poincar\'e inequality (Lemma \ref{lemma:poincare}), this yields the full $H^1$ estimate
\begin{align}
	& \| \hat e_h^{m+1} \|_{H^1(\GhsM)}^2
	+
	\sum_{j=0}^m \tau
	\| \delta_\tau \hat e_h^j \cdot \bar n_{h,*}^j \|_{L_h^2(\hat\Gamma_{h,*}^{j})}^2
	\notag\\
	&\leq
	\epsilon^{-1} C_\kl (\tau+ h^{2})^2
	+
	\epsilon^{-1} C_\kl \sum_{j=0}^{m+1} \tau \|  \hat e_h^j \|_{H^1(\hat\Gamma_{h,*}^j)}^2
	+
	\epsilon C_\kl \| \ehM \|_{H^1(\Ghsm)}^2
	, \notag
\end{align}
for all $0\leq m\leq l$.
Applying the discrete Gr\"onwall inequality, we obtain
\begin{align}\label{eq:err_fin0}
	\max_{0\le m\le l+1} \| \hat e_h^{m} \|_{H^1(\Ghsm)}^2
	+
	\sum\limits_{m = 0}^{l} \tau
	\| \delta_\tau \ehm \cdot \nbhsm \|_{L_h^2(\hat\Gamma_{h,*}^{m})}^2
	\le C_\kl ( \tau +  h^{2})^2 .
\end{align}
A further application of \eqref{hat-ehm-H1}, the norm equivalence (Lemma \ref{lemma:lump}), \eqref{eq:tan_stab_e1} and \eqref{eq:err_fin0} gives
\begin{align}\label{eq:err_fin1}
	\max_{0\le m\le l+1} \| e_h^{m} \|_{H^1(\Ghsm)}^2
	+
	\sum\limits_{m = 0}^{l} \tau
	\| \delta_\tau \ehm \|_{L^2(\hat\Gamma_{h,*}^{m})}^2
	\le C_\kl ( \tau +  h^{2})^2 .
\end{align}
The uniform boundedness of $C_\kl$ is established in Section~\ref{sec:bbd}.

\subsection{Uniform boundedness of $\kappa_l$}
\label{sec:bbd}

Throughout this subsection, we view $\hat X_{h,*}^m$ and $X_h^m$ as maps from the reference curve $\Gamma_{h,\rm f}^0$ to $\Ghsm$ and $\Gamma_h^m$, respectively. We write $v^m_{\rm f} = v^m \circ a^m \circ \hat X_{h,*}^m$ and $g^m_{\rm f} = g^m \circ a^m \circ \hat X_{h,*}^m$ for the corresponding functions on $\Gamma_{h,\rm f}^0$.
The geometric relations \eqref{eq:geo_rel_4}--\eqref{eq:geo_rel_6} give
\begin{align*}
	&\|\hat X_{h,*}^{m + 1} -\hat X_{h,*}^{m}\|_{W^{1,\infty}(\Gamma_{h,\rm f}^0)} \notag\\
	&\le \| I_h [(Y^{m + 1} - {\rm id})\circ a^m \circ \hat X_{h,*}^{m} ]\|_{W^{1,\infty}(\Gamma_{h,\rm f}^0)}
	+ \| \rho_h^m\circ \hat X_{h,*}^{m} \|_{W^{1,\infty}(\Gamma_{h,\rm f}^0)} \\
	&\quad+\| I_h[ I_h(T_*^m\circ \hat X_{h,*}^{m}) I_h (N_*^{m+1}\circ \hat X_{h,*}^{m+1}-N_*^{m}\circ \hat X_{h,*}^{m})  (\hat e_{h}^{m + 1} \circ \hat X_{h,*}^{m})]\|_{W^{1,\infty}(\Gamma_{h,\rm f}^0)} \\
	&\quad+\| I_h[(T_*^m \circ \hat X_{h,*}^{m} ) (X_{h}^{m + 1} - X_{h}^{m})  ] \|_{W^{1,\infty}(\Gamma_{h,\rm f}^0)} \\
	&=: E_1^m + E_2^m + E_3^m.
\end{align*}
The stability of $I_h$ on $C^0(\Gamma_{h,\rm f}^0)\cap W^{1,\infty}(\Gamma_{h,\rm f}^0)$, the chain rule, the inverse inequality, and \eqref{eq:geo_rel_5} imply
\begin{align*}
	E_1^m
	&\le C_0 \| (Y^{m + 1} - {\rm id})\circ a^m \circ \hat X_{h,*}^{m} \|_{W^{1,\infty}(\Gamma_{h,\rm f}^0)}
	+
	\| \rho_h^m\circ \hat X_{h,*}^{m} \|_{W^{1,\infty}(\Gamma_{h,\rm f}^0)}
	\\
	&\le C_0 \| \nabla_{\hat\Gamma_{h,*}^m} [(Y^{m + 1} - {\rm id})\circ a^m] \circ \hat X_{h,*}^{m}
	\,\nabla_{\Gamma_{h,\rm f}^0} \hat X_{h,*}^{m} \|_{L^{\infty}_h(\Gamma_{h,\rm f}^0)} \\
	&\quad
	+ C_0\| Y^{m + 1} - {\rm id} \|_{L^{\infty}(\Gm)}
	+ C_0 h^{-1} \| \rho_h^m\circ \hat X_{h,*}^{m} \|_{L^{\infty}(\Gamma_{h,\rm f}^0)}
	\\
	&\le C_0\tau\|\hat X_{h,*}^{m} \|_{W^{1,\infty}(\Gamma_{h,\rm f}^0)}
	+ C_0\tau + C_0 h^{-1} \big(\tau^2 + \|I_h \Tsm (\hat X_{h,*}^{m+1} - \hat X_{h,*}^{m}) \|_{L^\infty(\Ghsm)}^2\big) \notag\\
	&\le C_0\tau\|\hat X_{h,*}^{m} \|_{W^{1,\infty}(\Gamma_{h,\rm f}^0)}
	+ C_0\tau ,
\end{align*}
where the final inequality uses \eqref{eq:X-hat-disp-W1inf} and the step-size condition $\tau\leq c_0 h^2$ for sufficiently small $h\leq h_\kl$.
The inverse inequality also gives
\begin{align*}
	E_2^m
	&\le C_0h^{-3/2} \| I_h[ I_h(T_*^m\circ \hat X_{h,*}^{m}) I_h (N_*^{m+1}\circ \hat X_{h,*}^{m+1}-N_*^{m}\circ \hat X_{h,*}^{m})  (\hat e_{h}^{m + 1} \circ \hat X_{h,*}^{m})]\|_{L^{2}_h(\Gamma_{h,\rm f}^0)} \\
	&\le C_{\kappa_l} h^{-3/2} \tau (\tau+h^{2}) 
	\leq C_0 \tau
	,
\end{align*}
where we have used \eqref{eq:X-hat-disp-W1inf1}, \eqref{eq:err_fin0} and the step-size condition $\tau\le c h^{2}$ for sufficiently small $h\leq h_\kl$.
%Combining the bounds for $E_1^m$ and $E_2^m$, we obtain
%\begin{align*}
%	\|\hat X_{h,*}^{m + 1} -\hat X_{h,*}^{m}\|_{W^{1,\infty}(\Gamma_{h,\rm f}^0)}
%	&\le
%	C_0\tau (1+\| \hat X_{h,*}^{m} \|_{W^{1,\infty}(\Gamma_{h,\rm f}^0)})
%	\notag\\
%	&\quad
%	+ C_0\| I_h[(T_*^m \circ \hat X_{h,*}^{m} ) (X_{h}^{m + 1} - X_{h}^{m})  ] \|_{W^{1,\infty}(\Gamma_{h,\rm f}^0)}  .
%\end{align*}
To estimate $E_3^m$, we use \eqref{eq:geo_rel_31} and write
\begin{align*}
	E_3^m
%	=\| I_h[(T_*^m \circ \hat X_{h,*}^{m} ) (X_{h}^{m + 1} - X_{h}^{m})  ]\|_{W^{1,\infty}(\Gamma_{h,\rm f}^0)} \notag\\
%	&
	&\leq \| I_h[(T_*^m \circ \hat X_{h,*}^{m} ) (X_{h}^{m+1} - X_{h}^{m} - \tau I_h v^m_{\rm f}) ] \|_{W^{1,\infty}(\Gamma_{h,\rm f}^0)}
	+ \tau\| I_h[(T_*^m \circ \hat X_{h,*}^{m} ) v^m_{\rm f}] \|_{W^{1,\infty}(\Gamma_{h,\rm f}^0)} \notag\\
	&=  \tau \| I_h[(T_*^m \circ \hat X_{h,*}^{m} ) (\dtem + I_h g^m_{\rm f} )]  \|_{W^{1,\infty}(\Gamma_{h,\rm f}^0)}
%	\notag\\
%	&\quad
	+ \tau\| I_h[(T_*^m \circ \hat X_{h,*}^{m} ) v^m_{\rm f}] \|_{W^{1,\infty}(\Gamma_{h,\rm f}^0)} \\
	&\leq \tau \| I_h[(T_*^m \circ \hat X_{h,*}^{m} ) \dtem ]  \|_{W^{1,\infty}(\Gamma_{h,\rm f}^0)}
%	\notag\\
%	&\quad
	+ C_0 \tau (C_{\kappa_l}\tau + 1 + \| \hat X_{h,*}^m \|_{W^{1,\infty}(\Ghso)})
	\notag\\
	&\leq C_{\kappa_l} h^{-1/2} \tau \Big( (\tau + h^{2}) + \| \nabla_\Ghsm \ehm \|_{L^2(\Ghsm)} 
		\notag\\
		&\quad
		+ (1+h^{-2}\|\nabla_\Ghsm\ehm\|_{L^2(\Ghsm)})\| I_h\Nbhsm \delta_\tau\ehm \|_{L^2(\Ghsm)}\Big)
%	\notag\\
%	&\quad
	+ C_0 \tau (1 + \| \hat X_{h,*}^m \|_{W^{1, \infty}(\Ghso)})
	\notag\\
	&\leq C_{\kappa_l} h^{-1/2} \tau \| I_h\Nbhsm \delta_\tau\ehm \|_{L^2(\Ghsm)}
%	\notag\\
%	&\quad
	+ C_0 \tau (1 + \| \hat X_{h,*}^m \|_{W^{1, \infty}(\Ghso)})  .
\end{align*}
Here, the penultimate inequality follows from the inverse inequality, \eqref{eq:tan_stab_e} and
\begin{align*}
	\| I_h g^m_{\rm f} \|_{W^{1,\infty}(\Gamma_{h,\rm f}^0)} &\le C_\kl \tau
	&&\mbox{(in view of \eqref{W1infty-g})} ,\\
	\| (\Tsm\circ \hat X_{h,*}^m) v^m_{\rm f} \|_{W^{1,\infty}(\Gamma_{h,\rm f}^0)} &\le
	C_0(1+ \|\hat X_{h,*}^m \|_{W^{1,\infty}(\Gamma_{h,\rm f}^0)} )
	&&\mbox{(chain rule of differentiation)} ,
\end{align*}
while the last inequality is derived by invoking \eqref{eq:err_fin0} and the step-size condition $\tau\le c h^{2}$ for sufficiently small $h\leq h_\kl$.

%Applying the inverse inequality under the conditions $\tau\le c h^{2}$ and $C_{\kappa_l}h^{\frac12}\le 1$, we obtain
%\begin{align*}
%	&\| I_h[(T_*^m \circ \hat X_{h,*}^{m} ) (X_{h}^{m + 1} - X_{h}^{m})  ]\|_{W^{1,\infty}(\Gamma_{h,\rm f}^0)} \notag\\
%	&\leq  C_0h^{-1/2}\| I_h[(T_*^m \circ \hat X_{h,*}^{m} ) (\eM - \ehm  - \tau I_h (\Tsm\circ\hat X_{h,*}^m) v^m_{\rm f} )]  \|_{H^{1}(\Gamma_{h,\rm f}^0)}
%	\notag\\
%	&\quad
%	+ C_0 \tau (1 + \| \hat X_{h,*}^m \|_{W^{1,\infty}(\Ghso)})  \notag\\
%	&\leq {\r C_{\kappa_l} h^{-1/2} \tau \| \delta_\tau\ehm \|_{L^2(\Ghsm)}}
%	+ C_0 \tau (1 + \| \hat X_{h,*}^m \|_{W^{1, \infty}(\Ghso)}) ,
%\end{align*}
%where the final inequality uses \eqref{eq:tan_stab_e} and \eqref{eq:err_fin1}.
%Recalling \eqref{eq:err_fin0}, the step-size condition $\tau\le c h^{2}$, and the bound $\| \nabla_\Ghsm \ehm \|_{L^2(\Ghsm)}\le C_{\kappa_l} h^{1.75}$, we record the resulting estimate as
%\begin{align}\label{eq:k_inv}
%	&\| I_h[(T_*^m \circ \hat X_{h,*}^{m} ) (X_{h}^{m + 1} - X_{h}^{m})  ]\|_{W^{1,\infty}(\Gamma_{h,\rm f}^0)} \notag\\
%	&\leq {\r C_{\kappa_l} h^{-1/2} \tau \| \delta_\tau\ehm \|_{L^2(\Ghsm)}}
%	+ C_0 \tau (1 + \| \hat X_{h,*}^m \|_{W^{1, \infty}(\Ghso)}) ,
%\end{align}
%where the mesh-size condition $C_{\kappa_l}h^{\frac12}\le 1$ is used again.
Combining the preceding bounds gives
\begin{align}
	\|\hat X_{h,*}^{m + 1} -\hat X_{h,*}^{m}\|_{W^{1,\infty}(\Gamma_{h,\rm f}^0)}
	\le C_{\kappa_l} h^{-1/2} \tau \| I_h \Nbhsm \delta_\tau\ehm \|_{L^2(\Ghsm)}
	+ C_0 \tau (1 + \| \hat X_{h,*}^m \|_{W^{1, \infty}(\Gamma_{h,\rm f}^0)}) . \notag
\end{align}
The triangle inequality and the super-convergent error estimate \eqref{eq:err_fin1} therefore yield
\begin{align}
	&\|  \hat X_{h,*}^{m+1} \|_{W^{1, \infty}(\Ghso)} - \|  \hat X_{h,*}^0 \|_{W^{1, \infty}(\Ghso)} \notag\\
	&\le \sum_{j=0}^m\|\hat X_{h,*}^{j + 1} -\hat X_{h,*}^{j}\|_{W^{1,\infty}(\Gamma_{h,\rm f}^0)} \notag\\
	&\le
	C_{\kappa_l} h^{-1/2} \sum_{j=0}^m \tau \| I_h \bar N_{h,*}^j \delta_\tau\hat e_h^j \|_{L^2(\hat\Gamma_{h,*}^j)}
	+ C_0 + \sum_{j=0}^m C_0 \tau \| \hat X_{h,*}^j \|_{W^{1,\infty}(\Gamma_{h,\rm f}^0)} \notag\\
	&\le
	C_{\kappa_l} h^{-1/2} (\tau+h^{2})
	+ C_0 + \sum_{j=0}^m C_0 \tau \| \hat X_{h,*}^j \|_{W^{1,\infty}(\Gamma_{h,\rm f}^0)}
	\notag\\
	&\le
	C_0 + \sum_{j=0}^m C_0 \tau \| \hat X_{h,*}^j \|_{W^{1,\infty}(\Gamma_{h,\rm f}^0)}
	, \notag
\end{align}
given the step-size condition $\tau\le c h^{2}$ and sufficiently small $h\leq h_\kl$.
%where \eqref{eq:tan_stab_e1} and \eqref{eq:err_fin1} give
%\begin{align}
%	C_{\kappa_l} h^{-1/2}
%	\sum_{j=0}^m \tau \| \delta_\tau\hat e_h^j \|_{L^2(\hat\Gamma_{h,*}^j)}
%	&\leq
%	C_{\kappa_l} h^{-1/2}
%	\sum_{j=0}^m \tau (\tau+h^2+\| \delta_\tau\hat e_h^j \cdot \bar n_{h,*}^j \|_{L_h^2(\hat\Gamma_{h,*}^j)})
%	\notag\\
%	&\leq
%	C_{\kappa_l} h^{-1/2} (\tau+h^2)
%	\leq C_0 .
%	\notag
%\end{align}
The discrete Gr\"onwall inequality now yields
\begin{align}
	\max_{0\le m\le l} \|  \hat X_{h,*}^{m+1} \|_{W^{1, \infty}(\Ghso)}
	&\le
	C_0 .  \notag
\end{align}
The estimate $\| (\hat X_{h,*}^{m+1})^{-1} \|_{W^{1,\infty}(\hat\Gamma_{h,*}^{m+1})}\le C_0$ follows by the argument used for \cite[Eq. (4.43)]{Bai2026}; we omit the details. Consequently,
\begin{align}\label{eq:bbd-kl}
	\kappa_{l+1} \le C_0 ,
\end{align}
with a constant $C_0$ independent of $\tau$ and $l$. Thus the quantity $\kappa_l$ defined in \eqref{PP} is uniformly bounded. The restriction on the mesh size $h$ can therefore be improved to $h\leq h_0$ for some $h_0>0$ independent of $\tau$ and $l$.
Estimates \eqref{eq:err_fin0}--\eqref{eq:err_fin1} and \eqref{eq:bbd-kl} recover the induction hypothesis \eqref{eq:ind_hypo1} at the next time level $m=l+1$.
This completes the proof of Theorem~\ref{thm:main}.

\subsection{Trajectory convergence}
\label{Proof-THM-2}

Recall the notation from Section \ref{subsec:main-result-induction}.
$\{x_{j,\#}(t):t\in[0,T]\}$ denotes the particle trajectory generated by the velocity in \eqref{subeq:system}, with initial position $x_j^0\in\Gamma_h^0$. $X_{h,\#}^m$ is the finite element function with nodal vector $(x_{1,\#}(t_m),\dots,x_{J,\#}(t_m))^\top$. Then $X_{h,\#}^m$ maps the initial curve $\Gamma_h^0$ onto the polygonal curve $\Gamma_{h,\#}^m$, which interpolates $\Gamma^m$ at the nodes $x_{j,\#}(t_m)$, $j=1,\dots,J$.

%Let $I_h$ denote the nodal interpolation operator on the initial curve $\Gamma_h^0$. 
At the nodes of $\Ghso$, we have
\begin{align*}
	X_{h,\#}^{m+1} = X_{h,\#}^{m} + \tau I_h [v^m\circ X_{h,\#}^{m}] + O(\tau^2) ,
\end{align*}
which is the nodal Taylor expansion of the flow determined by \eqref{subeq:system}. Using \eqref{eq:geo_rel_31},
the trajectory error $e_{h,\#}^m= X_{h,\#}^m - X_h^m$ therefore satisfies
\begin{align*}
	e_{h,\#}^{m+1}
	&= e_{h,\#}^{m} - (X_h^{m+1} - X_h^m - \tau I_h [v^m\circ \hat X_{h,*}^{m}])
	+ \tau I_h[v^m\circ X_{h,\#}^{m} -  v^m\circ \hat X_{h,*}^{m}] + O(\tau^2) \\
	&= e_{h,\#}^{m} - (\eM - \ehm  - \tau I_h[ (\Tsm v^m) \circ \hat X_{h,*}^{m}] + \tau I_h g_{\rm f}^m) \\
	&\quad\,
	+ \tau I_h[v^m\circ X_{h,\#}^{m} -  v^m\circ \hat X_{h,*}^{m}]  + O(\tau^2).
\end{align*}
The smoothness of $v^m$ on $\Gamma^m$ gives
$| I_h (v^m\circ X_{h,\#}^{m} -  v^m\circ \hat X_{h,*}^{m} ) |\le C( |e_{h,\#}^{m}| +|\hat e_h^m|)$; also see Lemma \ref{lemma:super_conv-nonlinear}. Hence, at the nodes of $\Ghso$, we obtain
\begin{align*}
	|e_{h,\#}^{m+1} |
	&\le (1+C\tau)|e_{h,\#}^{m}| + |\eM - \ehm  - \tau I_h[ (\Tsm v^m) \circ \hat X_{h,*}^{m}]| + C\tau^2 + C\tau|\hat e_h^m| .
\end{align*}
Taking the discrete $L^2$ norm on $\Ghso$ and using its equivalence with the continuous $L^2$ norm on $\Gamma_h^0$ (cf. Lemma \ref{lemma:lump}) yields
\begin{align*}
	\|e_{h,\#}^{m+1} \|_{L^2_h(\Ghso)}
	&\le (1+C\tau)\|e_{h,\#}^{m}\|_{L^2_h(\Ghso)}
	+ C\|\eM - \ehm  - \tau I_h[ (\Tsm v^m) \circ \hat X_{h,*}^{m}]\|_{L_h^2(\Ghso)} \notag\\
	&\quad + C\tau^2 + C\tau \|\hat e_h^m \|_{L^2_h(\Ghso)} \\
	&\le
	C \tau (\tau+h^2)
	+
	(1+C\tau)\|e_{h,\#}^{m}\|_{L^2_h(\Ghso)}
	+ C\tau \| \delta_\tau\ehm \|_{L^2(\Ghsm)}
	,
\end{align*}
where $\|\hat e_h^m \|_{L^2_h(\Ghso)}$ is controlled by \eqref{eq:err_fin0} and the uniform boundedness of the shape regularity constants.
Iterating this inequality in $m$, or equivalently applying the discrete Gr\"onwall inequality, gives
\begin{align*}
	\max_{0\leq m\leq l+1}
	\|e_{h,\#}^{m} \|_{L^2(\Ghso)}
	&\le
	C (\tau+h^2)
	+
	C \sum_{m=0}^l \tau \| \delta_\tau\ehm \|_{L^2(\Ghsm)}
	\le C(\tau+h^2) .
\end{align*}
This completes the proof of Theorem~\ref{thm:trajectory} in view of the norm equivalence between $\Ghso$ and $\Ghsm$.

\appendix
\section{Notation}
\label{section:notation}
The notation below is frequently used in this article.
\begin{longtable}{p{1.1cm}p{13cm}}
	$\Gamma^m$:
	& 
	The exact smooth curve at time level $t=t_m$.\\
	
	$\Gamma_h^m$:
	&
	The numerically computed curve at time level $t=t_m$.\\
	
	$\bfx^{m}$: 
	&
	The nodal vector $\bfx^m=(x_1^m,\dots,x_J^m)^\top$ consisting of the positions of nodes on $\Gamma_h^m$.\\
	
	$\hat\bfx_*^{m}$: 
	&
	The distance projection of $\bfx^{m}$ onto the exact curve $\Gamma^m$, i.e., $\hat\bfx_*^{m}=(\hat x_{1,*}^m,\dots,\hat x_{J,*}^m)^\top$ with $\hat x_{j,*}^m=a^m(x_j^m)$. \\
	
	$\bfx_*^{m+1}$: 
	&
	The new position of $\hat\bfx_*^{m}$ evolving under curve-shortening flow (without additional tangential motion) from $t_m$ to $t_{m+1}$. \\
	
	$\Ghsm$: 
	&
	The piecewise polynomial curve which interpolates $\Gamma^m$ at the nodes in $\hat\bfx_*^{m}$.\\
	
	$\Gamma_{h,*}^{m+1}$: 
	&
	The piecewise polynomial curve which interpolates $\Gamma^{m+1}$ at the nodes in $\bfx_*^{m+1}$.\\
	
	$X_{h}^{m}$: 
	&
	The finite element function with nodal vector $\bfx^m$. It coincides with the identity map, i.e., ${\rm id}(x)=x$, when it is considered as a function on $\Gamma_{h}^m$. \\
	
	$X_{h}^{m+1}$: 
	&
	The finite element function with nodal vector $\bfx^{m+1}$. 
	When it is considered as a function on $\Gamma_{h}^m$, it represents the local flow map from $\Gamma_{h}^m$ to $\Gamma_{h}^{m+1}$.\\
	
	$\hat X_{h,*}^{m}$: 
	&
	The finite element function with nodal vector $\hat\bfx_*^m$. It coincides with the identity map, i.e., ${\rm id}(x)=x$, when it is considered as a function on $\Ghsm$. It coincides with the discrete flow map from $\hat\Gamma_{h,*}^0$ to $\Ghsm$ when it is considered as a function on $\hat\Gamma_{h,*}^0$.\\
	
	$X_{h,*}^{m+1}$: 
	&
	The finite element function with nodal vector $\bfx_*^{m+1}$. 
	When it is considered as a function on $\Ghsm$, it represents the local flow map from $\Ghsm$ to $\Gamma_{h,*}^{m+1}$.\\
	
	$X^{m+1}$: 
	&
	The global flow map from $\Gamma^0$ to $\Gamma^{m+1}$ under curve-shortening flow. \\
	
	$Y^{m+1}$: 
	&
	The local flow map from $\Gamma^m$ to $\Gamma^{m+1}$ under curve-shortening flow. \\
	
	$\hat e_{h}^m$: 
	&
	The finite element error function with nodal vector $\hat\bfe^m=\bfx^m-\hat\bfx_*^m$.\\ 
	
	$e_{h}^{m+1}$: 
	&
	The auxiliary error function with nodal vector $\bfe^{m+1}=\bfx^{m+1}-\bfx_*^{m+1}$.\\ 
	
	$n^m$: 
	&
	The unit normal vector on $\Gamma^m$. \\
	
	$n^m_*$: 
	&
	The unit normal vector of $\Gamma^m$ inversely lifted to a neighborhood of $\Gamma^m$ (including $\Ghsm$), i.e., $n^m_*=n^m\circ a^m$. \\
	
	$\hat n_{h,*}^m$: 
	&
	The normal vector on $\Ghsm$. \\
	
	$\bar n_{h,*}^m$: 
	&
	The averaged normal vector on $\Ghsm$, which is not necessarily unit. \\
	
	$n_h^m$: 
	&
	The normal vector on $\Gamma_{h}^m$. \\
	
	$\bar n_h^m$: 
	&
	The averaged normal vector on $\Gamma_{h}^m$, which is not necessarily unit. \\
	
	$\hat \mu_{h,*}^m$: 
	&
	The co-normal vector (unit tangent vector) on $\Ghsm$. \\
	
	$\mu_{h}^m$: 
	&
	The co-normal vector (unit tangent vector) on $\Ghm$. \\
	
	$\Nsm$: 
	&
	The normal projection operator $\Nsm=n^m_* (n^m_*)^\top$ on $\Ghsm$. \\
	
	$N^m$: 
	&
	The normal projection operator $N^m=n^m (n^m)^\top$ on $\Gamma^m$. 
	Thus $N^m$ is the lift of $\Nsm$ onto $\Gamma^m$, and $N_*^m$ is the extension of $N^m$ to a neighborhood of $\Gm$. \\
	
	$\Nhsm$: 
	&
	The normal projection operator $\Nhsm=\hat n^m_{h,*} (\hat n^m_{h,*})^\top$ on $\Ghsm$. \\
	
	$\Nbhsm$: 
	&
	The averaged normal projection operator $\Nbhsm= \frac{\bar n^m_{h,*}}{|\bar n^m_{h,*}|} (\frac{\bar n^m_{h,*}}{|\bar n^m_{h,*}|})^\top$ on $\Ghsm$. \\
	
	$T_*^m$: 
	&
	The tangential projection operator $T_*^m=I - n^m_* (n^m_*)^\top$ on $\Ghsm$. \\
	
	$T^m$: 
	&
	The tangential projection operator $T^m=I - n^m (n^m)^\top$ on $\Gamma^m$. 
	Thus $T^m$ is the lift of $\Tsm$ onto $\Gamma^m$. \\
	
	$\Thsm$: 
	&
	The tangential projection operator $\Thsm=I - \nhsm (\nhsm)^\top$ on $\Ghsm$. \\
	
	$\Tbhsm$: 
	&
	The averaged tangential projection operator $\Tbhsm=I - \frac{\bar n^m_{h,*}}{|\bar n^m_{h,*}|} (\frac{\bar n^m_{h,*}}{|\bar n^m_{h,*}|})^\top$ on $\Ghsm$. \\
	
	$\mathcal{N}(\Gamma_h^m)$: 
	&
	The collection of nodes of $\Gamma_h^m$. 
	%	\\
	%	
	%	$\mathcal{N}_b(\Gamma_h^m)$: 
	%	&
	%	The collection of endpoints (boundary points) of the elements of $\Gamma_h^m$. 
\end{longtable}

\section{Surface calculus formulas}

Let $\Gamma\subset\mathbb R^2$ be a smooth curve, possibly with
boundary. For $u\in C^\infty(\Gamma)$, we denote by
$\ud_i u$, $i=1,2$, the $i$th Cartesian component of
$\nabla_\Gamma u$. The corresponding product rule, chain rule, integration-by-parts formula, commutator identities, and evolution formulas are collected below. We refer to \cite[Lemma~3.14]{BL2025} and the references therein for their proofs.
\begin{lemma}\label{lemma:ud}
	Let $\Gamma$ and $ \Gamma^\prime$ be two smooth curves that are possibly open, such as smooth pieces of some finite element curves, and let $f, h \in C^\infty(\Gamma)$ and $g\in C^\infty(\Gamma^\prime; \Gamma)$ be given functions. Then the following results hold. 
	\begin{itemize}
		\item[1.]  $\ud_i(fh) = \ud_i f h + f\ud_i h$ on $\Gamma$.
		\item[2.] $\ud_i(f\circ g) = (\ud_j f\circ g)\, \ud_i g_j$ on $\Gamma'$.
		\item[3.] $\int_{\Gamma}f \ud_i h = -\int_{\Gamma}\ud_i f h + \int_{\Gamma}f h H n_i + \int_{\partial\Gamma}f h \mu_i$ where $n, \mu$ are the normal and co-normal (tangential) directions, respectively, and $H:=\ud_i n_i$ (with the Einstein notation) is the mean curvature, i.e. the trace of the second fundamental form.
		\item[4.] $\ud_i \ud_j f = \ud_j \ud_i f + n_i H_{jl} \ud_l f -  n_j H_{il} \ud_l f$, where $H_{ij} := \ud_i n_j = \ud_j n_i$.
		\item[5.] If $\Gamma$ evolves under the velocity field $v$, and $G_T := \bigcup_{t\in [0, T]}\Gamma(t) \times \{t\}$, then 
		$$\md(\ud_i f) = \ud_i (\md f )- (\ud_i v_j - n_i n_l \ud_j v_l)\ud_j f \quad\forall\, f \in C^2(G_T) ,$$
		where $\md$ denotes the material derivative with respect to $v$.
		\item[6.] If $f, h \in C^2(G_T)$ then 
		$$\frac{\d}{\d t}\int_{\Gamma} f h = \int_{\Gamma} \md f h + \int_{\Gamma} f\md h + \int_{\Gamma} f h (\nabla_\Gamma\cdot v).$$
		The divergence is defined as $\nabla_\Gamma\cdot v := \ud_i v_i$, which coincides with the intrinsic divergence on the curve if $v$ is a tangential vector field on $\Gamma$. Since the Lagrange interpolation commutes with the material time derivative, it is straightforward to check in the local coordinates that an analogous result also holds for the mass lumping integral, i.e., 
		$$
		\frac{\d}{\d t}\int_{\Gamma_h}^h \tilde f \tilde h = \int_{\Gamma_h}^h \md \tilde f \tilde h + \int_{\Gamma_h}^h \tilde f\md \tilde h + \int_{\Gamma_h}^h \tilde f \tilde h (\nabla_{\Gamma_h}\cdot v_h) ,
		$$
		where $\Gamma_h$ is a finite element curve moving with polynomial velocity $v_h\in [S_h(\Gamma_h)]^2$ (mass lumping is well defined on $\Gamma_h$), and $\tilde f,\tilde h$, defined on $\bigcup_{t\in [0, T]}\Gamma_h(t)\times\{t\}$, are continuous in space and continuously differentiable along the mesh trajectories.  
		\item[7.] The evolution of the unit normal vector $n$ of the curve $\Gamma$ with respect to the velocity field $v$ satisfies the following relation: 
		$$
		\md n_i = -\ud_i v_j n_j .
		$$
	\end{itemize}
\end{lemma}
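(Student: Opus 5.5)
The plan is to derive every item from the extension definition of the tangential gradient. For $f\in C^\infty(\Gamma)$ take any smooth extension $\bar f$ to a neighbourhood of $\Gamma$ and set $\ud_i f:=(\delta_{ij}-n_in_j)\partial_j\bar f$. I would first check that this is independent of the extension: two extensions differ by a function vanishing on $\Gamma$, whose ambient gradient is normal. With this in hand, item 1 is the ambient Leibniz rule followed by tangential projection. For item 2, extend $f$ off $\Gamma$ and apply the ambient chain rule to $\bar f\circ g$, so that $\partial_k(\bar f\circ g)=(\partial_j\bar f\circ g)\partial_k g_j$. Projecting onto the tangent of $\Gamma'$ gives $(\partial_j\bar f\circ g)\ud_i g_j$. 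Since $g$ maps into $\Gamma$, the vector $\ud_i g$ is tangent to $\Gamma$, so $\partial_j\bar f$ may be replaced by $\ud_j f$.

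For item 3, I would apply the divergence theorem on the curve to the tangential field $fh\,(e_i-n_in)$ and use $\nabla_\Gamma\cdot n=H$. Expanding the divergence gives
\[
\ud_j\big(fh(\delta_{ij}-n_in_j)\big)=\ud_i(fh)-fhHn_i-fh\,n_j\ud_jn_i .
\]
The last term vanishes because $n_j\ud_j=0$. The boundary term comes from the co-normal $\mu$.

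For item 4, I would take the normal extension of $f$, constant along normal lines, and of $n$. Then $\ud_i f=\partial_i\bar f$ and $n_j\partial_j\partial_i\bar f=0$ on $\Gamma$. A direct expansion gives
\[
\ud_i\ud_jf=\partial_i\partial_j\bar f-n_i n_k\partial_k\partial_j\bar f-\ud_i(n_jn_l)\partial_l\bar f .
\]
After antisymmetrising in $(i,j)$, only the terms with $H_{il}=\ud_in_l$ survive. This uses $n_l\ud_lf=0$ and the symmetry $H_{ij}=H_{ji}$, which follows from $n=\nabla d$ for the signed distance $d$.

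Item 7 comes from differentiating the identities $n\cdot n=1$ and $n\cdot\partial_\xi X=0$ along the flow $X(\xi,t)$ with $\partial_tX=v$. This gives $\md n\cdot n=0$ and $\md n\cdot\partial_\xi X=-n\cdot\partial_\xi v$. In index form this reads $\md n_i=-\ud_iv_jn_j$; the tangential part of the formula is fixed by the second identity and the normal part by the first. For item 5, I would write $\ud_if$ in the parametrization as $\partial_\xi f\,\partial_\xi X_i/|\partial_\xi X|^2$ and differentiate in $t$, using $\md\partial_\xi=\partial_\xi\md$. The resulting terms are $\ud_i\md f$, a term $-2\,\ud_jf\,\mu_j\mu_i\,(\mu\cdot\partial_s v)$ from the metric, and a term from $\partial_t\partial_\xi X_i$. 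Regrouping them gives $-(\ud_iv_j-n_in_l\ud_jv_l)\ud_jf$, and item 7 is used to identify the normal part. This regrouping is the step I expect to be the most error-prone: the index placement in the commutator has to match the stated form exactly, and I would verify it by testing with $f=x_k$.

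For item 6, I would change variables to the parameter domain, where the length element satisfies $\md|\partial_\xi X|=(\nabla_\Gamma\cdot v)|\partial_\xi X|$; the product rule then gives the formula. For the mass-lumped version on $\Gamma_h$, I would argue edgewise. The lumped integral over an edge $K$ is $\frac{|K|}{2}\sum_{p\in\mathcal N(K)}\tilde f(p)\tilde h(p)$. The nodes move with the mesh, so the time derivative of each nodal value is its material derivative. Since $v_h$ is affine on $K$, $\frac{\d}{\d t}|K|=|K|\,\nabla_{\Gamma_h}\cdot v_h$ with a divergence that is constant on $K$. Reassembling these pieces gives the lumped analogue.
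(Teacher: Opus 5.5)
The paper gives no proof of this lemma and refers to \cite[Lemma~3.14]{BL2025}. Your overall route is the standard one: the extension definition of $\ud$, parametrisation for the evolution formulas, and an edgewise computation for mass lumping. Items 1--3, 5--7 and the lumped identity are correct. Your concern about item 5 is unfounded: the regrouping gives $\partial_sf\,\partial_sv_i-2\,\partial_sf\,\mu_i(\mu\cdot\partial_sv)=\partial_sf\,[\,n_i(n\cdot\partial_sv)-\mu_i(\mu\cdot\partial_sv)]$, which is exactly $-(\ud_iv_j-n_in_l\ud_jv_l)\ud_jf$, and item 7 is not needed there.

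\textbf{The gap is in item 4.} Your claim that $n_j\partial_j\partial_i\bar f=0$ on $\Gamma$ for the normal extension is false. $\bar f$ is constant along normal lines, but $\nabla\bar f$ is not. On a circle of radius $R$ with $f$ the polar angle, $\nabla\bar f=e_\theta/r$ and $\partial_r(e_\theta/r)=-e_\theta/R^2\neq0$.

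What is true is that, with $n=\nabla d$, one has $n_j\partial_j\bar f=0$ in the whole tubular neighbourhood. Differentiating this gives $n_j\partial_i\partial_j\bar f=-\partial_in_j\,\partial_j\bar f=-H_{ij}\ud_jf$ on $\Gamma$.

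Your displayed expansion is also incomplete. The product rule applied to $(\delta_{jl}-n_jn_l)\partial_l\bar f$ produces a fourth term,
$-n_jn_l(\delta_{ik}-n_in_k)\partial_k\partial_l\bar f=+n_jH_{il}\ud_lf$, which you omitted, presumably by the same false identity.

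The two slips cost $-n_iH_{jl}\ud_lf$ (from discarding $-n_in_k\partial_k\partial_j\bar f$) and $-n_jH_{il}\ud_lf$ (from the missing term). Their sum is symmetric in $(i,j)$ and cancels under antisymmetrisation; that is the only reason you land on the right formula. The unsymmetrised identity your argument produces, $\ud_i\ud_jf=\partial_i\partial_j\bar f-n_jH_{il}\ud_lf$, is wrong: contracting with $n_i$ gives $-H_{jl}\ud_lf\neq0$, whereas $n_i\ud_i\equiv0$. Conversely, if you keep the middle term of your three-term expansion and evaluate it correctly, you get twice the commutator.

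\textbf{The fix.} Since $n\cdot\nabla\bar f\equiv0$ near $\Gamma$, $\partial_j\bar f$ is itself an admissible extension of $\ud_jf$. Hence on $\Gamma$,
$\ud_i\ud_jf=(\delta_{ik}-n_in_k)\partial_k\partial_j\bar f=\partial_i\partial_j\bar f+n_iH_{jl}\ud_lf$,
where the last step uses $n_k\partial_k\partial_j\bar f=-H_{jk}\ud_kf$. Antisymmetrising and using the symmetry of $\partial_i\partial_j\bar f$ gives item 4. The whole commutator comes from the term you claimed vanishes.
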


\section{Super-approximation estimates}
\label{sec:super}

This appendix collects super-approximation estimates for mass lumping
and the projection-error framework; see
\cite{BL2024,BL2025,BGV2027}. Throughout this appendix,
\[
0\leq m\leq l,
\qquad
l\in\big\{0,\ldots,\lfloor T/\tau\rfloor\big\},
\qquad
p,q,r\in[1,\infty].
\]
\begin{lemma}\label{lemma:super_conv}
	The following estimates hold for any piecewise smooth function $f$ and finite element functions $\phi_h,v_h, w_h\in S_h(\Ghsm)$:
	\begin{align*}
		\| (1 - I_h)(f \phi_h) \|_{L^p(\Ghsm)} &\lesssim \| f \|_{W_h^{2,\infty}(\Ghsm)} h \| \phi_h \|_{L^{p}(\Ghsm)} , \notag\\
		\| \nabla_\Ghsm (1 - I_h)(f \phi_h) \|_{L^p(\Ghsm)} &\lesssim \| f \|_{W_h^{2,\infty}(\Ghsm)} h \| \phi_h \|_{W^{1,p}(\Ghsm)} ,\\
		\| (1 - I_h)(v_h w_h) \|_{L^p(\Ghsm)} &\lesssim  h^2 \| v_h \|_{W^{1,q}(\Ghsm)} \| w_h \|_{W^{1,r}(\Ghsm)} ,\\
		\| \nabla_{\Ghsm}(1 - I_h)(v_h w_h) \|_{L^p(\Ghsm)} &\lesssim  h \| v_h \|_{W^{1,q}(\Ghsm)} \| w_h \|_{W^{1,r}(\Ghsm)} ,
	\end{align*}
	whenever $1/p=1/q+1/r$.
\end{lemma}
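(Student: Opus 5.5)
The four estimates are purely local, so I will prove them element by element on a single straight edge $K\subset\Ghsm$ and then sum. Each edge is pulled back to $\hat K=[0,1]$ by the affine map $F_K$. Mesh shape regularity, through $\kappa_l$, together with the quasi-uniformity \eqref{P1} inherited from the initial mesh, makes all constants in the scaling uniform. The basic tool is the one-dimensional interpolation estimate behind Lemma~\ref{lemma:Ih}:
\[
\|(1-I_h)g\|_{L^p(K)}+h\|\nabla(1-I_h)g\|_{L^p(K)}\lesssim h^2\|\nabla^2 g\|_{L^p(K)} .
\]
Here $\nabla^2$ is the elementwise second tangential derivative along the straight edge.

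\emph{Products with a smooth factor.} For the first two estimates, note that $\phi_h$ is affine on $K$, so $\nabla^2\phi_h=0$. The product rule then gives
\[
\nabla^2(f\phi_h)=(\nabla^2 f)\phi_h+2\nabla f\,\nabla\phi_h .
\]
This yields
\[
\|(1-I_h)(f\phi_h)\|_{L^p(K)}\lesssim h^2\|f\|_{W^{2,\infty}(K)}\big(\|\phi_h\|_{L^p(K)}+\|\nabla\phi_h\|_{L^p(K)}\big).
\]
The inverse inequality $\|\nabla\phi_h\|_{L^p(K)}\lesssim h^{-1}\|\phi_h\|_{L^p(K)}$ then produces the factor $h\|\phi_h\|_{L^p}$. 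For the gradient estimate I use $h^{-1}$ times the same second-derivative bound without invoking the inverse inequality, which gives $h\|f\|_{W_h^{2,\infty}}\|\phi_h\|_{W^{1,p}(K)}$. Taking $p$-th powers and summing over $K$ (or taking the maximum when $p=\infty$) finishes these two estimates.

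\emph{Products of two finite element functions.} For $v_h w_h$, on each edge $\nabla^2(v_h w_h)=2\nabla v_h\,\nabla w_h$, and each factor is constant on $K$. The interpolation estimate gives
\[
\|(1-I_h)(v_hw_h)\|_{L^p(K)}\lesssim h^2\|\nabla v_h\nabla w_h\|_{L^p(K)},
\]
and likewise $h$ times this quantity for the gradient. Hölder's inequality with $1/p=1/q+1/r$ bounds $\|\nabla v_h\nabla w_h\|_{L^p(K)}$ by $\|\nabla v_h\|_{L^q(K)}\|\nabla w_h\|_{L^r(K)}$. Summing over elements and applying Hölder's inequality for sequences (with the obvious modification when an exponent is $\infty$) gives the global bound in terms of the $W^{1,q}$ and $W^{1,r}$ seminorms, which are dominated by the full norms.

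\emph{Main obstacle.} The only delicate point is bookkeeping, not analysis. The functions $f$ are only piecewise smooth, and when $f$ is defined by inverse lifting (e.g.\ $f=(H^mn^m)^{-\ell}$), its elementwise $W^{2,\infty}$ norm on $\Ghsm$ must be uniformly bounded. On a straight edge this follows from the chain rule, since $a^m$ is smooth and $F_K$ is affine. Also, because $I_h$ interpolates at the edge endpoints, the estimates never require continuity of derivatives across nodes.
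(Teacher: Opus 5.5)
Your elementwise argument is correct: the one-dimensional interpolation bound, the vanishing second derivative of the affine factor in the Leibniz rule, the inverse inequality for the first estimate, and H\"older with $1/p=1/q+1/r$ for the product of two finite element functions. The paper gives no proof of its own and only refers to \cite{BL2024,BL2025,BGV2027}; your argument is the standard super-approximation proof those references rely on, specialized here to straight edges and $k=1$, so there is nothing to add.
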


The following bounds are a direct consequence of Lemma~\ref{lemma:super_conv}.
\begin{lemma}\label{lemma:T<=N2}
	We have
	\begin{align}
		\| \Tsm \ehm \|_{L^p(\Ghsm)}
		&\lesssim
		h\| \ehm \|_{L^p(\Ghsm)}  , \notag\\
		\| \Tsm \ehm \|_{W^{1,p}(\Ghsm)}
		&\lesssim
		h\| \ehm \|_{W^{1,p}(\Ghsm)}
		\notag .
	\end{align}
\end{lemma}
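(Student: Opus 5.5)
The plan is to exploit the nodal orthogonality of $\ehm$ to $\Gm$ and reduce both bounds to the first two estimates of Lemma~\ref{lemma:super_conv}.

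\emph{Step 1: $\Tsm\ehm$ vanishes at every node.} Let $p\in\mathcal N(\Ghsm)$. By construction $p=a^m(x)$ for the corresponding node $x$ of $\Ghm$, so $p\in\Gm$. Hence
\[
\ehm(p)=x-a^m(x),
\]
which is the distance vector from $x$ to its closest point on $\Gm$. This vector is parallel to $n^m(a^m(x))=\nsm(p)$, and therefore $(\Tsm\ehm)(p)=0$. Since this holds at every node,
\[
I_h(\Tsm\ehm)=0,
\qquad\text{so}\qquad
\Tsm\ehm=(1-I_h)(\Tsm\ehm)\quad\text{on }\Ghsm .
\]
Here the product is understood componentwise: $\Tsm\ehm=\sum_j (\Tsm)_{\cdot j}\,(\ehm)_j$, with each $(\ehm)_j\in S_h(\Ghsm)$.

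\emph{Step 2: regularity of the coefficient.} Set $f=(\Tsm)_{ij}=\delta_{ij}-(\nsm)_i(\nsm)_j$. On each element $K=F_K(K^0_{\rm f})$, the map $F_K$ is affine with $\|F_K\|_{W^{1,\infty}}\le\kappa_l$, and $\nsm=n^m\circ a^m$ is smooth in a tubular neighborhood of $\Gm$. By the chain rule, only derivatives of $n^m\circ a^m$ enter, since $\nabla^2F_K=0$. Hence
\[
\|f\|_{W_h^{2,\infty}(\Ghsm)}\le C_{\kappa_l}.
\]

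\emph{Step 3: apply the super-approximation estimates.} Take $\phi_h=(\ehm)_j$ in the first two estimates of Lemma~\ref{lemma:super_conv} and sum over $i,j$. This gives
\[
\|\Tsm\ehm\|_{L^p(\Ghsm)}\lesssim h\|\ehm\|_{L^p(\Ghsm)},
\qquad
\|\nabla_\Ghsm\Tsm\ehm\|_{L^p(\Ghsm)}\lesssim h\|\ehm\|_{W^{1,p}(\Ghsm)} .
\]
Adding the $L^p$ bound to the gradient bound, and using $\|\ehm\|_{L^p}\le\|\ehm\|_{W^{1,p}}$, yields the $W^{1,p}$ estimate.

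The only point requiring care is Step 1, namely that the projected nodes lie exactly on $\Gm$, so that the displacement is exactly normal there. This is immediate from the definition of $\hat X_{h,*}^m$ via the distance retraction $a^m$. The remaining steps are routine.
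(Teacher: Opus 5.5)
Your proof is correct and follows the paper's own argument. The paper calls the lemma a direct consequence of Lemma~\ref{lemma:super_conv}, relying on the nodal orthogonality of $\ehm$ to $\Gm$. That orthogonality gives $I_h(\Tsm\ehm)=0$, hence $\Tsm\ehm=(1-I_h)(\Tsm\ehm)$, and the first two super-approximation estimates, applied with $f$ the entries of $\Tsm$, finish the proof exactly as you do.
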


A nonlinear version of Lemma~\ref{lemma:super_conv} follows from the chain and product rules in Lemma~\ref{lemma:ud}.
\begin{lemma}\label{lemma:super_conv-nonlinear}
	Let $f\in W^{3,\infty}(D)$ be defined on a connected open set $D\subset \R^2$, and let $\phi_h, \psi_h \in [S_h(\Ghso)]^2$ be vector-valued finite element functions with ranges contained in $D$. Then
	\begin{align*}
		\| (1 - I_h)(f\circ\phi_h - f\circ\psi_h) \|_{L^p(\Ghso)} &\leq C h \| \phi_h - \psi_h \|_{L^{p}(\Ghso)} ,
		\notag\\
		\| \nabla_\Ghso (1 - I_h)(f\circ\phi_h - f\circ\psi_h) \|_{L^p(\Ghso)} &\leq C h \| \phi_h - \psi_h \|_{W^{1,p}(\Ghso)}  .
	\end{align*}
	The triangle inequality and Lipschitz continuity consequently give
	\begin{align*}
		\| I_h(f\circ\phi_h - f\circ\psi_h) \|_{L^p(\Ghso)} &\leq C \| \phi_h - \psi_h \|_{L^{p}(\Ghso)} ,
		\notag\\
		\| I_h (f\circ\phi_h - f\circ\psi_h) \|_{W^{1,p}(\Ghso)} &\leq C \| \phi_h - \psi_h \|_{W^{1,p}(\Ghso)}  .
	\end{align*}
	The constant $C$ depends on $\| f \|_{W^{3,\infty}(D)}$, $\| \phi_h \|_{W^{1,\infty}(\Ghso)}$ and $\| \psi_h \|_{W^{1,\infty}(\Ghso)}$.
\end{lemma}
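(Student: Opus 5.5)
The plan is to reduce the nonlinear statement to the linear super-approximation estimates of Lemma~\ref{lemma:super_conv} through the fundamental theorem of calculus along the segment between $\psi_h$ and $\phi_h$. Set $\phi_h^s:=\psi_h+s(\phi_h-\psi_h)$, $s\in[0,1]$. This is again a finite element function in $[S_h(\Ghso)]^2$, with $W^{1,\infty}$ norm bounded by those of $\phi_h$ and $\psi_h$. We assume, as is implicit in the statement, that its range stays in $D$; if needed we replace $D$ by a convex neighborhood or use the tubular neighborhood setting of the paper. Then
\begin{align*}
f\circ\phi_h-f\circ\psi_h=\int_0^1(\nabla f\circ\phi_h^s)\,\d s\;(\phi_h-\psi_h)=:G\,w_h ,
\end{align*}
where $w_h:=\phi_h-\psi_h\in[S_h(\Ghso)]^2$ and $G:=\int_0^1\nabla f\circ\phi_h^s\,\d s$ is a matrix-valued, elementwise smooth function. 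Since $1-I_h$ is linear and commutes with the $s$-integral, it suffices to apply the first two estimates of Lemma~\ref{lemma:super_conv}, with $f$ replaced by the entries of $G$ and $\phi_h$ by the components of $w_h$. These estimates are stated on $\Ghsm$ but are purely elementwise, so they hold verbatim on the reference curve $\Ghso$.

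The key quantitative step is the elementwise bound $\|G\|_{W_h^{2,\infty}(\Ghso)}\le C$. On each element $\nabla^2_{\Ghso}\phi_h^s=0$ because $\phi_h^s$ is affine. The chain rule (Lemma~\ref{lemma:ud}, item~2) therefore gives
\begin{align*}
\nabla_{\Ghso}(\nabla f\circ\phi_h^s)&=\nabla^2 f\circ\phi_h^s[\nabla_{\Ghso}\phi_h^s],\\
\nabla^2_{\Ghso}(\nabla f\circ\phi_h^s)&=\nabla^3 f\circ\phi_h^s[\nabla_{\Ghso}\phi_h^s,\nabla_{\Ghso}\phi_h^s].
\end{align*}
Both are bounded by $\|f\|_{W^{3,\infty}(D)}(1+\|\phi_h\|_{W^{1,\infty}}+\|\psi_h\|_{W^{1,\infty}})^2$, which is exactly the dependence of $C$ in the statement. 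Lemma~\ref{lemma:super_conv} then yields
\begin{align*}
\|(1-I_h)(Gw_h)\|_{L^p}&\lesssim h\|w_h\|_{L^p},\\
\|\nabla_{\Ghso}(1-I_h)(Gw_h)\|_{L^p}&\lesssim h\|w_h\|_{W^{1,p}}.
\end{align*}

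For the consequences, write $I_h(f\circ\phi_h-f\circ\psi_h)=Gw_h-(1-I_h)(Gw_h)$. Then $\|Gw_h\|_{L^p}\le\|G\|_{L^\infty}\|w_h\|_{L^p}$, and the product rule gives $\|\nabla(Gw_h)\|_{L^p}\le\|G\|_{W^{1,\infty}_h}\|w_h\|_{W^{1,p}}$. This is the Lipschitz continuity; combined with the previous display and $h\le1$, it gives the last two bounds.

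The main obstacle is mild: one must confirm that Lemma~\ref{lemma:super_conv}, applied with a broken $W^{2,\infty}_h$ coefficient, needs no interelement continuity of the derivatives of $G$. This holds because $I_h$ acts elementwise and $G$ is continuous across nodes, since $\phi_h^s$ is continuous. If that lemma were only available for globally smooth $f$, I would instead reprove it directly on each element: the linear interpolation error of $Gw_h$ is bounded by $h^2\|\nabla^2(Gw_h)\|$, and $\nabla^2(Gw_h)=\nabla^2G\,w_h+2\nabla G\,\nabla w_h$. An inverse estimate on the single element then trades one power of $h$ for the $L^p$ norm of $w_h$ in the first bound.
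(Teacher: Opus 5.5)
Your proof is correct and follows the route the paper intends. The paper only remarks that the lemma follows from Lemma~\ref{lemma:super_conv} via the chain and product rules, and your representation $f\circ\phi_h-f\circ\psi_h=G(\phi_h-\psi_h)$, with $G$ bounded in the elementwise $W^{2,\infty}$ norm, supplies exactly the step the paper leaves implicit.

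Your caveat that the segments between $\psi_h(x)$ and $\phi_h(x)$ must lie in $D$ is a genuine extra hypothesis, not a formality. For $D$ a slit annulus and $f$ the angle function, two constant maps on opposite sides of the slit violate the Lipschitz bound. The hypothesis does hold in the paper's applications, where the two maps are pointwise close and lie near the exact curve, so the segments stay in the tubular neighborhood.
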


The approximation properties of Gauss--Lobatto quadrature yield the following estimate.
\begin{lemma}\label{lemma:super_conv2}
	Let $f$ be smooth on each element $K$ of $\Ghsm$. Suppose that the pullback $f\circ F_K $ vanishes at all Gauss--Lobatto points of the reference segment $K_{\rm f}^0$ for every element $K$ of $\Ghsm$. Then
	\begin{align}
		\Big|\int_\Ghsm f \d\xi \Big|
		\lesssim h^{2} \| f \|_{W^{2,1}_h(\Ghsm)} , \notag
	\end{align}
	where $\|\cdot\|_{W^{2,1}_h(\Ghsm)}$ denotes the piecewise $W^{2,1}$ norm.
\end{lemma}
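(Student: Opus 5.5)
The plan is to prove Lemma~\ref{lemma:super_conv2} element by element, using the fact that the lowest-order Gauss--Lobatto rule on a segment is the trapezoidal rule, whose nodes are the two endpoints. Fix an element $K\subset\Ghsm$ with affine parametrization $F_K:K_{\rm f}^0\to K$. Let $s\in[0,|K_{\rm f}^0|]$ be arclength on the straight reference segment $K_{\rm f}^0$, and set $g:=f\circ F_K$. Since $F_K$ is affine, its Jacobian $J_K:=|\nabla_{K_{\rm f}^0}F_K|$ is constant on $K$. The change of variables then gives
\[
\int_K f\,\d\xi=J_K\int_{K_{\rm f}^0} g\,\d s .
\]
By hypothesis, $g$ vanishes at both endpoints of $K_{\rm f}^0$. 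Hence $g=(1-I_{K_{\rm f}^0})g$, i.e.\ $g$ coincides with its own linear interpolation error.

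The key one-dimensional step is a Peano-kernel representation. For a function vanishing at both endpoints of an interval of length $L$,
\[
g(s)=-\int_0^L G(s,t)\,g''(t)\,\d t ,
\]
where $G$ is the Green's function of $-\d^2/\d s^2$ with homogeneous Dirichlet data. This kernel satisfies $0\le G(s,t)\le L$. Integrating in $s$ gives
\[
\Big|\int_0^L g\,\d s\Big|\le L^2\|g''\|_{L^1(0,L)} .
\]
The same bound also follows from Lemma~\ref{lemma:Ih} with $p=1$ on a single element, applied to $(1-I_h)g$. Here $L=|K_{\rm f}^0|\lesssim h$ by the quasi-uniformity assumption \eqref{P1}.

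Next, I transfer the second derivative back to $K$. Because $F_K$ is affine, the chain rule (Lemma~\ref{lemma:ud}, item~2) applied twice gives
\[
g''=J_K^2\,(\partial_\xi^2 f)\circ F_K ,
\]
where $\xi$ denotes arclength along $K$; no first-order term appears. Changing variables back,
\[
\|g''\|_{L^1(K_{\rm f}^0)}=J_K\,\|\partial_\xi^2 f\|_{L^1(K)} .
\]
Collecting the factors, I obtain
\[
\Big|\int_K f\,\d\xi\Big|\le J_K^2\,|K_{\rm f}^0|^2\,\|f\|_{W^{2,1}(K)} .
\]
Since $J_K\le\kappa_l$ by \eqref{PP}, this yields $|\int_K f\,\d\xi|\le C_{\kappa_l}h^2\|f\|_{W^{2,1}(K)}$. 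Summing over all elements $K\subset\Ghsm$ and using the triangle inequality gives the stated bound with the piecewise norm $\|f\|_{W^{2,1}_h(\Ghsm)}$.

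I do not expect a genuine obstacle. The only point requiring care is the bookkeeping of scaling factors: one must check that the constant depends only on $\kappa_l$ (through $J_K$) and on \eqref{P1}, and not on any second derivative of $F_K$. That dependence is excluded precisely because the elements are straight, so $F_K$ is affine. The argument also uses only the piecewise smoothness of $f$ on each closed element, consistent with the elementwise definition of $W^{2,1}_h$.
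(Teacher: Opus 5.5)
Your proof is correct and follows essentially the paper's route. The paper only invokes the error of the Gauss--Lobatto (here trapezoidal) quadrature on each straight element, and your Peano-kernel representation for a function vanishing at both endpoints is the standard proof of exactly that estimate. As a minor simplification, the factors $J_K^2|K_{\rm f}^0|^2$ collapse to $|K|^2$, so you could work directly with arclength on $K$ and obtain $|\int_K f\,\d\xi|\le\tfrac18|K|^2\|\partial_\xi^2 f\|_{L^1(K)}$ without passing through the reference map.
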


The following estimate follows directly from Lemma~\ref{lemma:super_conv2}.
\begin{lemma}\label{Lemma-GLW}
	For any smooth function $f$ on $\Gm$, we have
	\begin{align*}
		\Big| \int_{\Gm} \nabla_{\Gm} (f - (I_h f^{-\ell})^\ell) \cdot  \nabla_{\Gm}  \phi_h^\ell \Big|
		\lesssim
		h^{2} \|f\|_{H^{2}(\Gm)} \|\phi_h\|_{H^1(\Ghsm)}
		\quad\forall\,\phi_h\in S_h(\Ghsm) .
	\end{align*}
\end{lemma}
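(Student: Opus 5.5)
The plan is to move the integral from $\Gm$ to the polygonal consistency curve $\Ghsm$, where it vanishes exactly, and to control the transfer error by Lemma~\ref{lemma:geo-pert}. Write $w:=f^{-\ell}-I_hf^{-\ell}$, a function on $\Ghsm$. Since lifting and inverse lifting are mutually inverse, $w^\ell=f-(I_hf^{-\ell})^\ell$. I then split
\begin{align*}
\int_{\Gm}\nabla_{\Gm}w^\ell\cdot\nabla_{\Gm}\phi_h^\ell
&=\int_{\Ghsm}\nabla_{\Ghsm}w\cdot\nabla_{\Ghsm}\phi_h
\\
&\quad+\Big(\int_{\Gm}\nabla_{\Gm}w^\ell\cdot\nabla_{\Gm}\phi_h^\ell-\int_{\Ghsm}\nabla_{\Ghsm}w\cdot\nabla_{\Ghsm}\phi_h\Big)
\end{align*}
and treat the two parts separately.

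\emph{Step 1 (exact cancellation on $\Ghsm$).} On each straight element $K\subset\Ghsm$, the finite element function $\phi_h$ is affine, so $\nabla_{\Ghsm}\phi_h$ is a constant vector on $K$. By \eqref{eq:interpolation-of-distance-projection}, every vertex of $\Ghsm$ lies on $\Gm$, so $w=(1-I_h)f^{-\ell}$ vanishes at both endpoints of $K$. Integration by parts along the segment $K$ therefore gives
\[
\int_K\nabla_{\Ghsm}w\cdot\nabla_{\Ghsm}\phi_h=\nabla_{\Ghsm}\phi_h|_K\cdot\int_K\nabla_{\Ghsm}w=0 .
\]
No curvature term appears because $K$ is flat. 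Summing over all elements, the first part vanishes identically.

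\emph{Step 2 (geometric perturbation).} The second bound in Lemma~\ref{lemma:geo-pert}, taken with $p=q=2$, bounds the second part by
\[
C_{\kappa_l}h^2\|\nabla_{\Ghsm}w\|_{L^2(\Ghsm)}\|\nabla_{\Ghsm}\phi_h\|_{L^2(\Ghsm)} .
\]
It then suffices to show $\|\nabla_{\Ghsm}w\|_{L^2(\Ghsm)}\lesssim\|f\|_{H^2(\Gm)}$. Lemma~\ref{lemma:Ih} in fact gives the stronger bound $\|\nabla_{\Ghsm}w\|_{L^2(\Ghsm)}\lesssim h\|f^{-\ell}\|_{H_h^2(\Ghsm)}$, although only an $O(1)$ bound is needed here.

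\emph{Main point to check.} The one step that needs care is the broken-norm bound
\[
\|f^{-\ell}\|_{H^2_h(\Ghsm)}\lesssim\|f\|_{H^2(\Gm)} .
\]
On each element, $f^{-\ell}=f\circ a^m|_K$. I would prove the bound with the chain rule: $a^m$ is smooth in the tubular neighbourhood $U_\delta(\Gm)$, $K$ is straight so its parametrization has no second derivatives, and $a^m|_K:K\to a^m(K)$ is a diffeomorphism with Jacobian bounded above and below uniformly in terms of $\kappa_l$. These facts give elementwise control of the first and second derivatives of $f^{-\ell}$ by those of $f$ on $a^m(K)$. Summing over elements yields the bound.

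Combining Steps 1 and 2 gives
\[
\Big|\int_{\Gm}\nabla_{\Gm}\big(f-(I_hf^{-\ell})^\ell\big)\cdot\nabla_{\Gm}\phi_h^\ell\Big|\lesssim h^2\|f\|_{H^2(\Gm)}\|\phi_h\|_{H^1(\Ghsm)} ,
\]
with room to spare: the argument actually delivers $h^3$. Alternatively, the transfer error could be handled by pulling back to the reference segments and applying Lemma~\ref{lemma:super_conv2}, as the paper suggests. The route above avoids quadrature altogether and relies only on Lemmas~\ref{lemma:geo-pert} and~\ref{lemma:Ih}.
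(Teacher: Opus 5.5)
Your argument is correct, and it follows a different route from the one the paper indicates.

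The paper states only that the estimate follows directly from the Gauss--Lobatto quadrature bound of Lemma~\ref{lemma:super_conv2}. That is a super-approximation argument built on the vanishing of the interpolation error at the quadrature points of each element, in the style of the higher-order analyses it cites.

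You instead transfer the integral to $\Ghsm$ first. There it vanishes identically, by the same elementwise cancellation the paper uses to show $\mathscr D_{33}^m=0$ in the proof of Lemma~\ref{proposition-consistency}. You then pay only the geometric perturbation of Lemma~\ref{lemma:geo-pert}. Applying that lemma to the non-polynomial function $w$ is legitimate, and it is exactly how the paper itself uses it for $(Y^{m+1}-{\rm id}_{\Gm})^{-\ell}$ and $({\rm id}_{\Gm})^{-\ell}$.

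Your route buys simplicity and a sharper result. It needs only Lemmas~\ref{lemma:geo-pert} and~\ref{lemma:Ih}, together with the standard chain-rule bound $\|f^{-\ell}\|_{H_h^2(\Ghsm)}\lesssim\|f\|_{H^2(\Gm)}$. It also shows that the left-hand side is in fact $O(h^3)$, since the only surviving contribution is the $O(h^2)$ geometric error multiplied by $\|\nabla_{\Ghsm}w\|_{L^2(\Ghsm)}=O(h)$.

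What it gives up is generality. The exact cancellation in Step~1 relies on straight elements and an elementwise constant $\nabla_{\Ghsm}\phi_h$, so it is specific to $k=1$. The quadrature formulation is the one that extends to curved isoparametric elements of degree $k$, where Gauss--Lobatto exactness yields $h^{k+1}$.

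A minor point: $w$ vanishes at the vertices simply because $I_h$ interpolates there. This does not require \eqref{eq:interpolation-of-distance-projection} or the fact that the vertices lie on $\Gm$.
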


Lemma~\ref{lemma:T<=N2} also yields the following cancellation estimate for the tangential stiffness bilinear form; see \cite[Lemma C.6]{Bai2026}.
\begin{lemma}\label{lemma:AT-sup}
	We have
	\begin{align}
		&| \mathscr A_{h, *}^T(I_h\Nsm\psi_h,\phi_h) |
		+
		| \mathscr A_{h, *}^T(I_h\Nbhsm\psi_h,\phi_h) |
		+
		| \mathscr A_{h, *}^N(I_h\Tsm\psi_h,\phi_h) |
		+
		| \mathscr A_{h, *}^N(I_h\Tbhsm\psi_h,\phi_h) |
		\notag\\
		&\lesssim \min\Big\{\| \psi_h \|_{L^2({\Ghsm})}  \| \phi_h \|_{H^1({\Ghsm})} , \| \psi_h \|_{H^1({\Ghsm})}  \| \phi_h \|_{L^2({\Ghsm})}\Big\}
		. \notag
	\end{align}
\end{lemma}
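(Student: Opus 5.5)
The plan is to use the rank-one structure of the tangential gradient on a polygonal curve and reduce everything to elementwise computations on $\Ghsm$. On each edge $K$ with unit tangent $\hat\mu=\hat\mu_{h,*}^m$ and normal $\nhsm$, write $\nabla_\Ghsm u=\hat\mu\,(\partial_s u)^\top$. Then
\[
\mathscr A_{h,*}^T(u,v)=\sum_K\int_K(\partial_s u\cdot\hat\mu)(\partial_s v\cdot\hat\mu),
\qquad
\mathscr A_{h,*}^N(u,v)=\sum_K\int_K(\partial_s u\cdot\nhsm)(\partial_s v\cdot\nhsm).
\]
It therefore suffices to show that $\partial_s(I_h\Nsm\psi_h)\cdot\hat\mu$ (respectively $\partial_s(I_h\Tsm\psi_h)\cdot\nhsm$) is a lower-order quantity. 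The averaged projectors $\Nbhsm,\Tbhsm$ are reduced to $\Nsm,\Tsm$ at the end.

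\emph{Step 1 (splitting off the interpolation error).} Write $I_h\Nsm\psi_h=\Nsm\psi_h-(1-I_h)(\Nsm\psi_h)$. By Lemma~\ref{lemma:super_conv}, $\|\nabla_\Ghsm(1-I_h)(\Nsm\psi_h)\|_{L^2}\lesssim h\|\psi_h\|_{H^1}$. Combined with the inverse inequality, this contribution is bounded by both $\|\psi_h\|_{L^2}\|\phi_h\|_{H^1}$ and $h\|\psi_h\|_{H^1}\|\nabla\phi_h\|_{L^2}\lesssim\|\psi_h\|_{H^1}\|\phi_h\|_{L^2}$. For the smooth part, expand
\[
\partial_s(\nsm(\nsm\cdot\psi_h))\cdot\hat\mu=(\partial_s\nsm\cdot\hat\mu)(\nsm\cdot\psi_h)+(\nsm\cdot\hat\mu)\,\partial_s(\nsm\cdot\psi_h),
\]
and note that $|\nsm\cdot\hat\mu|\lesssim h$ by \eqref{normal-intpl}. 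The second term thus carries a factor $h$, and $h\|\partial_s\psi_h\|_{L^2}\|\partial_s\phi_h\|_{L^2}$ gives either form of the minimum after one inverse inequality. The first term is $\lesssim\|\psi_h\|_{L^2}\|\phi_h\|_{H^1}$ directly, which yields the first alternative.

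\emph{Step 2 (the second alternative).} I expect the main obstacle to be the bound by $\|\psi_h\|_{H^1}\|\phi_h\|_{L^2}$ for the term $\int_K(\partial_s\nsm\cdot\hat\mu)(\nsm\cdot\psi_h)(\partial_s\phi_h\cdot\hat\mu)$, since the derivative sits on $\phi_h$ at order one. I will integrate by parts on each edge, using that $\hat\mu$ is constant there. The interior term is bounded by $\|\psi_h\|_{H^1}\|\phi_h\|_{L^2}$. The nodal boundary terms combine, at each vertex $p$, into a jump of $(\partial_s\nsm\cdot\hat\mu)\,\hat\mu$ across $p$ multiplied by $(\nsm\cdot\psi_h)(p)\,\phi_h(p)$. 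Since $\nsm$ is smooth and $|\hat\mu(p+)-\hat\mu(p-)|\lesssim h$, the jump is $O(h)$. Quasi-uniformity and discrete norm equivalence (Lemma~\ref{lemma:lump}) then give $\sum_p h|\psi_h(p)||\phi_h(p)|\lesssim\|\psi_h\|_{L^2}\|\phi_h\|_{L^2}$. The case $\mathscr A_{h,*}^N(I_h\Tsm\psi_h,\phi_h)$ is handled identically. The expansion there is $\partial_s(\Tsm\psi_h)\cdot\nhsm=(\partial_s\Tsm\,\psi_h)\cdot\nhsm+\partial_s\psi_h\cdot\Tsm\nhsm$, and $|\Tsm\nhsm|=|\nhsm-(\nsm\cdot\nhsm)\nsm|\lesssim h$ supplies the small factor.

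\emph{Step 3 (averaged projectors).} For $\Nbhsm$ and $\Tbhsm$, write $I_h\Nbhsm\psi_h=I_h\Nsm\psi_h+I_h[(\Nbhsm-\Nsm)\psi_h]$. By Lemma~\ref{lemma:n_bar_app} and \eqref{eq:nhs_bar_len0}, $|\Nbhsm-\Nsm|\lesssim h^2$ at the nodes. The inverse inequality then gives $\|I_h(\Nbhsm-I_h\Nsm)\|_{W^{1,\infty}}\lesssim h$. Together with Lemma~\ref{lemma:super_conv} for $(1-I_h)\Nsm$, the difference $I_h[(\Nbhsm-\Nsm)\psi_h]$ has $H^1$-norm $\lesssim h\|\psi_h\|_{H^1}+\|\psi_h\|_{L^2}$ and $L^2$-norm $\lesssim h^2\|\psi_h\|_{L^2}$. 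Its contribution to either bilinear form is therefore bounded, after one inverse inequality on whichever factor is needed, by both alternatives of the minimum. Collecting the four cases proves the lemma.
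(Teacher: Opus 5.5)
Your proof is correct and uses essentially the mechanism behind the paper's one-line justification via Lemma~\ref{lemma:T<=N2} and \cite[Lemma C.6]{Bai2026}:
\begin{itemize}
\item the tangential part of a nodally normal interpolant carries a factor $h$, which you get from the splitting $I_h=1-(1-I_h)$, Lemma~\ref{lemma:super_conv} and $|\nsm\cdot\hat\mu|\lesssim h$;
\item the remaining $O(1)$ curvature term is moved onto $\psi_h$ by elementwise integration by parts, with $O(h)$ vertex jumps;
\item the averaged projectors are reduced to $\Nsm,\Tsm$ through Lemma~\ref{lemma:n_bar_app}.
\end{itemize}

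Step~3 needs one correction. Your bound $h\|\psi_h\|_{H^1}+\|\psi_h\|_{L^2}$ on the $H^1$ norm of $w_h:=I_h[(\Nbhsm-\Nsm)\psi_h]$ is too weak for the second alternative: the $\|\psi_h\|_{L^2}\|\phi_h\|_{H^1}$ term only becomes $h^{-1}\|\psi_h\|_{L^2}\|\phi_h\|_{L^2}$ after an inverse estimate on $\phi_h$. Instead, apply the inverse inequality to $w_h$ itself, which gives $\|w_h\|_{H^1}\lesssim h^{-1}\|w_h\|_{L^2}\lesssim h\|\psi_h\|_{L^2}$. A second inverse inequality, on $\phi_h$, then yields both alternatives.
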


\section{Discrete norms and Poincar\'{e} inequalities}\label{sec:disc-norm}

The positivity of the Gauss--Lobatto quadrature weights ensures that the discrete $L^p$ quantity
\begin{align}
	\| v \|_{L^p_h(\Ghsm)}  &:= \Big( \int_\Ghsm^h |v|^p \Big)^{\frac1p}
	= \Big( \sum\limits_{K \subset \Ghsm} \int_{K_{\rm f}^0} I_{K_{\rm f}^0} \big( |v \circ F_K |^p |\nabla_{K_{\rm f}^0} F_K| \big) \Big)^{\frac1p}\quad p\in[1,\infty),
	\notag\\
	\| v \|_{L^\infty_h(\Ghsm)}  &:= \| v \|_{L^\infty(\Ghsm)}  , \notag
\end{align}
defines a norm on the finite element space $S_h(\Ghsm)$. Indeed, $\| v \|_{L^p_h(\Ghsm)}=0$ if and only if $v=0$ at every node of $\Ghsm$. The same discrete $L^p$ quantity is also well defined for piecewise continuous functions on $\Ghsm$. The following lemma summarizes its basic properties; see \cite[Lemma 3.7 and Eq. (3.46)]{BL2025}.

\begin{lemma}\label{lemma:lump}
	For $p\in[1,\infty]$, the following relations hold for every finite element function $v_h\in S_h(\Ghsm)$ and all piecewise continuous functions $w_1, w_2, w_3$ on $\Ghsm$:
	\begin{align}
		\| v_h \|_{L^p_h(\Ghsm)} &\sim \| v_h \|_{L^p(\Ghsm)} , \notag\\
		\| \nabla_\Ghsm v_h \|_{L^p_h(\Ghsm)} &\sim \| \nabla_\Ghsm v_h \|_{L^p(\Ghsm)} , \notag\\
		\Big|\int_\Ghsm^h w_1 w_2 w_3 \Big| &\lesssim \| w_1 \|_{L^\infty(\Ghsm)} \| w_2 \|_{L^2_h(\Ghsm)} \| w_3 \|_{L^2_h(\Ghsm)}  \notag .
	\end{align}
	Moreover,
	\begin{align}
		\| I_h\Nbhsm \ehm \|_{L^p(\Ghsm)}
		\sim
		\| \ehm \cdot \nbhsm \|_{L_h^p(\Ghsm)} . \notag
	\end{align}
\end{lemma}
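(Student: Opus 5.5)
The plan is to reduce every assertion to an elementwise statement on the reference segment $\hat K=[0,1]$ and then sum over elements. On an edge $K\subset\Ghsm$ with affine parametrization $F_K$, the Jacobian $|\nabla_{K_{\rm f}^0}F_K|$ is constant. By \eqref{PP} and the quasi-uniformity \eqref{P1}, this constant is comparable to $|K|/|K_{\rm f}^0|$, with constants depending only on $\kappa_l$. Hence both $\int_K^h|v|^p$ and $\int_K|v|^p$ are the same positive multiple of the corresponding reference quantity. For the reference quantities we write
\[
\int_{\hat K}^h |\hat v|^p=\tfrac12\big(|\hat v(0)|^p+|\hat v(1)|^p\big)
\qquad\text{and}\qquad
\int_{\hat K}|\hat v|^p .
\]

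\emph{First equivalence.} On the finite-dimensional space $[\mathbb P_1(\hat K)]^q$, both $\big(\tfrac12(|\hat v(0)|^p+|\hat v(1)|^p)\big)^{1/p}$ and $\|\hat v\|_{L^p(\hat K)}$ are norms, the first because a linear function vanishing at both endpoints is zero. They are therefore equivalent. The constant can be taken uniform in $p\in[1,\infty]$: both quantities lie between $2^{-1}\max_{\hat K}|\hat v|$ and $\max_{\hat K}|\hat v|$ up to a fixed factor, since $|\hat v|$ attains its maximum at an endpoint and is at least half the maximum on a subinterval of length $\ge 1/4$. Scaling back by the Jacobian and summing over $K$ gives $\|v_h\|_{L^p_h(\Ghsm)}\sim\|v_h\|_{L^p(\Ghsm)}$. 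For $p=\infty$ the two quantities coincide by definition.

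\emph{Second equivalence.} The function $\nabla_\Ghsm v_h$ is constant on each edge. The lumped integral of a constant is exact, because the trapezoidal rule integrates constants exactly. So the two gradient norms are in fact equal elementwise.

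\emph{Triple-product bound.} By definition, $\int_\Ghsm^h w_1w_2w_3=\sum_K\frac{|K|}{2}\sum_{p\in\mathcal N(K)}(w_1w_2w_3)(p^{\pm})$, where one-sided traces are used for piecewise continuous functions. The weights $|K|/2$ are positive. Bounding $|w_1(p^\pm)|\le\|w_1\|_{L^\infty}$ and applying the weighted Cauchy--Schwarz inequality to the remaining sum gives the claim directly.

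\emph{Final equivalence.} This is the step requiring the geometric input. At each node $p$ we have
\[
(I_h\Nbhsm\ehm)(p)=\frac{\ehm(p)\cdot\nbhsm(p)}{|\nbhsm(p)|^2}\,\nbhsm(p),
\]
so $|(I_h\Nbhsm\ehm)(p)|=|\ehm(p)\cdot\nbhsm(p)|/|\nbhsm(p)|$. By \eqref{eq:nhs_bar_len0}, $1-Ch^2\le|\nbhsm(p)|\le1$, so the nodal values of $|I_h\Nbhsm\ehm|$ and of $\ehm\cdot\nbhsm$ agree up to a factor in $[1,2]$ for small $h$. Since the lumped $L^p$ norm depends only on nodal values with positive weights, this gives $\|I_h\Nbhsm\ehm\|_{L^p_h}\sim\|\ehm\cdot\nbhsm\|_{L^p_h}$. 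Combining with the first equivalence, applied to the finite element function $I_h\Nbhsm\ehm$, completes the proof.

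The only mildly delicate points are the uniformity of the constants in $p$ and in $\kappa_l$, and the use of the lower bound $|\nbhsm|\ge 1-Ch^2$, which is where Lemma~\ref{lemma:n_bar_app}-type geometry enters through \eqref{eq:nhs_bar_len0}.
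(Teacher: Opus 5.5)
Your proof of the first three relations is correct. It is the standard elementwise scaling argument behind the result the paper cites; the paper gives no proof and refers to \cite[Lemma~3.7 and Eq.~(3.46)]{BL2025}. For straight $\mathbb P_1$ edges this argument yields universal constants. You do not need \eqref{PP} or \eqref{P1} there: the Jacobian of an affine parametrization is exactly $|K|/|K_{\rm f}^0|$ and occurs identically in both integrals, and \eqref{P1} is only assumed for the initial mesh anyway. For $p<\infty$ your proof of the last relation is also complete, and it holds for every vector-valued finite element function, not only $\ehm$.

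There is, however, a gap at $p=\infty$ in the last relation. The paper defines $\|v\|_{L^\infty_h(\Ghsm)}:=\|v\|_{L^\infty(\Ghsm)}$ also for piecewise continuous $v$, and you used this yourself in the first equivalence. But $\ehm\cdot\nbhsm$ is piecewise quadratic, not an element of $S_h(\Ghsm)$. Its $L^\infty_h$ norm is therefore a supremum over whole edges, and your claim that the lumped norm ``depends only on nodal values'' is false there.

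The bound $\|I_h\Nbhsm\ehm\|_{L^\infty}\le 2\|\ehm\cdot\nbhsm\|_{L^\infty}$ still holds. The converse cannot follow from nodal information alone. Let $R$ be rotation by $\pi/2$, and let $w_h$ have nodal values $R\nbhsm(p_1)$ and $-R\nbhsm(p_2)$ at the endpoints of one edge and $0$ elsewhere. Then $I_h\Nbhsm w_h\equiv 0$. Yet at the point with affine coordinate $s\in[0,1]$ on that edge,
\[
w_h\cdot\nbhsm=2s(1-s)\,R\nbhsm(p_1)\cdot\nbhsm(p_2),
\]
which is nonzero (of order $h$ where the curvature does not vanish), since $\nbhsm$ turns across the edge.

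The missing ingredient is the nodal normality of the projection error. Because $\hat X_{h,*}^m$ comes from the closest-point projection, $\ehm(p)$ is parallel to $n^m(p)$ at every node. Combined with $|\nbhsm(p)-n^m(p)|\lesssim h^2$ (Lemma~\ref{lemma:n_bar_app}), this gives
\[
|\ehm(p)|\le(1-Ch^2)^{-1}|\ehm(p)\cdot\nbhsm(p)|\le 2|(I_h\Nbhsm\ehm)(p)|.
\]
For $x$ on an edge $K$, the convex-combination bounds $|\nbhsm(x)|\le 1$ and $|\ehm(x)|\le\max_{p\in\mathcal N(K)}|\ehm(p)|$ then yield $\|\ehm\cdot\nbhsm\|_{L^\infty(\Ghsm)}\le 2\|I_h\Nbhsm\ehm\|_{L^\infty(\Ghsm)}$, which closes the case $p=\infty$. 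If you instead read $L^\infty_h$ as the nodal maximum, i.e.\ the limit of the $L^p_h$ norms, your argument is complete as written.
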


Following the argument in \cite[Section 3.6]{BL2025}, we obtain the following Poincar\'e inequalities.
\begin{lemma}[The Poincar\'e inequality]
	\label{lemma:poincare}
	For sufficiently small $h$, the following Poincar\'e inequalities hold:
	\begin{align}
		\int_\Ghsm | v |^2 &\lesssim \int_\Ghsm | I_h\Nbhsm v |^2 + \int_\Ghsm | \nabla_\Ghsm v |^2
		\quad\forall\, v\in H^1(\Ghsm)^2 ,
		\notag
		\\
		\int_\Ghsm | v |^2 &\lesssim \int_\Ghsm | I_h \Tbhsm v |^2 + \int_\Ghsm | \nabla_\Ghsm v |^2
		\quad\forall\, v\in H^1(\Ghsm)^2 , \notag
	\end{align}
\end{lemma}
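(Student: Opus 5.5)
The plan is to split $v$ into its mean and a mean-zero fluctuation. The gradient term controls the fluctuation. The projection term controls the mean, because the normal (respectively tangent) of a closed curve spans $\R^2$ in an integrated sense. I treat the normal inequality in detail; the tangential one is identical, with $\Nbhsm$ replaced by $\Tbhsm$ and $n^m$ by the unit tangent.

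\emph{Step 1: decomposition.} For $v\in H^1(\Ghsm)^2$, set
\[
\bar v:=|\Ghsm|^{-1}\int_\Ghsm v\in\R^2,\qquad w:=v-\bar v.
\]
The one-dimensional Poincar\'e and Sobolev inequalities on the closed polygonal curve $\Ghsm$ give
\[
\|w\|_{L^\infty(\Ghsm)}\lesssim\|\nabla_\Ghsm v\|_{L^2(\Ghsm)} .
\]
Their constants depend only on the length of $\Ghsm$. That length is bounded above and below via $\kappa_l$ and the lift to $\Gm$ (Lemma~\ref{lemma:geo-pert}).

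\emph{Step 2: the fluctuation and the triangle inequality.} Since $v$ is continuous, $I_h$ is well defined. Because $I_h\Nbhsm$ acts nodally and $\Nbhsm$ has norm at most one at each node, Step~1 gives
\[
\|I_h\Nbhsm w\|_{L^2(\Ghsm)}\lesssim\|w\|_{L^\infty(\Ghsm)}\lesssim\|\nabla_\Ghsm v\|_{L^2(\Ghsm)} .
\]
By linearity,
\[
\|I_h\Nbhsm\bar v\|_{L^2}\le\|I_h\Nbhsm v\|_{L^2}+C\|\nabla_\Ghsm v\|_{L^2} .
\]

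\emph{Step 3: coercivity on constants (main obstacle).} It remains to show $\|I_h\Nbhsm c\|_{L^2(\Ghsm)}^2\ge\lambda|c|^2$ for all $c\in\R^2$, with $\lambda>0$ independent of $h$ and $m$. I would proceed in three comparisons.
\begin{itemize}
\item By Lemma~\ref{lemma:n_bar_app} and \eqref{eq:nhs_bar_len0}, the normalized nodal vector $\nbhsm/|\nbhsm|$ differs from $\nsm$ by $O(h^2)$ at the nodes. Hence $\|I_h\Nbhsm c\|_{L^2}^2\ge\|I_h\Nsm c\|_{L^2}^2-Ch^2|c|^2$.
\item Lemma~\ref{lemma:Ih}, applied to the smooth field $\Nsm c$, and Lemma~\ref{lemma:geo-pert} give
\[
\|I_h\Nsm c\|_{L^2(\Ghsm)}^2=\int_{\Gm}|n^m\cdot c|^2+O(h^2)|c|^2 .
\]
\item The quadratic form $c\mapsto\int_{\Gamma(t)}|n\cdot c|^2$ is positive definite for every $t$. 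If $n\cdot c\equiv0$, then $\Gamma(t)$ lies in a straight line, which is impossible for a closed embedded curve. Its smallest eigenvalue depends continuously on $t\in[0,T]$ by the assumed smoothness of the flow. Compactness then gives a uniform $\lambda_0>0$.
\end{itemize}
For $h$ small this yields $\lambda=\lambda_0/2$. The care here is to keep every constant independent of $m$, which is exactly what the bounds $\kappa_l$ and the $O(h^2)$ geometric perturbation estimates provide.

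\emph{Step 4: conclusion.} Combining Steps 2 and 3,
\[
|\bar v|\lesssim\|I_h\Nbhsm v\|_{L^2}+\|\nabla_\Ghsm v\|_{L^2} .
\]
Then $\|v\|_{L^2}\le|\Ghsm|^{1/2}|\bar v|+\|w\|_{L^2}$ gives the first inequality. For the second, Step~3 uses the form $\int_{\Gm}|T^mc|^2=\int_{\Gm}|\mu\cdot c|^2$, with $\mu$ the unit tangent. This form is positive definite for the same reason: a closed curve cannot have a constant tangent direction orthogonal to a fixed vector.
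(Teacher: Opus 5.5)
Your proof is correct. The paper gives no argument of its own for this lemma; it only says the inequalities follow the argument of \cite[Section~3.6]{BL2025}. Your write-up is therefore a self-contained replacement for that citation rather than a reproduction of a proof in the text.

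The mean/fluctuation split organizes the argument cleanly. The fluctuation $w=v-\bar v$ is handled entirely by the one-dimensional Poincar\'e--Sobolev bound on a closed curve together with the nodal contraction $|\Nbhsm(p)w(p)|\le|w(p)|$. The only nontrivial point then reduces to coercivity of the two-dimensional quadratic form $c\mapsto\|I_h\Nbhsm c\|_{L^2(\Ghsm)}^2$.

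An alternative is to prove the continuous inequality on $\Gm$ (for instance by contradiction and Rellich compactness) and then perturb it to $\Ghsm$ through the lift. Compared with that route, yours makes the uniformity in $m$ explicit: the constant is governed by $\min_{t\in[0,T]}\lambda_{\min}\bigl(\int_{\Gamma(t)}n\otimes n\bigr)$, or the analogous tangent form. No compactness over families of curves is needed, only continuity in $t$ of the exact flow.

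Your argument also shows that the lemma does not need the super-approximation of Lemma~\ref{lemma:n_bar_app}. Step~3 only requires $\|I_h(\Nbhsm-\Nsm)c\|_{L^2(\Ghsm)}=o(1)|c|$, so the first-order bounds \eqref{normal-intpl} and \eqref{eq:averaged-normal-approximation} would already suffice.

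One minor point concerns Step~1. A mean-zero vector field need not vanish at a single point, so apply the ``vanishes somewhere'' argument componentwise. This is harmless.
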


Applying these estimates with $v$ equal to $I_h\Nsm \ehm$, $I_h\Tsm \ehm$, $I_h\Nbhsm \ehm$ and $I_h\Tbhsm \ehm$, respectively, and using Lemmas~\ref{lemma:norm-equiv} and~\ref{lemma:super_conv2} to absorb the resulting perturbation terms, we obtain
\begin{align}
	&\int_\Ghsm | I_h\Nsm \ehm |^2 \lesssim  \int_\Ghsm | \nabla_\Ghsm I_h\Nsm \ehm |^2 ,
	\qquad
	\int_\Ghsm | I_h\Tsm \ehm |^2 \lesssim  \int_\Ghsm | \nabla_\Ghsm I_h\Tsm \ehm |^2 ,
	\notag\\
	&\int_\Ghsm | I_h\Nbhsm \ehm |^2 \lesssim  \int_\Ghsm | \nabla_\Ghsm I_h\Nbhsm \ehm |^2 ,
	\qquad
	\int_\Ghsm | I_h\Tbhsm \ehm |^2 \lesssim  \int_\Ghsm | \nabla_\Ghsm I_h\Tbhsm \ehm |^2 .
	\notag
\end{align}

\section*{Acknowledgments}
The author thanks Yupei Xie for discussions on the two-parameter homotopy argument.

\renewcommand{\refname}{\bf References}

\bibliographystyle{abbrv}
\bibliography{MCF}

\end{document}